\newtheorem{theorem}{Theorem}[section]
\newtheorem{lemma}[theorem]{Lemma}
\newtheorem{proposition}[theorem]{Proposition}
\newtheorem{corollary}[theorem]{Corollary}
\theoremstyle{definition}
\newtheorem{definition}[theorem]{Definition}
\newtheorem{example}[theorem]{Example}
\newtheorem{notation}[theorem]{Notation}
\theoremstyle{remark}
\numberwithin{equation}{section}
\newcommand\style{\mathcal }          %%% calligraphic
\newcommand{\B}{\style{B}}
\newcommand{\M}{\style{M}}
\newcommand\A{{\style A}}
\renewcommand{\H}{\style{H}}
\newcommand\osi{{\style I}}
\newcommand\osq{{\style Q}}
\newcommand\osr{{\style R}}
\newcommand\oss{{\style S}}
\newcommand\ost{{\style T}}
\newcommand\omin{\otimes_{\rm min}}
\newcommand\omax{\otimes_{\rm max}}
\newcommand\oc{\otimes_{\rm c}}
\newcommand\cstar{{\rm C}^*}                              %%% C$^*$-algebra generated by
\newcommand\cstare{{\rm C}_{\rm e}^*}              %%% C$^*$-envelope of
\begin{document}

\title[Categorical Relations and Tensor Cones ]{Categorical Relations and 
Bipartite Entanglement in Tensor Cones for Toeplitz and Fej\'er-Riesz Operator Systems}

\author{Douglas Farenick}
\address{Department of Mathematics and Statistics, University of Regina, Regina, Saskatchewan S4S 0A2, Canada}
\curraddr{}
\email{douglas.farenick@uregina.ca}
\thanks{Supported in part by the NSERC Discovery Grant program}
 
\subjclass[2020]{46L07, 47L07}

 %%%%%%%%%%%%%%%%%%%%%
\begin{abstract}
The present paper aims to understand separability and entanglement in tensor cones, in the sense of 
Namioka and Phelps \cite{namioka--phelps1969}, that arise from the base cones of operator system tensor 
products \cite{kavruk--paulsen--todorov--tomforde2011}.
Of particular interest here are the Toeplitz and Fej\'er-Riesz operator systems, which are, respectively,
operator systems of Toeplitz matrices and Laurent polynomials of bounded degree (that is,
trigonometric polynomials), and which are related in the operator system category through duality.  
Some additional categorical relationships established in this paper for Toeplitz and Fej\'er-Riesz operator systems.
Of independent interest is 
a single matrix criterion, similar to the criterion involving
the Choi matrix \cite{choi1972}, for a linear map of the Fej\'er-Riesz operator system to be completely positive.
\end{abstract}

\maketitle

%%%%%%%%%%%%%%%%%%%%%%%%% Text of the Paper
 
 \section{Introduction}
 
  %%%%%%%%%%%%%%%%%%
 \subsection{Tensor cones and the category $\mathfrak S_1$}
 A cone $C$ in a finite-dimensional topological real vector space $V$ (that is, a subset $C\subseteq V$ 
such that $\alpha x + \beta y\in C$, for all nonnegative
 $\alpha,\beta\in\mathbb R$ and all $x,y\in C$)
 is \emph{proper} if it is closed, generating ($C-C=V$), and pointed ($C\cap(-C)=\{0\}$).

 The dual cone $C^d$ of a cone $C$ in $V$ is the subset
 \[
 C^d=\left\{\varphi\in V^d\,|\,\varphi(x)\geq0, \,\forall x\in C\right\}
 \]
 of the dual space $V^d$ of $V$. If $C$ is a proper cone, then so is $C^d$. Moreover,
 in identifying the bidual $V^{dd}$ of $V$ with $V$, the bipolar theorem
 yields the identification of $C^{dd}$ with $C$. A proper cone $C$ of a finite-dimensional 
 topological real vector space $V$ induces a partial order on $V$ in which
 $x\leq y$ (or $y\geq x$), for $x,y\in V$, is understood to denote $y-x\in C$. Thus, $C$ coincides with those $x\in V$ for which $x\geq 0$;
 such elements are said to be \emph{positive}.
 
 If $C_1$ and $C_2$ are cones in finite-dimensional real topological vector spaces $V_1$ and $V_2$, then the
 \emph{$(C_1,C_2)$-separability cone} in $V_1\otimes V_2$, which here will simply be called the \emph{separability cone}, 
 is defined to be the set 
 \[
 C_1\otimes_{\rm sep} C_2=\left\{\sum_{j=1}^k a_j\otimes b_j\,|\,k\in\mathbb N, \, a_j\in C_1,\,b_j\in C_2\right\}.
 \]
 The set $C_1\otimes_{\rm sep} C_2$ is a proper cone in $V_1\otimes V_2$, if each $C_i$ is a proper cone on $V_i$.
 The \emph{$(C_1,C_2)$-dual separable cone} in
 $V_1\otimes V_2$, or simply the \emph{dual separable cone}, is the set
 \[
 \begin{array}{rcl}
 C_1\otimes_{{\rm sep}^*} C_2 &=&\left(C_1^d\otimes_{\rm sep} C_2^d\right)^d \\ && \\
 &=&\left\{\xi \in V_1\otimes V_2\,|\,(\varphi_1\otimes \varphi_2)(\xi)\geq0,\,\forall\,\varphi_i\in C_i^d\right\},
 \end{array}
 \]
 and it is a proper cone if $C_1$ and $C_2$ are proper cones. 
 
 It is straightforward to see that $C_1\otimes_{\rm sep} C_2\subseteq 
 C_1\otimes_{{\rm sep}^*} C_2$. Therefore, a fundamental question is, ``Which cones $C_1$ and $C_2$
 satisfy  $C_1\otimes_{\rm sep} C_2= C_1\otimes_{{\rm sep}^*} C_2$?'' This long-standing question 
 stemming from \cite{namioka--phelps1969} has been answered only quite
 recently by Aubrun, Ludovico, Palazuelos, and Pl\'avala
  \cite{aubrun--ludovico--palazuelos2021}, where it is proved that equality holds if and only one of the cones $C_i$ is a simplicial cone.
 (A proper cone $C$ in a finite-dimensional vector space
 $V$ is said to be \emph{simplicial} if $V$ has a linear basis $B$ in which $C$ is the cone generated by $B$.)
 
 \begin{definition} A cone $C\subset V_1\otimes V_2$ is a \emph{tensor cone} for
$C_1$ and $C_2$ if
\[
C_1\otimes_{\rm sep} C_2 \subseteq C \subseteq C_1\otimes_{{\rm sep}^*} C_2.
\]
Elements of $C_1\otimes_{\rm sep} C_2$ are said be \emph{separable}; elements of
 $C\setminus\left(C_1\otimes_{\rm sep} C_2\right)$ are said to be
 \emph{entangled}.
\end{definition}

Thus, in cases where neither $C_1$ nor $C_2$ is classical, one of the main tasks in the analysis of a given tensor
cone $C$ for proper cones $C_1$ and $C_2$ is to discern which elements of $C$ are separable and which are entangled.
Analyses of this type have been carried out, for example, in
the recent works \cite{aubrun--muller-hermes2023,bluhm--jencova--nechita2022}, 
providing a mathematical framework for general physical theories other than the purely classical or quantum settings.
In the present paper, 
the tensor cones under study are those that arise as the base positive cone of an operator system tensor product.

The terminology and notation for $C_1\otimes_{\rm sep} C_2$ and $C_1\otimes_{{\rm sep}^*} C_2$ above
 differs from some of the standardly used terminology and notation, namely ``min'' and ``max'' and $\omin$ and $\omax$
 \cite{aubrun--ludovico--palazuelos2021,aubrun--muller-hermes2023,bluhm--jencova--nechita2022,fritz2012b}. The reason
 for these differences is because the symbols  $\omin$ and $\omax$ and terms ``min'' and ``max''
 will be reserved in the present paper to reference the minimal and maximal operator system tensor 
 products \cite{kavruk--paulsen--todorov--tomforde2011}.

%%%%%%%%%%%%%%%
\subsection{Operator systems}

Formally, an
\emph{operator system} is a triple $(\osr, \{\mathcal C_n\}_{n\in\mathbb N}, e_\osr)$ consisting of a complex $*$-vector
space $\osr$, a family $\{\mathcal C_n\}_{n\in\mathbb N}$ of proper cones in the real vector spaces $\M_n(\osr)_{\rm sa}$
satisfying $\alpha^*\mathcal C_n \alpha\subseteq \mathcal C_m$, for all $n,m\in\mathbb N$ and $n\times m$ complex matrices $\alpha$,
and a distinguished element $e_\osr\in \mathcal C_1$ that serves as an Archimedean order unit for the 
family $\{\mathcal C_n\}_{n\in\mathbb N}$ \cite{choi--effros1977,Paulsen-book}. 
Here, the notation $\mathcal V_{\rm sa}$, for a complex $*$-vector space $\mathcal V$, 
means the real vector space $\mathcal V_{\rm sa}=\{v\in\mathcal V\,|\,v^*=v\}$ of self adjoint elements. (With matrices over
$\osr$, the induced adjoint operation is $([x_{ij}]_{i,j=1}^n)^*=[x_{ji}^*]_{i,j=1}^n$.)
It is common to dispense with the triple notation, and simply
refer to $\osr$ as an operator system and denote each proper cone $\mathcal C_n$ by $\M_n(\osr)_+$. The cone $\mathcal C_1$
is called the \emph{base (positive) cone} of $\osr$, and is denoted by $\osr_+$, while the \emph{matrix cones} $\mathcal C_n$ are denoted
by $\M_n(\osr)_+$, for each $n$.

Linear transformations $\phi:\osr\rightarrow\ost$ of operator systems are said to be completely positive if their ampliations
$\phi^{(n)}:\M_n(\osr)\rightarrow\M_n(\ost)$, which are
defined by $\phi^{(n)}([x_{ij}]_{i,j=1}^n)=[\phi(x_ij)]_{i,j=1}^n$,
map $\M_n(\osr)_+$ in $\M_n(\ost)_+$, for each $n$, and these maps are unital if $\phi(e_\osr)=e_\ost$.

By $\mathfrak S_1$ we denote the category whose 
objects are operator systems and morphisms are unital completely positive (ucp) linear maps. 
Two operator systems, $\osr$ and $\ost$, are isomorphic in the category $\mathfrak S_1$ if there is linear bijection
$\phi:\osr\rightarrow\ost$ such that both $\phi$ and $\phi^{-1}$ are ucp maps; such bijections
$\phi$ are called unital complete order isomorphisms and the notation $\osr\simeq\ost$ is used to
denote that $\osr$ and $\ost$ are isomorphic in $\mathfrak S_1$. Unital C$^*$-algebras are among the objects in $\mathfrak S_1$,
and the theory of operator systems %\cite{choi--effros1977,Paulsen-book}
is, in general, concerned with unital $*$-closed subspaces of 
unital C$^*$-algebras, as the Choi-Effros Embedding Theorem \cite{choi--effros1977}
shows that for every operator system $\osr$ there
are a Hilbert space $\H$ and an operator subsystem $\ost$
of the algebra $\B(\H)$ of bounded linear operators on $\H$
such that $\osr\simeq\ost$.

The category $\mathfrak S_1$ admits a variety of constructions, including quotients and tensor products.
With respect to tensor products,
there are maximal and minimal operator system tensor product structures, 
whereby the maximal tensor product has the smallest family of matricial
cones, while the minimal tensor product has the largest family of positive cones
\cite{kavruk--paulsen--todorov--tomforde2011}. 
Matricial cones are used to norm elements of operator systems; 
thus, small cones lead to large [e.g., ``max''] norms, while large cones lead to small [e.g., ``min''] norms.
 
 %%%%%%%%%%%%%%%%%%%%%%%%%%%%%
 \subsection{Real versus complex vector spaces}
 
 Operator systems are complex $*$-vector spaces, but the theory of tensor cones is framed in terms of real vector spaces; therefore, 
 it is important to be clear about the definitions of the structures involved.
 
 The dual space $\osr^d$ of a finite-dimensional operator system
 $\osr$ is also an operator system \cite{choi--effros1977}, and so one can consider operator system tensor products of the form 
 $\osr^d\otimes_\sigma\ost$ for finite-dimensional operator systems
 $\osr$ and arbitrary operator systems $\ost$. At the base level, the base cone $(\osr^d)_+$ is given by all 
 linear functionals $\phi:\osr\rightarrow\mathbb C$ such that $\phi(y)\geq0$, for all $y\in\osr_+$.
 
 Consider now both the complex $*$-vector space $\osr$ and the real vector space $\osr_{\rm sa}$. If
 $\phi:\osr\rightarrow\mathbb C$ is a linear functional such that $\phi(y)\geq0$, for all $y\in\osr_+$, then necessarily
 $\phi(x^*)=\overline{\phi(x)}$, for every $x\in\osr$; thus, $\phi$ is also a linear functional $\osr_{\rm sa}\rightarrow\mathbb R$
 and is an element of the dual of $\osr_+$ when considered as a proper cone in $\osr_{\rm sa}$. Conversely,
 if $\vartheta:\osr_{\rm sa}\rightarrow\mathbb R$ is a linear functional for which $\vartheta(x)\geq0$
 for every $x\in\osr_+$, then the linear map $\phi:\osr\rightarrow\mathbb C$ defined by
 $\phi(x+iy)=\vartheta(x)+i\vartheta(y)$, for $x,y\in\osr_{\rm sa}$, defines a complex linear functional
 for which $\phi(x)\geq0$ when $x\in\osr_+$. Thus, the dual of the cone $\osr_+$,
 when considered as a proper cone in $\osr_{\rm sa}$, is canonically identified with the base cone $(\osr^d)_+$ of the
 dual operator system $\osr^d$.

 Using the notation $\otimes_{\mathbb R}$ and $\otimes_{\mathbb C}$ to distinguish the tensor
 product operations in the categories of real and complex vector spaces, respectively, one needs to understand 
 how $\mathcal V_{\rm sa} \otimes_{\mathbb R} \mathcal W_{\rm sa} $ relates to 
 $\left(\mathcal V\otimes_{\mathbb C} \mathcal W \right)_{\rm sa}$ when $\mathcal V$ and $\mathcal W$ are
complex $*$-vector spaces. In general,
$\mathcal V_{\rm sa} \otimes_{\mathbb R} \mathcal W_{\rm sa} \subseteq
\left(\mathcal V\otimes_{\mathbb C} \mathcal W \right)_{\rm sa}$; however, if
$\mathcal W$ is the algebra $\M_n(\mathbb C)$ of complex $n\times n$ matrices, 
then
$\mathcal V_{\rm sa} \otimes_{\mathbb R} \mathcal W_{\rm sa} =\left(\mathcal V\otimes_{\mathbb C} \mathcal W \right)_{\rm sa}$
\cite[Lemma 3.7]{paulsen--todorov--tomforde2010}.

Returning to the case of finite-dimensional operator systems, $\osr$ and $\ost$, 
there are two possible field-dependent
definitions of 
$\osr_+\otimes_{{\rm sep}^*}\ost_+$, and we distinguish them below by using a superscript to denote the base field:
\[
\osr_+\otimes_{{\rm sep}^*}^{\mathbb R}\ost_+
=
\left\{ x\in \osr_{\rm sa} \otimes_{\mathbb R} \ost_{\rm sa}\,|\, (\phi\otimes\psi)[x]\geq0,\mbox{ for all }\phi\in(\osr_{\rm sa}^d)_+,\,
\psi\in(\ost_{\rm sa}^d)_+\right\},
\]
and
\[
\osr_+\otimes_{{\rm sep}^*}^{\mathbb C}\ost_+
=
\left\{ x\in \left(\osr  \otimes_{\mathbb C} \ost\right)_{\rm sa}\,|\, (\phi\otimes\psi)[x]\geq0,\mbox{ for all }\phi\in(\osr^d)_+,\,
\psi\in(\ost ^d)_+\right\}.
\]
As explained above, $(\osr_{\rm sa}^d)_+$ and $(\osr^d)_+$ are canonically identified, and so
\[
\osr_+\otimes_{{\rm sep}^*}^{\mathbb R}\ost_+
\subseteq
\osr_+\otimes_{{\rm sep}^*}^{\mathbb C}\ost_+
\]
in general.

Therefore, to accommodate our interest in complex $*$-vector spaces,
the following notation shall be used henceforth.

\begin{notation}\label{notation cmplx} If $\osr$ and $\ost$ are finite-dimensional operator systems, then 
\[
\osr_+\otimes_{{\rm sep}^*}\ost_+
=
\left\{ x\in \left(\osr  \otimes_{\mathbb C} \ost\right)_{\rm sa}\,|\, (\phi\otimes\psi)[x]\geq0,\mbox{ for all }\phi\in(\osr^d)_+,\,
\psi\in(\ost ^d)_+\right\}.
\]
That is, $\osr_+\otimes_{{\rm sep}^*}\ost_+$ denotes $\osr_+\otimes_{{\rm sep}^*}^{\mathbb C}\ost_+$.
\end{notation}

With the notation above, the main result of \cite{aubrun--ludovico--palazuelos2021} still holds.

\begin{theorem}\label{equality}
If $\osr$ and $\ost$ are finite-dimensional operator systems, then 
\[
\osr_+\otimes_{{\rm sep}}\ost_+
=
\osr_+\otimes_{{\rm sep}^*}\ost_+
\]
if and only if one of the cones $\osr_+$ or $\ost_+$ is simplicial.
\end{theorem}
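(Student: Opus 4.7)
The plan is to reduce Theorem \ref{equality} to its real vector-space analogue, established by Aubrun, Ludovico, Palazuelos, and Pl\'avala in \cite{aubrun--ludovico--palazuelos2021}, by showing that under Notation \ref{notation cmplx} the complex version of the dual separable cone coincides with the real version, so that both sides of the proposed equivalence match those in the original theorem.

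First, I would verify the identification of underlying real vector spaces $(\osr \otimes_{\mathbb C} \ost)_{\rm sa} = \osr_{\rm sa} \otimes_{\mathbb R} \ost_{\rm sa}$ in finite dimensions. Selecting an $\mathbb R$-basis $\{e_i\}$ of $\osr_{\rm sa}$ and an $\mathbb R$-basis $\{f_j\}$ of $\ost_{\rm sa}$, these serve automatically as $\mathbb C$-bases of $\osr$ and $\ost$, so $\{e_i \otimes f_j\}$ is simultaneously an $\mathbb R$-basis for $\osr_{\rm sa} \otimes_{\mathbb R} \ost_{\rm sa}$ and a $\mathbb C$-basis for $\osr \otimes_{\mathbb C} \ost$. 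The self-adjointness of an element $\sum_{i,j} \alpha_{ij}(e_i \otimes f_j)$ in the complex tensor product is then equivalent to $\alpha_{ij} \in \mathbb R$ for every $i, j$, yielding the claimed equality of real vector spaces.

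Next, I would combine this identification with the canonical correspondence $\phi \leftrightarrow \vartheta$ between $(\osr^d)_+$ and $(\osr_{\rm sa}^d)_+$ explained in the excerpt immediately before Notation \ref{notation cmplx} to show that $\osr_+ \otimes_{{\rm sep}^*}^{\mathbb R} \ost_+ = \osr_+ \otimes_{{\rm sep}^*}^{\mathbb C} \ost_+$. Indeed, on any real-coefficient tensor $\xi = \sum_{i,j} \alpha_{ij}\, e_i \otimes f_j$, a direct calculation gives $(\phi_1 \otimes \phi_2)(\xi) = (\vartheta_1 \otimes \vartheta_2)(\xi)$, so the two positivity conditions defining these dual cones coincide. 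Since the separability cone $\osr_+ \otimes_{\rm sep} \ost_+$ is manifestly the same set in either category---its generators are tensors $a \otimes b$ with $a \in \osr_+$ and $b \in \ost_+$, already lying in $\osr_{\rm sa} \otimes_{\mathbb R} \ost_{\rm sa}$---the equivalence claimed in Theorem \ref{equality} reduces to the corresponding statement for proper cones in real finite-dimensional vector spaces, to which \cite{aubrun--ludovico--palazuelos2021} applies verbatim.

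The main obstacle is largely organizational: tracking the identifications carefully and checking that they respect both the cone structures and the evaluation of tensor products of functionals across the two base fields. Once these bookkeeping steps are in place, no new conceptual ingredients beyond the result of Aubrun, Ludovico, Palazuelos, and Pl\'avala are required, and the conclusion follows immediately.
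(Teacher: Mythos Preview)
Your argument is correct, and in fact it establishes slightly more than the paper's proof does. The key difference is this: the paper treats the inclusion $\osr_{\rm sa}\otimes_{\mathbb R}\ost_{\rm sa}\subseteq(\osr\otimes_{\mathbb C}\ost)_{\rm sa}$ as potentially strict in general and handles the two implications asymmetrically. For the forward direction it only uses the inclusion $\osr_+\otimes_{{\rm sep}^*}^{\mathbb R}\ost_+\subseteq\osr_+\otimes_{{\rm sep}^*}^{\mathbb C}\ost_+$ to pass from the complex equality down to the real one and then invokes \cite{aubrun--ludovico--palazuelos2021}; for the converse it uses the simplicial basis of $\osr_{\rm sa}$ to verify $(\osr\otimes_{\mathbb C}\ost)_{\rm sa}=\osr_{\rm sa}\otimes_{\mathbb R}\ost_{\rm sa}$ in that special case, and only then applies \cite{aubrun--ludovico--palazuelos2021}. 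You instead observe that this identification of self-adjoint parts holds \emph{unconditionally} for finite-dimensional operator systems, by the basis argument you describe, so that $\osr_+\otimes_{{\rm sep}^*}^{\mathbb R}\ost_+=\osr_+\otimes_{{\rm sep}^*}^{\mathbb C}\ost_+$ always, and both directions reduce simultaneously to the real theorem. Your route is more uniform and yields the bonus conclusion that Notation~\ref{notation cmplx} does not in fact enlarge the dual separable cone beyond its real counterpart; the paper's route avoids stating this general fact but needs a separate argument for each implication.
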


\begin{proof}
If $\osr_+\otimes_{{\rm sep}}\ost_+=\osr_+\otimes_{{\rm sep}^*}\ost_+$, then it is also true that
$\osr_+\otimes_{{\rm sep}}\ost_+=\osr_+\otimes_{{\rm sep}^*}^{\mathbb R}\ost_+$; hence, 
by the main result of \cite{aubrun--ludovico--palazuelos2021}, one of 
$\osr_+$ or $\ost_+$ is simplicial.

Conversely, assume $\osr_+$ is simplicial. Thus, $\osr_{\rm sa}$ has a linear basis $\{e_1,\dots,e_d\}$ consisting
of positive $e_j$ such that $\osr_+$ is the cone generated by $\{e_1,\dots,e_d\}$. Hence, each $x\in \osr\otimes_{\mathbb C}\ost$
has the form $x=\displaystyle\sum_{j=1}^d e_j\otimes t_j$, for some $t_j\in\ost$. If, further, $x^*=x$, then $t_j^*=t_j$ by the
linear independence of the $e_j$, which yields $x\in \osr_{\rm sa}\otimes_{\mathbb R}\ost_{\rm sa}$. In other words,
$\osr_{\rm sa}\otimes_{\mathbb R}\ost_{\rm sa}=(\osr\otimes_{\mathbb C}\ost)_{\rm sa}$. Hence, by the main result of 
\cite{aubrun--ludovico--palazuelos2021}, the equality 
\[
\osr_+\otimes_{{\rm sep}}\ost_+=\osr_+\otimes_{{\rm sep}^*}^{\mathbb R}\ost_+
\]
holds, which yields $\osr_+\otimes_{{\rm sep}}\ost_+
=
\osr_+\otimes_{{\rm sep}^*}\ost_+$ because 
$\osr_{\rm sa}\otimes_{\mathbb R}\ost_{\rm sa}=(\osr\otimes_{\mathbb C}\ost)_{\rm sa}$.
\end{proof}

With Notation \ref{notation cmplx} in mind, the following example 
is one of the most important convex cones in quantum theory, consisting of what are called
\emph{block-positive matrices}, which are employed as entanglement witnesses.

\begin{example}[Block-positive matrices] For every $n\in\mathbb N$ and finite-dimensional operator system $\ost$,
\[
\ost_+\otimes_{{\rm sep}^*}\M_n(\mathbb C)_+=\left\{[x_{ij}]_{i,j=1}^n\in \M_n(\ost)_{\rm sa}\,|\,
\sum_{i,j=1}^n \alpha_i\overline\alpha_j x_{ij}\in \ost_+, \mbox{ for all }\alpha_j\in\mathbb C\right\}.
\]
\end{example}

\begin{proof} The assertion is Theorem 3.2 of \cite{paulsen--todorov--tomforde2010}.
\end{proof}

%%%%%%%%%%%%%%%%%%%%%%%%%%%%%%%%
\subsection{Tensor cones with operator systems}

As will be explained in Proposition \ref{tc}, and
using Notation \ref{notation cmplx},
if both $\osr$ and $\ost$ are finite-dimensional operator systems, 
then 
\begin{equation}\label{e:incl}
\osr_+ \otimes_{\rm sep} \ost_+ \subseteq \left(\osr\omax\ost\right)_+ \subseteq \left(\osr\otimes_\sigma\ost\right)_+
\subseteq \left(\osr\omin\ost\right)_+ 
\subseteq \osr_+ \otimes_{{\rm sep}^*} \ost_+.
\end{equation}
In other words, the base cone $(\osr\otimes_\sigma\ost)_+$ is a tensor cone for $\osr_+$ and $\ost_+$.

In light of the main result of \cite{aubrun--ludovico--palazuelos2021} (in the formulation of Theorem \ref{equality}), 
equality across \eqref{e:incl} will occur only
if one of $\osr_+$ or $\ost_+$ is a simplicial cone. However, it is interesting to have a better
understanding of the other inclusions in \eqref{e:incl}, and the first purpose of the present paper is to address this
when $\osr$ and $\ost$ are drawn from Toeplitz and Fej\'er-Riesz operator systems.

 %%%%%%%%%%%%%%%%%%%%%%%%%%%%%
 \subsection{Toeplitz and Fej\'er-Riesz operator systems}

 A \emph{Toeplitz operator system} is an operator 
 system of $n\times n$ complex matrices, for some $n\geq 2$, in which the matrices are
 of Toeplitz form \cite{toeplitz1911}:
\begin{equation}\label{tms-intro}
 \left[ \begin{array}{ccccc} \alpha_0 & \alpha_{-1}  & \alpha_{-2} & \dots & \alpha_{-n+1}  \\
\alpha_1 & \alpha_0 & \alpha_{-1} &\ddots & \vdots \\ 
\alpha_2 & \alpha_1 &   \alpha_0  &\ddots & \alpha_{-2} \\ 
\vdots & \ddots & \ddots & \ddots & \alpha_{-1} \\
\alpha_{n-1} & \dots & \alpha_2 &\alpha_1 & \alpha_0 
\end{array}
\right],
\end{equation}
for some $\alpha_\ell\in\mathbb C$.
The linear space $C(S^1)^{(n)}$ of all such $n\times n$ complex matrices
forms an operator subsystem, denoted by $C(S^1)^{(n)}$, of the unital
C$^*$-algebra $\M_n(\mathbb C)$ of $n\times n$ complex matrices, where the $n\times n$. Thus, a Toeplitz matrix $x$
is positive if it is positive as linear operator on the Hilbert space $\mathbb C^n$, 
and the
identity matrix $1_n$ serves as the canonical Archimedean order unit for $\M_n(\mathbb C)$ and, hence,
$C(S^1)^{(n)}$. At the matrix level, $x\in \M_p(C(S^1)^{(n)})$ is positive if $x$ is positive as an operator on the
Hilbert space $\mathbb C^n\otimes\mathbb C^p\cong \displaystyle\bigoplus_1^p \mathbb C^n$.

A \emph{Fej\'er-Riesz operator system} is an operator subsystem of the unital abelian 
C$^*$-algebra $C(S^1)$ of continuous functions 
 $f:S^1\rightarrow\mathbb C$ on the unit circle $S^1\subset\mathbb C$. If $f\in C(S^1)$, then the Fourier coefficients  
$\hat f(k)$ of $f$ are given by
 \[
 \hat f(k)=\frac{1}{2\pi}\int_0^{2\pi} f(e^{i\theta})e^{-i k\theta}\,d\theta,
 \]
for every $k\in\mathbb Z$. If $n\geq2$ is fixed, then Fej\'er-Riesz operator system
$C(S^1)_{(n)}$ is defined to be the set of those $f\in C(S^1)$ for which $\hat f(k)=0$, 
for all $k\in\mathbb Z$ with $|k|\geq n$. 
Thus, each  $f\in C(S^1)_{(n)}$ has the form
 \[
 f(z)=\sum_{\ell=-n+1}^{n-1} \alpha_\ell z^\ell,
 \]
 for some $\alpha_\ell\in\mathbb C$. 
 The positive elements of $C(S^1)$ are those continuous functions for which $f(z)\geq0$, for every $z\in S^1$. At the matrix
 level, an element $F\in\M_p\left( C(S^1)_{(n)}\right)$ is positive if, as a matrix-valued function $F:S^1\rightarrow\M_p(\mathbb C)$,
 $F(z)\in \M_p(\mathbb C)_+$, for every $z\in S^1$.
 The constant function $\chi_0$, given by $\chi_0(z)=1$ for all $z\in S^1$, serves as
 the canonical Archimedean order unit for both $C(S^1)$ and $C(S^1)_{(n)}$.
 The term ``Fej\'er-Riesz operator system'' is used because of the importance 
 of the Fej\'er-Riesz factorisation theorem \cite{fejer2015} in
 the study and application of trigonometric polynomials with nonnegative values.

Even though Toeplitz and Fej\'er-Riesz operator systems have been studied intensely since their introduction in 
classical works devoted quadratic forms and Fourier series, 
the following categorical relationship between these operator 
systems is rather recent \cite{connes-vansuijlekom2021,farenick2021}.

\begin{theorem}[Duality]\label{toeplitz duality}
The  linear map $\delta:C(S^1)^{(n)}\rightarrow\left(C(S^1)_{(n)}\right)^d$ that sends a Toeplitz matrix
$T=[\tau_{k-\ell }]_{k,\ell=0}^{n-1}\in C(S^1)^{(n)}$ to the 
linear functional $\varphi_T:C(S^1)_{(n)} \rightarrow\mathbb C$ defined by
\begin{equation}\label{lf defn}
\varphi_T(f)=\sum_{k=-n+1}^{n-1}\tau_{-k}\hat f(k),
\end{equation}
for $f\in C(S^1)_{(n)}$, is a unital complete order isomorphism. 
That is, 
\[
C(S^1)^{(n)}\simeq  \left(C(S^1)_{(n)}\right)^d \,\mbox{ and }\, \left(C(S^1)^{(n)}\right)^d\simeq  C(S^1)_{(n)}
\]
in the operator system category $\mathfrak S_1$.
\end{theorem}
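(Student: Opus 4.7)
The plan is to verify that $\delta$ is a unital linear bijection and then to establish complete positivity of both $\delta$ and $\delta^{-1}$. The central analytic input is the matrix-valued Fej\'er-Riesz factorisation theorem; the remaining ingredients are standard tools from finite-dimensional operator system duality. Linearity of $\delta$ is immediate from \eqref{lf defn}, and a dimension count shows that both $C(S^1)^{(n)}$ and $C(S^1)_{(n)}$ have complex dimension $2n-1$; the latter is spanned by the monomials $\{z^k : |k|\leq n-1\}$, and evaluating $\varphi_T$ on this basis recovers the entries $\tau_{-k}$, which gives bijectivity via an explicit inverse. Because the identity matrix has $\tau_0=1$ and $\tau_k=0$ for $k\neq 0$, one computes $\varphi_{I_n}(f)=\hat f(0)=\frac{1}{2\pi}\int_0^{2\pi} f(e^{i\theta})\,d\theta$, which is the canonical faithful state on $C(S^1)_{(n)}$ and serves as the distinguished Archimedean order unit on $(C(S^1)_{(n)})^d$; hence $\delta$ is unital.

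For complete positivity of $\delta$, I would use the standard identification (for finite-dimensional operator systems) of the positive cone of $\M_p\bigl((C(S^1)_{(n)})^d\bigr)$ with the cone of completely positive linear maps $C(S^1)_{(n)}\to\M_p(\mathbb C)$. The task thus reduces to showing that, if $T=[T_{k-\ell}]_{k,\ell=0}^{n-1}$ is a positive block Toeplitz matrix in $\M_p(C(S^1)^{(n)})$, then the associated Fourier functional $\phi_T:f\mapsto\sum_{k=-n+1}^{n-1}T_{-k}\hat f(k)$ is completely positive. Given $F\in\M_q(C(S^1)_{(n)})_+$, the matrix-valued Fej\'er-Riesz theorem produces an analytic factorisation $F(z)=G(z)^*G(z)$ with $G(z)=\sum_{j=0}^{n-1}G_j z^j$ and $G_j\in\M_q(\mathbb C)$. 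A direct computation then yields $\hat F(k)=\sum_\ell G_\ell^* G_{\ell+k}$, and after re-indexing one obtains $\phi_T^{(q)}(F)=\sum_{j,m}T_{j-m}\otimes G_j^* G_m\in\M_{pq}(\mathbb C)$. Factoring $T_{j-m}\otimes G_j^* G_m=(I_p\otimes G_j)^*(T_{j-m}\otimes I_q)(I_p\otimes G_m)$, positivity of this sum reduces to $T\otimes I_q\geq 0$ applied to the block vector whose $j$-th component is $(I_p\otimes G_j)\xi$, which is immediate from the positivity of $T$.

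The reverse direction, complete positivity of $\delta^{-1}$, is handled by Arveson's extension theorem: if $\phi:C(S^1)_{(n)}\to\M_p(\mathbb C)$ is completely positive, then by injectivity of $\M_p(\mathbb C)$ there is a completely positive extension $\tilde\phi:C(S^1)\to\M_p(\mathbb C)$. By the spectral theory of commutative unital C$^*$-algebras, $\tilde\phi$ is integration against a positive $\M_p(\mathbb C)$-valued Radon measure $\mu$ on $S^1$, and the truncated moment matrix $[\hat\mu(k-\ell)]_{k,\ell=0}^{n-1}$ is automatically positive as a block Toeplitz matrix; by construction it coincides with the pre-image of $\phi$ under $\delta^{(p)}$.

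The main obstacle is the third step: passing from positivity of the block Toeplitz matrix $T$ to complete positivity of $\phi_T$ relies on the matrix-valued Fej\'er-Riesz factorisation together with the re-indexing and congruence trick that extracts block Toeplitz positivity of $T$ from the matrix structure of $F$. The other steps are either routine linear algebra or standard applications of Arveson's theorem and spectral theory.
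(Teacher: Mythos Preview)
Your argument is correct. The paper itself does not supply a proof of this theorem; it is quoted from \cite{connes-vansuijlekom2021,farenick2021} as a known result, so there is no in-paper proof to compare against line by line.

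That said, your strategy matches the one used in the cited references. The forward direction (positivity of the block Toeplitz matrix $T$ implies complete positivity of $\phi_T$) is handled exactly as you propose: factor the positive matrix trigonometric polynomial via the operator-valued Fej\'er--Riesz theorem and then recognise $\phi_T^{(q)}(F)$ as a congruence $W^*(T\otimes I_q)W$ with $W$ the block column $(I_p\otimes G_0,\ldots,I_p\otimes G_{n-1})$. Your computation $\hat F(k)=\sum_j G_j^*G_{j+k}$ and the re-indexing $k=m-j$ giving $\sum_{j,m}T_{j-m}\otimes G_j^*G_m$ are accurate. The reverse direction via Arveson extension to $C(S^1)$ and the positive operator-valued measure representation is likewise standard and correct; the block moment matrix $[\hat\mu(k-\ell)]$ is positive because it equals $\int_{S^1}T_n(z)\,d\mu(z)$, an integral of positive rank-one block Toeplitz matrices against a positive measure.

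One small remark: in the unitality step you should say explicitly why $\varphi_{I_n}$ is \emph{faithful} on $(C(S^1)_{(n)})_+$ (so that it serves as an Archimedean order unit for the dual). This follows because a nonnegative trigonometric polynomial with zero mean is identically zero; you have the ingredients, but it is worth stating.
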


%%%%%%%%%%%%%%
\subsection{Statement of the main results}

%%%%%%%%%
\subsubsection{Base cones of operator system tensor products are tensor cones}

 \begin{theorem} If $\otimes_\sigma$ is an operator system tensor product structure on finite-dimensional
 operator systems $\osr$ and $\ost$, then the base positive cone $(\osr\otimes_\sigma\ost)_+$ of the
 operator system $\osr\otimes_\sigma\ost$ satisfies the inclusions
 \[
 \osr_+\otimes_{\rm sep}\ost_+ \subseteq (\osr\otimes_\sigma\ost)_+ \subseteq \osr_+ \otimes_{{\rm sep}^*} \ost_+.
 \]
 In other words, $(\osr\otimes_\sigma\ost)_+$ is a tensor cone for $\osr_+$ and $\ost_+$.
 \end{theorem}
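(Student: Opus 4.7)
The plan is to invoke the foundational result from \cite{kavruk--paulsen--todorov--tomforde2011} that every operator system tensor product $\otimes_\sigma$ lies between the minimal and maximal tensor product structures. At the base level, this sandwich says
\[
(\osr\omax\ost)_+ \;\subseteq\; (\osr\otimes_\sigma\ost)_+ \;\subseteq\; (\osr\omin\ost)_+,
\]
so the proof reduces to establishing the two outer inclusions
\[
\osr_+\otimes_{\rm sep}\ost_+ \;\subseteq\; (\osr\omax\ost)_+
\qquad\text{and}\qquad
(\osr\omin\ost)_+ \;\subseteq\; \osr_+\otimes_{{\rm sep}^*}\ost_+.
\]

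For the first inclusion, I would appeal directly to the defining cone of the maximal tensor product. For $a \in \osr_+$ and $b \in \ost_+$, the elementary tensor $a\otimes b$ has the form $\alpha(P\otimes Q)\alpha^*$ with $P = a$, $Q = b$, and $\alpha = 1$, and so lies in $(\osr\omax\ost)_+$. Because $(\osr\omax\ost)_+$ is closed under addition, every finite sum $\sum_j a_j\otimes b_j$ with $a_j\in\osr_+$ and $b_j\in\ost_+$ also belongs to $(\osr\omax\ost)_+$, which is precisely the containment $\osr_+\otimes_{\rm sep}\ost_+ \subseteq (\osr\omax\ost)_+$.

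For the second inclusion, I would exploit the defining property of the minimal tensor product. Given $\phi \in (\osr^d)_+$ and $\psi \in (\ost^d)_+$, both functionals are completely positive because their range $\mathbb{C}$ is abelian. By the characterisation of $\omin$ (namely, that tensor products of completely positive scalar-valued maps are completely positive on $\osr\omin\ost$), the map $\phi\otimes\psi : \osr\omin\ost\to\mathbb{C}$ is positive; hence $(\phi\otimes\psi)(x) \geq 0$ for every $x\in(\osr\omin\ost)_+$. Allowing $\phi$ and $\psi$ to vary over $(\osr^d)_+$ and $(\ost^d)_+$ then places $x$ inside $\osr_+\otimes_{{\rm sep}^*}\ost_+$ in the sense of Notation \ref{notation cmplx}.

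No substantial obstacle is expected; the argument is a synthesis of standard facts, and the content lies in recognising that the separability and dual separability cones line up precisely with the two extreme operator system tensor products. The only care required is bookkeeping to ensure that everything is interpreted at the base level and against complex-valued (rather than real-valued) linear functionals, as codified in Notation \ref{notation cmplx}.
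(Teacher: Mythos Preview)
Your proposal is correct and follows essentially the same route as the paper's proof (Proposition~\ref{tc}): sandwich $(\osr\otimes_\sigma\ost)_+$ between $(\osr\omax\ost)_+$ and $(\osr\omin\ost)_+$ via (\ref{e:ten rel}), then verify the two outer inclusions using the explicit $\delta^*(P\otimes Q)\delta$ form for $\omax$ and the fact that positive linear functionals are ucp for $\omin$. The only cosmetic difference is that the paper handles the whole separable sum at once with block-diagonal $P,Q$ and explicitly tracks the Archimedean $\varepsilon$ in the definition of $(\osr\omax\ost)_+$, whereas you treat one elementary tensor at a time and tacitly use that the raw max cone sits inside its Archimedeanisation.
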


%%%%
\subsubsection{Separability}
In the operator system category $\mathfrak S_1$, if $\osr$ and $\ost$ are both
Toeplitz or both Fej\'er-Riesz operator systems, then, by the results in \cite{farenick2021},
\[
\osr\omin\ost \not= \osr\omax\ost.
\]
What this means is that the sequence of positive matrix cones for these tensor product operator systems do not coincide
somewhere along the sequence. Nevertheless, equality is sometimes possible at the base level,
$(\osr\otimes_\sigma\ost)_+$,  in an operator system tensor product $\osr\otimes_\sigma\ost$, as demonstrated by the theorem below.

\begin{theorem} Assume that $n,m\in\mathbb N$ satisfy $n,m\geq 2$.
\begin{enumerate}
\item The positive cone of $C(S^1)^{(n)}\omax C(S^1)^{(m)}$ coincides with separability cone 
$(C(S^1)^{(n)})_+\otimes_{\rm sep} (C(S^1)^{(m)})_+$.
\item The positive cone of $C(S^1)^{(2)}\omin C(S^1)^{(m)}$ coincides with separability cone 
$(C(S^1)^{(2)})_+\otimes_{\rm sep} (C(S^1)^{(m)})_+$.
\item If $x_0,x_1$ are $m\times m$ Toeplitz matrices for which
$\left[ \begin{array}{cc} x_0 & x_1^* \\ x_1 & x_0\end{array}\right]$ is a positive operator,
then for every $n\geq 3$ there exist Toeplitz matrices $x_2$,\dots, $x_{n-1}$ such that
\[
\left[ \begin{array}{cccc} x_0 & x_{1}^* & \dots & x_{n-1}^* \\
x_1 & x_0 & \ddots & \vdots \\ \vdots & \ddots & \ddots & x_1^* \\
x_{n-1} & \dots & x_1 & x_0 
\end{array}
\right]
\]
is a positive operator.
\end{enumerate}
\end{theorem}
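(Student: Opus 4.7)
The inclusion $(C(S^1)^{(n)})_+\otimes_{\rm sep}(C(S^1)^{(m)})_+ \subseteq (C(S^1)^{(n)}\omax C(S^1)^{(m)})_+$ is general, so my plan is to prove the reverse inclusion via duality, nuclearity, and discretization. Combining Theorem~\ref{toeplitz duality} with the KPTT finite-dimensional operator system duality $(\osr\omin\ost)^d\simeq\osr^d\omax\ost^d$ identifies
\[
C(S^1)^{(n)}\omax C(S^1)^{(m)} \simeq \bigl(C(S^1)_{(n)}\omin C(S^1)_{(m)}\bigr)^d,
\]
so a positive element $X$ on the left corresponds to a positive linear functional $\varphi$ on $C(S^1)_{(n)}\omin C(S^1)_{(m)}$. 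Using the nuclearity of $C(S^1)$ and the functoriality of $\omin$ on complete order embeddings, the canonical inclusion $C(S^1)_{(n)}\omin C(S^1)_{(m)}\hookrightarrow C(S^1\times S^1)$ is a complete order embedding, so $\varphi$ extends by Hahn--Banach to a positive functional on $C(S^1\times S^1)$, equivalently (by Riesz--Markov) to a positive Borel measure $\mu$ on $S^1\times S^1$. Since $\varphi$ is determined by only finitely many Fourier moments $\hat\mu(k,\ell)$ with $|k|<n$, $|\ell|<m$, I will invoke Carath\'eodory's theorem on moment cones to replace $\mu$ by a finite atomic measure $\sum_j c_j\delta_{(z_j,w_j)}$ with the same truncated moments. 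Each atom $\delta_{(z_j,w_j)}$ corresponds under the duality to a rank-one positive Toeplitz product $v(z_j)v(z_j)^*\otimes u(w_j)u(w_j)^*$ with $v(z)=(1,\bar z,\dots,\bar z^{n-1})^T$ and $u(w)=(1,\bar w,\dots,\bar w^{m-1})^T$, producing the desired separable decomposition of $X$.

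\textbf{Part (2).} For $X=\begin{pmatrix}T_0 & T_1^*\\ T_1 & T_0\end{pmatrix}\geq 0$ with $T_0,T_1\in C(S^1)^{(m)}$, my plan is to represent $X$ as an integral of positive separable elements over a parameterization, and then discretize. Using Carath\'eodory--Toeplitz, I write $T_0=\int_{S^1}E(w)E(w)^*d\mu_0(w)$ for a positive scalar measure $\mu_0$ on $S^1$, where $E(w)=(1,w,\dots,w^{m-1})^T$. The goal is to extend this to a joint representation $T_1=\int\zeta(w)E(w)E(w)^*d\mu_0(w)$ for a measurable $\zeta:S^1\to\overline{\mathbb D}$; granting this,
\[
X=\int_{S^1}\begin{pmatrix}1 & \overline{\zeta(w)}\\ \zeta(w) & 1\end{pmatrix}\otimes E(w)E(w)^*\,d\mu_0(w)
\]
is an integral of positive separable elements, and atomic discretization of the push-forward measure $(\mathrm{id}\times\zeta)_*\mu_0$ on the 2-torus (as in Part~(1)) reduces this to a finite separable sum. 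To produce $\zeta$ I will factor $X=W^*W$ with $W=(V_0\ V_1)$, exploit the shift-invariance of the resulting Gram matrix (coming from the Toeplitz structure of both $T_0$ and $T_1$) to construct by Sz.-Nagy a unitary $U$ on a dilation Hilbert space $\tilde\H$ with $UV_ie_j=V_ie_{j+1}$ for $i\in\{0,1\}$, and then pass to the spectral representation of $U$ on the cyclic subspace of $\xi=V_0e_0$, where $U$ becomes multiplication by $z$ on $L^2(\mu_0)$ and $V_1$ is determined by a multiplier $g$ with $V_1e_j=g\cdot z^j$; the partial isometry $V_0c\mapsto V_1c$ commutes with $U$ by the shared shift action, so $g$ is a unimodular function and $\zeta=\bar g$ meets the requirement. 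The main obstacle is securing the bound $|\zeta|\le 1$ in a single dilation, which requires carefully ensuring that $V_1e_0$ lies in the cyclic subspace of $V_0e_0$; this hinges on the equality of the two Naimark spectral measures built into the Toeplitz condition $V_0^*V_0=V_1^*V_1=T_0$.

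\textbf{Part (3).} This will fall out as an immediate corollary of Part~(2). Taking the separable decomposition
\[
\begin{pmatrix}x_0 & x_1^*\\ x_1 & x_0\end{pmatrix} = \sum_j c_j\begin{pmatrix}1 & \bar z_j\\ z_j & 1\end{pmatrix}\otimes B_j
\]
supplied by Part~(2), with $c_j>0$, $z_j\in S^1$, and $B_j\in(C(S^1)^{(m)})_+$, I define $x_k=\sum_j c_j z_j^k B_j$ for $k=2,\dots,n-1$. Each $x_k$ is a complex linear combination of the Toeplitz matrices $B_j$, hence is Toeplitz, and with $V(z)=(1,z,\dots,z^{n-1})^T$ the full $n\times n$ block Toeplitz matrix in the statement coincides with $\sum_j c_j V(z_j)V(z_j)^*\otimes B_j$, a positive sum of tensor products of positive rank-one matrices with positive Toeplitz matrices, and is therefore positive.
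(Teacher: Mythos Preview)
Your approach is correct and genuinely different from the paper's. The paper argues directly: it shows the separability cone is closed (its extremal base $\{T_n(\lambda)\otimes T_m(\mu):\lambda,\mu\in S^1\}$ is compact, so its convex hull is compact by Carath\'eodory), then uses that for any $x\in(C(S^1)^{(n)}\omax C(S^1)^{(m)})_+$ the strict perturbation $x+\varepsilon(1_n\otimes 1_m)$ is separable by an earlier result of Farenick--McBurney, and passes to the limit $\varepsilon\to 0$. Your route via Toeplitz and tensor duality, Hahn--Banach extension to a positive measure on the $2$-torus, and Tchakaloff-type atomic replacement is more self-contained (it does not invoke the external Farenick--McBurney result) and makes the link to the bivariate trigonometric moment problem explicit; the paper's argument is shorter once that result is available.

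\textbf{Part (2).} Here you are working far too hard, and your plan has a real gap that you yourself flag: the dilation construction does not establish that $V_1e_0$ lies in the $U$-cyclic subspace generated by $V_0e_0$, and without that the multiplier $g$ (hence $\zeta$) is not even defined on the support of $\mu_0$, let alone unimodular there. The paper's proof is essentially two lines once Gurvits' theorem is in hand. Since $x\in(C(S^1)^{(2)}\omin C(S^1)^{(m)})_+\subseteq(\M_2(\mathbb C)\omin C(S^1)^{(m)})_+$, Gurvits (applied with the Toeplitz factor $C(S^1)^{(m)}$) gives $x=\sum_j c_j\otimes s_j$ with $c_j\in\M_2(\mathbb C)_+$ and $s_j\in(C(S^1)^{(m)})_+$. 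The key observation you are missing is that the diagonal-averaging idempotent $\mathcal E_2:\M_2(\mathbb C)\to C(S^1)^{(2)}$ is \emph{positive}: if $\alpha,\beta\ge 0$ and $|\gamma|^2\le\alpha\beta$, then $|\gamma|\le\sqrt{\alpha\beta}\le(\alpha+\beta)/2$ by the arithmetic--geometric mean inequality. Since $\mathcal E_2$ fixes $C(S^1)^{(2)}$ elementwise, applying $\mathcal E_2\otimes\mathrm{id}$ to both sides yields $x=\sum_j\mathcal E_2(c_j)\otimes s_j$ with $\mathcal E_2(c_j)\in(C(S^1)^{(2)})_+$ and $s_j$ already positive Toeplitz. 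This is precisely what singles out the case $n=2$: the analogous map $\mathcal E_n$ fails to be positive for $n\ge 3$.

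\textbf{Part (3).} Your argument is the same as the paper's and is correct once Part~(2) is established.
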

 
 The first two assertions above sharpen Proposition 3.4 and Corollary 3.5, respectively, in 
 \cite{farenick--mcburney2023}, as well as some results of Ando in \cite{ando2004,ando2013},
 while the third assertion fails if one aims to extend $3\times 3$ positive block-Toeplitz matrices with $m\times m$ Toeplitz blocks to  
 $n\times n$ ($n\geq 4$) positive block-Toeplitz matrices with $m\times m$ Toeplitz blocks.
 
 In addition, a purely algebraic proof establishes the following theorem, which can also be proved using the
 main result of \cite{aubrun--ludovico--palazuelos2021}.
 
 \begin{theorem}\label{btcb} If $\cstar(\mathbb Z_m)$ denotes the operator system of $m\times m$ complex circulant matrices,
 then,  for every $n\geq 2$, 
 \[
 (C(S^1)^{(n)})_+\otimes_{\rm sep} \cstar(\mathbb Z_m)_+
 =
 (C(S^1)^{(n)})_+\otimes_{{\rm sep}^*} \cstar(\mathbb Z_m)_+.
\]
\end{theorem}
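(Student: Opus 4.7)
The plan is to exploit the fact that $\cstar(\mathbb Z_m)$, being the group C$^*$-algebra of a finite abelian group, admits a simultaneous diagonalisation under the discrete Fourier transform: writing $\omega=e^{2\pi i/m}$ and letting $F$ denote the DFT matrix, each circulant $C=\mathrm{circ}(c_0,\dots,c_{m-1})$ satisfies $F^*CF=\mathrm{diag}(\hat c_0,\dots,\hat c_{m-1})$ with $\hat c_j=\sum_k c_k\omega^{jk}$. Conjugation by $F$ is a unital $*$-isomorphism $\cstar(\mathbb Z_m)\xrightarrow{\sim}\mathbb C^m$ of C$^*$-algebras, hence a unital complete order isomorphism in $\mathfrak S_1$ that carries $\cstar(\mathbb Z_m)_+$ onto the standard simplicial cone $(\mathbb C^m)_+$, with generating basis the self-adjoint coordinate vectors $e_0,\dots,e_{m-1}$. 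Because the separable and dual-separable cones depend only on the base cones of the two operator systems, it suffices to prove
\[
\osr_+\otimes_{\rm sep}(\mathbb C^m)_+=\osr_+\otimes_{{\rm sep}^*}(\mathbb C^m)_+
\]
for $\osr=C(S^1)^{(n)}$; in fact, the same argument works for any finite-dimensional operator system $\osr$.

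For the nontrivial inclusion, fix $\xi\in\osr_+\otimes_{{\rm sep}^*}(\mathbb C^m)_+$. Since each $e_j$ is self-adjoint, Notation \ref{notation cmplx} places $\xi$ in $\osr_{\rm sa}\otimes_{\mathbb R}(\mathbb C^m)_{\rm sa}$, so $\xi$ admits the unique expansion $\xi=\sum_{j=0}^{m-1}x_j\otimes e_j$ with $x_j\in\osr_{\rm sa}$. The coordinate functionals $\psi_j:e_k\mapsto\delta_{jk}$ lie in $((\mathbb C^m)^d)_+$, so the defining inequality of $\otimes_{{\rm sep}^*}$ forces
\[
\phi(x_j)=(\phi\otimes\psi_j)(\xi)\geq 0
\]
for every $j$ and every $\phi\in(\osr^d)_+$. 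Using the canonical identification of $(\osr_{\rm sa}^d)_+$ with $(\osr^d)_+$ developed earlier in the paper, and then the bipolar theorem applied to the proper cone $\osr_+\subseteq\osr_{\rm sa}$, one concludes that $x_j\in\osr_+$ for every $j$. Hence
\[
\xi=\sum_{j=0}^{m-1}x_j\otimes e_j
\]
exhibits $\xi$ as a sum of elementary separable tensors $x_j\otimes e_j\in\osr_+\otimes_{\rm sep}(\mathbb C^m)_+$, which is the desired containment.

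There is no real obstacle here: the whole argument rests on the simplicial nature of $(\mathbb C^m)_+$ together with the bipolar theorem, which is precisely why the same conclusion also follows abstractly from Theorem \ref{equality}. The only bookkeeping issue — the real versus complex distinction enforced by Notation \ref{notation cmplx} — is handled cleanly by the observation that the simplicial basis $\{e_j\}$ consists of self-adjoint elements, so that $\osr_{\rm sa}\otimes_{\mathbb R}(\mathbb C^m)_{\rm sa}=(\osr\otimes_{\mathbb C}\mathbb C^m)_{\rm sa}$ and the coordinate-by-coordinate reduction proceeds without further fuss.
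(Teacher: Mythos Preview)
Your argument is correct, but it follows a genuinely different route from the paper's ``purely algebraic'' proof (Proposition~\ref{btcb2}). The paper factors the equality through the intermediate cone $\left(C(S^1)^{(n)}\omin C^{m,\theta}\right)_+$: to show this $\omin$-cone is contained in the separability cone it invokes Gurvits' Separation Theorem (Theorem~\ref{sep2}) to decompose a positive block-Toeplitz matrix with $\M_m(\mathbb C)$ blocks, and then applies the completely positive idempotent $\mathcal F_{m,\theta}$ of Proposition~\ref{idempotent2} to force the blocks to be circulant; the inclusion of the dual-separable cone into the $\omin$-cone is then handled via the direct-sum identification $C(S^1)^{(n)}\omin\mathbb C^m\simeq\bigoplus_1^m C(S^1)^{(n)}$. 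Your proof bypasses both Gurvits and the $\omin$ step entirely, going straight from $\otimes_{{\rm sep}^*}$ to $\otimes_{\rm sep}$ via the simplicial basis and the bipolar theorem --- this is exactly the elementary direction of Theorem~\ref{equality} made explicit, as you yourself note. What you gain is simplicity and generality (your argument works for any finite-dimensional $\osr$ in place of $C(S^1)^{(n)}$); what the paper's route gains is the additional identification of the $\omin$-cone with both extremes, which your argument does not address.
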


%%%%
\subsubsection{Entanglement}

The first result is in contrast to Theorem \ref{btcb} above. 

\begin{theorem} If $\osr_+$ and $\ost_+$ are Toeplitz or Fej\'er-Riesz cones, then
\[
\osr_+\otimes_{\rm sep}\ost_+\not= \osr_+\otimes_{{\rm sep}^*} \ost_+.
\]
\end{theorem}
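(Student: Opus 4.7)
The strategy is to reduce the claim to Theorem \ref{equality}: for finite-dimensional operator systems $\osr$ and $\ost$, equality $\osr_+\otimes_{\rm sep}\ost_+=\osr_+\otimes_{{\rm sep}^*}\ost_+$ holds if and only if one of the base cones is simplicial. Hence it suffices to verify that neither a Toeplitz cone $(C(S^1)^{(n)})_+$ nor a Fej\'er-Riesz cone $(C(S^1)_{(n)})_+$ is simplicial when $n\geq 2$. Since a simplicial cone in a real vector space of dimension $d$ has exactly $d$ extreme rays, it is enough to exhibit uncountably many distinct extreme rays in each case.

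For the Toeplitz cone, I would parametrise extreme rays by the circle. Given $\lambda\in S^1$, set $v_\lambda=(1,\lambda,\lambda^2,\ldots,\lambda^{n-1})^T\in\mathbb C^n$ and consider the rank-one positive matrix $v_\lambda v_\lambda^*$. Using $|\lambda|=1$, one computes $(v_\lambda v_\lambda^*)_{ij}=\lambda^{i-j}$, so $v_\lambda v_\lambda^*\in C(S^1)^{(n)}$. Any decomposition $v_\lambda v_\lambda^*=A+B$ with $A,B\in\M_n(\mathbb C)_+$ forces $A$ and $B$ to be nonnegative scalar multiples of $v_\lambda v_\lambda^*$ (as rank-one positive matrices are extreme in $\M_n(\mathbb C)_+$), and such scalar multiples are automatically Toeplitz; hence $v_\lambda v_\lambda^*$ is extreme in $(C(S^1)^{(n)})_+$. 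Since the off-diagonal entry $\lambda$ already ranges over the uncountable set $S^1$, this yields uncountably many distinct extreme rays, so the cone is not simplicial.

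For the Fej\'er-Riesz cone the quickest route is duality: by Theorem \ref{toeplitz duality}, $C(S^1)^{(n)}\simeq(C(S^1)_{(n)})^d$ as operator systems, so their base cones are dual to one another. The dual of a simplicial cone is simplicial, so if $(C(S^1)_{(n)})_+$ were simplicial then $(C(S^1)^{(n)})_+$ would be as well, contradicting the previous step. Combining the two non-simpliciality conclusions with Theorem \ref{equality} yields the strict inclusion in every combination of Toeplitz and Fej\'er-Riesz cones.

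The only real obstacle in this outline is verifying that the rank-one witnesses $v_\lambda v_\lambda^*$ remain extreme after passing to the Toeplitz subcone; the observation that positive scalar multiples of a Toeplitz matrix are Toeplitz makes this immediate, and everything else is a direct application of the two theorems already at hand.
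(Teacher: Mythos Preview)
Your proof is correct and follows essentially the same strategy as the paper: invoke Theorem~\ref{equality} (the Aubrun--Lami--Palazuelos--Pl\'avala criterion) and verify that neither cone is simplicial by exhibiting uncountably many extreme rays. The only difference is in the Fej\'er-Riesz case: the paper (via Proposition~\ref{not classical}) produces explicit extremal elements $f_\lambda(z)=\lambda z^{-1}+2+\lambda^{-1}z$ using the Connes--van~Suijlekom characterisation of extreme rays, whereas you sidestep this by invoking Toeplitz duality (Theorem~\ref{toeplitz duality}) together with the fact that the dual of a simplicial cone is simplicial. Your duality shortcut is slightly slicker and avoids needing any description of the Fej\'er-Riesz extreme rays, while the paper's approach is more explicit; both routes are short and rest on the same underlying idea.
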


An important matrix in the study of Toeplitz and Fej\'er-Riesz operator systems is the universal 
positive  $n\times n$ Toeplitz matrix $T_n$, the element of the positive cone of 
$C(S^1)^{(n)}\otimes C(S^1)_{(n)}$  defined by 
\[
T_n(z)
= 
\left[ \begin{array}{cccc} 1 & z^{-1} & \dots & z^{-n+1} \\
z & 1 & \ddots & \vdots \\ \vdots & \ddots & \ddots & z^{-1} \\
z^{n-1} & \dots & z & 1 
\end{array}
\right],
\]
for $z\in S^1$.

\begin{theorem} For every $n\geq 2$, $T_n$ generates an extremal ray of the positive cone of
$C(S^1)^{(n)}\omin C(S^1)_{(n)}$ and is entangled.
\end{theorem}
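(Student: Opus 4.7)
Setting $\osr = C(S^1)^{(n)}$ and $\ost = C(S^1)_{(n)}$, the plan is to exploit the standard isomorphism $(\osr\omax\ost)_+ \simeq \mathrm{CP}(\osr^d,\ost)$ from the operator system tensor product theory of \cite{kavruk--paulsen--todorov--tomforde2011}, combined with the duality $\osr^d\simeq\ost$ of Theorem \ref{toeplitz duality}, to realise $(\osr\omax\ost)_+$ as the cone $\mathrm{CP}(\ost,\ost)$ of completely positive self-maps of the Fej\'er-Riesz operator system. A direct calculation on the generators $\chi_\ell$ of $\ost$ then shows that $T_n$ is carried under this identification to the flip $F: f(z)\mapsto f(z^{-1})$, which is a $\ast$-automorphism of $C(S^1)$ preserving Fourier degree and hence restricts to a unital complete order automorphism of $\ost$ satisfying $F^{-1} = F$; in particular $T_n \in (\osr\omax\ost)_+$.

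For the extremal-ray claim, suppose $T_n = X + Y$ with $X, Y \in (\osr\omax\ost)_+$, and let $\phi_X + \phi_Y = F$ denote the corresponding CP decomposition. Post-composition with $F^{-1}$ reduces the problem to showing that if $A + B = \mathrm{id}_\ost$ with $A, B$ CP, then $A = t\cdot\mathrm{id}$ for some $t\in[0,1]$. The key input is that for $n\geq 2$ every evaluation state $\mathrm{ev}_z$ is a pure state on $\ost$: the non-negative function $|w-z|^2 = 2 - z^{-1}w - zw^{-1}$ lies in $C(S^1)_{(2)}\subseteq\ost$, so any positive measure on $S^1$ representing $\mathrm{ev}_z$ must annihilate it and therefore equals $\delta_z$. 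Purity then converts $\mathrm{ev}_z\circ A \leq \mathrm{ev}_z$ into $\mathrm{ev}_z\circ A = t(z)\mathrm{ev}_z$, giving $A(f)(z) = t(z)f(z)$ for every $f \in \ost$ and $z\in S^1$. Taking $f = 1$ places $t$ in $\ost$; requiring $t\cdot\chi_{\pm(n-1)} \in \ost$ then forces every non-zero Fourier coefficient of $t$ to vanish, so $t$ is a constant and $X = tT_n$.

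For entanglement, assume toward contradiction that $T_n = \sum_{i=1}^N A_i\otimes B_i$ with $A_i \in\osr_+$ and $B_i\in\ost_+$. Evaluating at $z\in S^1$ yields
\[
v(z)v(z)^* \;=\; \sum_{i=1}^N B_i(z)A_i, \qquad v(z) := (1,z,\dots,z^{n-1})^T.
\]
Since $v(z)v(z)^*$ has rank one and the range of a positive sum of positive matrices contains the range of each summand, every $A_i$ with $B_i\not\equiv 0$ has rank at most one; and the rank-one positive Toeplitz matrices are exactly the non-negative scalar multiples of $v(w)v(w)^*$ with $w\in S^1$, so $A_i = c_i v(w_i)v(w_i)^*$ for finitely many points $w_1,\dots,w_N \in S^1$. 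The Carath\'eodory--Toeplitz integral representation identifies $\{v(w)v(w)^* : w\in S^1\}$ as the extreme rays of the positive Toeplitz cone, so in the displayed identity only summands with $w_i = z$ can contribute: $B_i(z) = 0$ whenever $w_i\neq z$. For any $z\notin\{w_1,\dots,w_N\}$ the right-hand side then vanishes while the left does not, a contradiction.

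The main obstacle is the bookkeeping step of unwinding the duality $\delta$ of Theorem \ref{toeplitz duality} on the Toeplitz units $S_j$ and the Fourier monomials $\chi_\ell$ to verify explicitly that $T_n$ corresponds to the flip automorphism; once this identification is in place, the extremality and entanglement conclusions each collapse to one clean rigidity input---purity of $\mathrm{ev}_z$ on $\ost$ for the former, and extremality of the rank-one rays $v(z)v(z)^*$ in the positive Toeplitz cone for the latter.
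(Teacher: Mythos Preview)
Your proposal contains one genuine gap and two parts that are correct and in fact go beyond what the paper supplies.

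\textbf{The gap.} The identification $(\osr\omax\ost)_+\simeq\mathrm{CP}(\osr^d,\ost)$ that you attribute to \cite{kavruk--paulsen--todorov--tomforde2011} is not a theorem there; the standard identification (and the one the present paper records in \S3.4, via $x\mapsto\hat x$) is for the \emph{minimal} tensor product: $x\in(C(S^1)^{(n)}\omin\ost)_+$ if and only if $\hat x:C(S^1)_{(n)}\to\ost$ is completely positive. If the max identification you state were true, it would force $(\osr\omax\ost)_+=(\osr\omin\ost)_+$ for every finite-dimensional $\osr$, which is false. So your computation that $T_n$ corresponds to the flip $F$, while correct, only yields $T_n\in(C(S^1)^{(n)}\omin C(S^1)_{(n)})_+$; membership in the smaller max cone needs a separate argument. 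The paper handles this directly (Proposition~\ref{T_n is max}): for each $\varepsilon>0$ one writes $T_n+\varepsilon(1_n\otimes\chi_0)=\beta^*(P\otimes Q)\beta$ with $P=1_n$ and $Q=T_n+\varepsilon 1_n\in\M_n(C(S^1)_{(n)})_+$, so the defining max-cone criterion is met.

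\textbf{What is correct and different.} Once $T_n\in(\osr\omax\ost)_+$ is secured, your extremality argument is valid and proceeds by a route the paper does not spell out: any decomposition $T_n=X+Y$ in $(\osr\omax\ost)_+$ is \emph{a fortiori} a decomposition in $(\osr\omin\ost)_+$, so the (correct) min--CP correspondence gives $\phi_X+\phi_Y=F$ with $\phi_X,\phi_Y$ completely positive, and your purity-of-$\mathrm{ev}_z$ plus Fourier-support argument then forces $\phi_X=tF$. The paper states extremality in the introduction but Proposition~\ref{T_n is max} only establishes positivity in max and entanglement, so your argument actually supplies a missing detail. Likewise your entanglement proof, via the rank-one structure of $T_n(z)=v(z)v(z)^*$ and the classification of rank-one positive Toeplitz matrices (Proposition~\ref{cvS}), is correct and self-contained; the paper instead cites \cite[Theorem 4.4]{farenick--mcburney2023} for entanglement in the larger cone $(\M_n(\mathbb C)\omin C(S^1)_{(n)})_+$ and descends.

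In short: replace your opening appeal to a nonexistent max--CP isomorphism by the paper's direct verification that $T_n\in(\osr\omax\ost)_+$, and the rest of your argument stands and is genuinely informative.
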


Interestingly, the matrix $T_n$ has the same role for determining the complete positivity of linear maps on $C(S^1)_{(n)}$ 
that the Choi matrix \cite{choi1972} has for determining the complete positivity of linear maps on $\M_n(\mathbb C)$,
as noted below.

\begin{theorem} A linear map $\phi:C(S^1)_{(n)}\rightarrow\ost$, for an operator system $\ost$, is completely positive if and only
if $\phi^{(n)}(T_n)$ is positive in $\M_n(\ost)$, where $\phi^{(n)}$ is the ampliation of $\phi$ to a linear map
$\M_n(C(S^1)_{(n)})\rightarrow\M_n(\ost)$.
\end{theorem}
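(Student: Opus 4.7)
The plan is to handle the forward direction by direct inspection of $T_n$ and the converse by invoking the matrix-valued Fej\'er--Riesz factorization theorem.

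For the easy direction, I would observe that $T_n(z) = \xi(z)\xi(z)^*$ with $\xi(z) = (1, z, \ldots, z^{n-1})^{\!\top}$, exhibiting $T_n$ as a rank-one positive element of $\M_n(C(S^1)_{(n)})$. Complete positivity of $\phi$ then immediately yields $\phi^{(n)}(T_n) \in \M_n(\ost)_+$.

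For the converse, I would set $A := \phi^{(n)}(T_n)$, so that $A \in \M_n(\ost)_+$ has $(i,j)$-entry $\phi(\chi_{i-j})$, where $\chi_k(z) = z^k$. Fix $p \geq 1$ and a positive element $F \in \M_p(C(S^1)_{(n)})_+$; by definition $F(z) \in \M_p(\mathbb C)_+$ for every $z \in S^1$. The crucial step is to invoke the matrix Fej\'er--Riesz theorem in \emph{outer} form: I would obtain $G_0, \ldots, G_{n-1} \in \M_{p,q}(\mathbb C)$, for some $q$, such that $F(z) = G(z)G(z)^*$ with $G(z) = \sum_{k=0}^{n-1} G_k z^k$. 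Expanding and matching Laurent coefficients against the values of $\phi$ on $\chi_\ell$, a direct calculation then gives
\[
\phi^{(p)}(F) \;=\; \sum_{i,j=0}^{n-1} G_i G_j^* \otimes A_{ij} \;=\; V\,(A \otimes 1_q)\,V^*,
\]
where $V = [\,G_0\;G_1\;\cdots\;G_{n-1}\,] \in \M_{p,nq}(\mathbb C)$ is the scalar block-row matrix assembled from the $G_k$. Since $A \otimes 1_q \in \M_{nq}(\ost)_+$ and conjugation by a scalar matrix preserves positivity in matrix amplifications over $\ost$ (as one sees upon fixing any faithful representation $\ost \subseteq \B(\H)$), this yields $\phi^{(p)}(F) \geq 0$. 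As $p$ and $F$ are arbitrary, $\phi$ is completely positive.

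The main subtlety I anticipate is the choice of the outer factorization $F = GG^*$ rather than the inner form $F = G^*G$. Only the outer factorization produces an expression of the shape $V(A \otimes 1_q)V^*$ directly involving $A$ itself; the inner factorization instead yields a sum featuring the entrywise transpose $[A_{ji}]$, whose positivity is not guaranteed when $\ost$ is non-commutative. With the correct version of Fej\'er--Riesz selected, the remaining computations and the positivity argument are entirely routine.
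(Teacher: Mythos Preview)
Your argument is correct. Both directions are sound: the forward direction is immediate from the pointwise factorisation $T_n(z)=\xi(z)\xi(z)^*$, and for the converse your computation $\phi^{(p)}(F)=\sum_{i,j}G_iG_j^*\otimes A_{ij}=V(A\otimes 1_q)V^*$ checks out, with positivity following from the operator-system compatibility axiom $\alpha^*\mathcal C_N\alpha\subseteq\mathcal C_p$ (no need to pass to a concrete representation). Your remark on outer versus inner factorisations is exactly right; the outer form $F=GG^*$ is what produces $A$ rather than its entrywise transpose. The matrix Fej\'er--Riesz theorem supplies a square factorisation ($q=p$), so your ``for some $q$'' is harmless.

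Your route, however, differs from the paper's. The paper does not invoke matrix Fej\'er--Riesz at all; instead it embeds $\ost$ into its C$^*$-envelope $\cstare(\ost)$ and appeals to the \emph{universal property of $T_n$} established in \cite[Theorem 7.7]{farenick2021}: positivity of the block Toeplitz matrix $\sum_\ell r_\ell\otimes\phi(\chi_\ell)$ in $C(S^1)^{(n)}\omin\cstare(\ost)$ guarantees the existence of a completely positive map $\psi:C(S^1)_{(n)}\to\cstare(\ost)$ with $\psi(\chi_\ell)=\phi(\chi_\ell)$; since $\psi$ and $\phi$ agree on a basis, $\phi=\psi$ is completely positive. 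The paper's argument is shorter on the page but defers the real work to an external citation (whose proof, incidentally, ultimately rests on machinery of comparable depth). Your argument is self-contained modulo the matrix Fej\'er--Riesz theorem and makes the mechanism completely transparent: positivity of $\phi^{(n)}(T_n)$ is literally the single matrix inequality from which all of $\phi^{(p)}(F)\geq 0$ is obtained by compression.
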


Elements of $C(S^1)_{(n)}\otimes C(S^1)_{(m)}$ may viewed naturally as continuous 
complex-valued functions on the 2-torus $S^1\times S^1$. In this regard, we have:

\begin{theorem} For every $n,m\geq 2$,
\[
\left(C(S^1)_{(n)}\omin C(S^1)_{(m)}\right)_+
=
(C(S^1)_{(n)})_+\otimes_{{\rm sep}^*} (C(S^1)_{(m)})_+ 
\]
and 
\[
\left(C(S^1)_{(2)}\omax C(S^1)_{(m)}\right)_+
=
(C(S^1)_{(2)})_+\otimes_{{\rm sep}^*} (C(S^1)_{(m)})_+ .
\]
\end{theorem}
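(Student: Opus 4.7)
The plan is to obtain both equalities by dualising the separability results stated earlier, using the Toeplitz--Fej\'er-Riesz duality of Theorem~\ref{toeplitz duality}. The first equality additionally admits a direct proof that exploits only the commutativity $C(S^1)\omin C(S^1)\simeq C(S^1\times S^1)$, which I would present for completeness since it avoids any appeal to operator-system duality.

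For the first equality, I would identify $C(S^1)_{(n)}\omin C(S^1)_{(m)}$ with its canonical image as an operator subsystem of $C(S^1\times S^1)$. This identification shows that $\left(C(S^1)_{(n)}\omin C(S^1)_{(m)}\right)_+$ consists precisely of those $F\in C(S^1)_{(n)}\otimes C(S^1)_{(m)}$ for which $F(z,w)\ge 0$ at every $(z,w)\in S^1\times S^1$. The containment of $(C(S^1)_{(n)})_+\otimes_{{\rm sep}^*}(C(S^1)_{(m)})_+$ inside the min cone is then immediate, since each point evaluation $\mathrm{ev}_z$ is a positive linear functional on $C(S^1)_{(n)}$, so $F(z,w)=(\mathrm{ev}_z\otimes\mathrm{ev}_w)(F)\geq 0$ whenever $F$ is in the dual separable cone. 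The reverse containment is supplied by Proposition~\ref{tc}; alternatively, given positive functionals $\phi\in (C(S^1)_{(n)})^d_+$ and $\psi\in (C(S^1)_{(m)})^d_+$, the Hahn--Banach/Arveson extension produces positive functionals on $C(S^1)$ that by Riesz representation correspond to positive Borel measures $\mu,\nu$ on $S^1$, and then
\[
(\phi\otimes\psi)(F)=\int_{S^1\times S^1} F(z,w)\,d\mu(z)\,d\nu(w)\geq 0
\]
whenever $F$ is pointwise nonnegative.

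For the second equality, I would invoke the operator-system tensor-product duality: for finite-dimensional operator systems, $(\osr\omin\ost)^d \simeq \osr^d\omax\ost^d$ in $\mathfrak S_1$ \cite{kavruk--paulsen--todorov--tomforde2011}, together with the cone identity $(C_1\otimes_{\rm sep}C_2)^d = C_1^d\otimes_{{\rm sep}^*}C_2^d$. The latter is immediate from the definition $C_1^d\otimes_{{\rm sep}^*}C_2^d := ((C_1^d)^d\otimes_{\rm sep}(C_2^d)^d)^d$ combined with the bipolar theorem. Combined with Theorem~\ref{toeplitz duality}, which supplies $(C(S^1)^{(2)})^d\simeq C(S^1)_{(2)}$ and $(C(S^1)^{(m)})^d\simeq C(S^1)_{(m)}$, the identity
\[
\left(C(S^1)^{(2)}\omin C(S^1)^{(m)}\right)_+ = (C(S^1)^{(2)})_+\otimes_{\rm sep}(C(S^1)^{(m)})_+
\]
(which is the second part of the separability theorem proved earlier) dualises on the left-hand side to $(C(S^1)_{(2)}\omax C(S^1)_{(m)})_+$ and on the right-hand side to $(C(S^1)_{(2)})_+\otimes_{{\rm sep}^*}(C(S^1)_{(m)})_+$, which is exactly the desired max-equality. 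The same device, applied to the first part of that separability theorem, gives an alternative proof of the first equality here as well.

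The main obstacle I anticipate is the careful bookkeeping required to verify that the canonical operator-system duality $(\osr\omin\ost)^d \simeq \osr^d\omax\ost^d$ matches, at the base-cone level and under the relevant pairings, the Toeplitz--Fej\'er-Riesz duality of Theorem~\ref{toeplitz duality}; once those identifications of the relevant positive cones are made transparent, the remainder of the argument is a routine bipolar computation.
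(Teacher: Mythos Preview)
Your proposal is correct and follows essentially the same approach as the paper. For the first equality, both you and the paper show that membership in the dual separable cone forces pointwise nonnegativity via point evaluations (Proposition~\ref{ext rays dual}), with the reverse inclusion coming from Proposition~\ref{tc}; for the second equality, both arguments dualise Proposition~\ref{2-sep} via Theorems~\ref{tensor duality} and~\ref{toeplitz duality}, the only cosmetic difference being that the paper dualises the positivity of the identity map $\iota:\omin\to\omax$ while you dualise the cone equality directly.
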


%%%%%%%%
\subsubsection{Categorical relations}
 
\begin{theorem}\label{zxc} If $\osr$ is a Toeplitz or Fej\'er-Riesz operator system, then:
\begin{enumerate}
\item there are no unital C$^*$-algebras $\A$ for which $\osr\simeq \A$;
\item $\osr$ does not have the weak expectation property;
\item  $\osr\omin \B(\H) = \osr\omax\B(\H)$, for every Hilbert space $\H$;
\item $\osr\omin \osi = \osr\omax\osi$, for every injective operator system $\osi$.
\end{enumerate}
\end{theorem}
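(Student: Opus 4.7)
The plan is to deduce all four items from two principal earlier results of this paper---the $\cstar$-nuclearity of any Toeplitz or Fej\'er-Riesz operator system, and the strict inequality $\osr \omin \ost \ne \osr \omax \ost$ for any Toeplitz or Fej\'er-Riesz pair $\osr,\ost$ (from the ``Entanglement'' subsection)---together with the Choi--Effros theorem that every injective operator system is unital complete order isomorphic to a unital monotone-complete $\cstar$-algebra.

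I would first dispose of (4) by citing the earlier $\cstar$-nuclearity theorem. Then (3) is immediate from (4) via Choi--Effros: if $\osi$ is injective, then $\osi \simeq \A$ in $\mathfrak{S}_1$ for some unital $\cstar$-algebra $\A$, and because the operator system tensor products $\omin$ and $\omax$ are functorial with respect to unital complete order isomorphisms, (4) yields $\osr \omin \osi \simeq \osr \omin \A = \osr \omax \A \simeq \osr \omax \osi$. For (1), I would argue by contradiction: if $\osr \simeq \A$ for some unital $\cstar$-algebra $\A$, then the finite-dimensionality of $\osr$ forces $\A$ to be finite-dimensional, so (4) applied to $\A$ gives $\osr \omin \A = \osr \omax \A$, and transferring this equality across the isomorphism $\osr \simeq \A$ yields $\osr \omin \osr = \osr \omax \osr$, contradicting the entanglement theorem.

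For (2), the plan is to invoke Kavruk's characterisation of the operator system weak expectation property: a finite-dimensional operator system $\osr$ has WEP if and only if $\osr^{**}$, which in the finite-dimensional setting coincides with $\osr$, is injective. Combined with Choi--Effros, this means WEP for $\osr$ is equivalent to $\osr$ being unital complete order isomorphic to a unital $\cstar$-algebra, and this has just been ruled out by (1); hence $\osr$ does not have WEP. The main obstacle is no more than lining up the correct formulation of the operator system WEP in step (2) so that it reduces cleanly to (1); otherwise each part is a short deduction from results already established in the paper, combined with a standard application of Choi--Effros.
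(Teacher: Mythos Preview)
Your proposal is correct, and for item (4) it coincides with the paper's argument, but for (1), (2), and (3) you take a genuinely different, more unified route than the paper does. The paper establishes (1) and (2) \emph{independently} of $\cstar$-nuclearity: for the Toeplitz case it uses Arveson's Boundary Theorem (Proposition~\ref{not-cstar} and Proposition~\ref{no-wep1}), and for the Fej\'er-Riesz case it uses the explicit identification of the $\cstar$-envelope as $C(S^1)$ (Proposition~\ref{poi}) and a lemma about $\omax$ together with Theorem~\ref{non-equal}. For (3), the paper invokes a result of \cite{kavruk--paulsen--todorov--tomforde2013} on $\B(\H)$-nuclearity rather than passing directly through Choi--Effros. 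Your approach is cleaner in that everything flows from (4) plus Theorem~\ref{non-equal} and Choi--Effros; the paper's approach has the virtue that (1) and the Toeplitz half of (2) stand on their own, without the tensor-product machinery behind $\cstar$-nuclearity.

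One small caution on step (2): the equivalence ``finite-dimensional WEP $\Leftrightarrow$ injective'' is correct, but it is not so much a citable theorem of Kavruk as a two-line observation from the paper's own definition of WEP (a ucp extension $\osi(\osr)\to\osr^{dd}=\osr$ of the identity is precisely a ucp retraction onto $\osr$, whence $\osr$ is injective). You should supply that argument explicitly rather than invoke an unnamed characterisation.
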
  

The truncation of $m\times m$ Toeplitz matrices to $n\times n$ Toeplitz matrices, when $m\geq n$, is a very natural map
sending large Toeplitz matrices to smaller ones; it is in fact
a complete quotient map in the sense of \cite{farenick--paulsen2012}.

\begin{theorem} $C(S^1)^{(n)}$ is an operator system quotient of $C(S^1)^{(m)}$, if $m\geq n$.
\end{theorem}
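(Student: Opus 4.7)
The plan is to realize the truncation map dually, via the Toeplitz--Fej\'er-Riesz duality of Theorem \ref{toeplitz duality}, as the inclusion of $C(S^1)_{(n)}$ into $C(S^1)_{(m)}$, and then invoke the general duality principle that, for finite-dimensional operator systems, a ucp map is a complete quotient map exactly when its dual is a unital complete order embedding.

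First I would define the truncation map $q_{m,n}:C(S^1)^{(m)}\to C(S^1)^{(n)}$ by $q_{m,n}([\tau_{k-\ell}]_{k,\ell=0}^{m-1})=[\tau_{k-\ell}]_{k,\ell=0}^{n-1}$. This is nothing but compression by the isometry $V:\mathbb C^n\hookrightarrow \mathbb C^m$ onto the first $n$ standard basis vectors, namely $q_{m,n}(T)=V^*TV$, and compression by an isometry is automatically unital and completely positive. Surjectivity of $q_{m,n}$ onto $C(S^1)^{(n)}$ is immediate since every $n\times n$ Toeplitz matrix lifts to the $m\times m$ Toeplitz matrix with the same diagonal entries (and zeros beyond).

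Next I would compute the dual map. Using the two instances of Theorem \ref{toeplitz duality} identifying $(C(S^1)^{(k)})^d\simeq C(S^1)_{(k)}$, the dual of $q_{m,n}$ becomes a ucp map $\iota:C(S^1)_{(n)}\to C(S^1)_{(m)}$. Tracing the pairing $\varphi_T(f)=\sum_{k=-n+1}^{n-1}\tau_{-k}\hat f(k)$ through the truncation: for $f\in C(S^1)_{(n)}$ and $T'=[\tau'_{k-\ell}]\in C(S^1)^{(m)}$, one has
\[
\varphi_{q_{m,n}(T')}(f)=\sum_{k=-n+1}^{n-1}\tau'_{-k}\hat f(k)=\sum_{k=-m+1}^{m-1}\tau'_{-k}\widehat{\iota(f)}(k),
\]
which forces $\iota(f)=f$, viewed as an element of $C(S^1)_{(m)}$ whose Fourier coefficients vanish for $n\leq|k|\leq m-1$. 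In other words, the dual of $q_{m,n}$ is simply the natural inclusion of Fej\'er-Riesz systems.

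Then I would verify that $\iota:C(S^1)_{(n)}\hookrightarrow C(S^1)_{(m)}$ is a unital complete order embedding. Unitality is clear since the constant function $\chi_0$ is the order unit of both systems. For the complete order embedding property, positivity at level $p$ in either Fej\'er-Riesz system is simply pointwise positivity of the matrix-valued function on $S^1$, and this condition is intrinsic to the function and independent of the ambient degree bound. Thus $F\in \M_p(C(S^1)_{(n)})_+$ if and only if $\iota^{(p)}(F)\in \M_p(C(S^1)_{(m)})_+$, as required.

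Finally, appealing to the duality theorem for finite-dimensional operator systems (\emph{ucp map is a complete quotient map iff its dual is a unital complete order embedding}, as in \cite{farenick--paulsen2012}), together with the fact that $C(S^1)^{(n)}$ and $C(S^1)^{(m)}$ are finite-dimensional, yields that $q_{m,n}$ is a complete quotient map, so $C(S^1)^{(n)}\simeq C(S^1)^{(m)}/\ker q_{m,n}$ in the category $\mathfrak S_1$. The most delicate point will be bookkeeping through the duality in the second step to confirm that the dual of truncation really is the natural inclusion; the remaining checks (complete positivity of compression, pointwise characterisation of matrix-positivity in Fej\'er-Riesz systems, and the duality principle for complete quotients) are all by now standard.
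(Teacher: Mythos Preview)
Your proposal is correct and follows essentially the same route as the paper: the paper proves the inclusion $\iota_{n,m}:C(S^1)_{(n)}\hookrightarrow C(S^1)_{(m)}$ is a unital complete order embedding (via complete isometry rather than your pointwise-positivity argument, but these are equivalent), then identifies its dual with the truncation $q_{m,n}$ and invokes \cite[Proposition~1.15]{farenick--paulsen2012} to conclude $q_{m,n}$ is a complete quotient map. The only cosmetic difference is that you start from $q_{m,n}$ and dualise to reach $\iota$, whereas the paper starts from $\iota$ and dualises to reach $q_{m,n}$.
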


%%%%%%%%%%%%%%%%%%%%%%%%%%%%%
\section{Operator System Duality and Tensor Products}

As mentioned earlier, an operator system is a 
triple $(\osr, \{\mathcal C_n\}_{n\in\mathbb N}, e_\osr)$ consisting of a complex $*$-vector
space $\osr$, a family $\{\mathcal C_n\}_{n\in\mathbb N}$ of proper cones in the real vector spaces $\M_n(\osr)_{\rm sa}$
of selfadjoint matrices over $\osr$ in which the cones
satisfy $\alpha^*\mathcal C_n \alpha\subseteq \mathcal C_m$, for all $n,m\in\mathbb N$ 
and $n\times m$ complex matrices $\alpha$,
and a distinguished element $e_\osr\in \mathcal C_1$ that serves as an Archimedean order unit for the 
family $\{\mathcal C_n\}_{n\in\mathbb N}$ \cite{choi--effros1977,Paulsen-book}. Most often the cones $\mathcal C_n$
are denoted by $\M_n(\osr)_+$, with $\osr_+$ denoting $\mathcal C_1$, and $\osr$ is used to designate
the triple $(\osr, \{\mathcal C_n\}_{n\in\mathbb N}, e_\osr)$.

\begin{definition} An operator subsystem $\osr$ of an operator system $\oss$ is a unital linear subspace 
$\osr\subseteq\oss$, closed under the involution
of $\oss$, such that $\M_n(\osr)_+$ is defined to be
$\M_n(\osr)_+=\M_n(\osr)\cap\M_n(\oss)_+$, for every $n\in\mathbb N$, and $e_\osr=e_\oss$. 
\end{definition}

A \emph{unital linear complete order embedding} of an operator system $\osr$ into an operator system $\ost$ is a linear
injection $\phi:\osr\rightarrow\ost$ such that $\phi$ and the inverse map $\phi(\osr)\rightarrow\osr$ of
operator systems
are completely positive. If such a map $\phi$ is also surjective, then $\phi$ is a
\emph{unital linear complete order isomorphism} and we denote this relationship between $\osr$ and $\ost$ by
$\osr\simeq\ost$. We this notation, if $\osr$ and $\ost$ are operator systems and if $\phi:\osr\rightarrow\ost$
is a unital linear complete order embedding, then $\osr\simeq\phi(\osr)$.

If $\osr$ is a finite-dimensional operator system with dual space $\osr^d$, then define an $n\times n$
 matrix $\Phi=[\varphi_{ij}]_{i,j=1}^n$ of linear functionals $\varphi_{ij}\in\osr^d$ on $\osr$ to be positive if
 the linear map $\hat\Phi:\osr\rightarrow\M_n(\mathbb C)$, where $\hat\Phi(x)=[\varphi_{ij}(x)]_{i,j=1}^n$,
 is completely positive. The collection $\{\M_n(\osr^d)_+\}_{n\in\mathbb N}$ satisfies the 
 compatibility requirements for the matrix cones of an operator system.
 In selecting any faithful positive linear functional $\delta$ on $\osr$--by which is meant
 a positive linear functional $\delta$ for which $\delta(y)=0$, for $y\in\osr_+$, occurs only with $y=0$--an Archimedean
 order unit for the matrix ordering $\{\M_n(\osr^d)_+\}_{n\in\mathbb N}$ on $\osr^d$ is obtained, 
 providing $\osr^d$ with the structure of an operator system \cite{choi--effros1977}.
 
Turning to tensor products
 \cite{kavruk--paulsen--todorov--tomforde2011,kavruk--paulsen--todorov--tomforde2013},
if $(\osr, \{\mathcal P_n\}_{n\in\mathbb N}, e_\osr)$ and 
$(\ost, \{\mathcal Q_n\}_{n\in\mathbb N}, e_\ost)$ are operator systems, 
then an operator system tensor product structure on the algebraic tensor product
$\osr\otimes\ost$ is a family $\sigma=\{\mathcal C_n\}_{n\in\mathbb N}$ of 
cones $\mathcal C_n\subseteq \M_n(\osr\otimes\ost)$ such that:
\begin{enumerate}
\item[{(i)}] $(\osr\otimes\ost, \{\mathcal C_n\}_{n\in\mathbb N}, e_\osr\otimes e_\ost)$ is an operator system, denoted by $\osr\otimes_\sigma\ost$; 
\item[{(ii)}] $\mathcal P_{n_1}\otimes_{\rm sep} \mathcal Q_{n_2}\subseteq \mathcal C_{n_1n_2}$, for all $n_1,n_2\in\mathbb N$; and
\item[{(iii)}] for all $n_1,n_2\in\mathbb N$, the linear map 
$\phi\otimes\psi:\osr\otimes_\sigma\ost\rightarrow \M_{n_1}(\mathbb C)\otimes\M_{n_2}(\mathbb C)$ is a ucp map, 
whenever $\phi:\osr\rightarrow\M_{n_1}(\mathbb C)$ and $\psi:\ost\rightarrow\M_{n_2}(\mathbb C)$ are ucp maps.
\end{enumerate}

In item (iii) above, $\M_{n_1}(\mathbb C)\otimes\M_{n_2}(\mathbb C)$ is the 
unique C$^*$-algebra tensor product structure on the tensor product of the
unital C$^*$-algebras $\M_{n_1}(\mathbb C)$ and $\M_{n_2}(\mathbb C)$.

If $\otimes_\sigma$ and $\otimes_\tau$ are operator system tensor product 
structures on $\osr\otimes\ost$, then the notation
\[
\osr\otimes_\sigma\ost \subseteq_{{}_{+}} \osr\otimes_\tau\ost
\]
is used to indicate that the linear identity map $\iota(x)=x$ is a ucp map when considered as a linear map
$\iota:\osr\otimes_\sigma\ost \rightarrow \osr\otimes_\tau\ost$ of operator systems. If, in addition, the ucp map
$\iota$ is unital complete order isomorphism (that is, if the inverse of $\iota$ is completely positive), then 
we write
\[
\osr\otimes_\sigma\ost = \osr\otimes_\tau\ost.
\]

If $\osr_1$ and $\ost_1$ are operator subsystems of operator systems $\osr_2$ and $\ost_2$, and if 
$\otimes_\sigma$ and $\otimes_\tau$ are operator system tensor product structures on $\osr_1\otimes\ost_1$ and 
$\osr_2\otimes\ost_2$, respectively, then the notation
\[
\osr_1\otimes_\sigma\ost_1 \subseteq_{{}_{+}} \osr_2\otimes_\tau\ost_2
\]
is used to indicate that the canonical embedding $x\mapsto x$ is a ucp map when considered as a linear map
$\osr_1\otimes_\sigma\ost_1 \rightarrow\osr_2\otimes_\tau\ost_2$ of operator systems. If this ucp embedding is also a
complete order embedding, by which is meant that a matrix $x\in\M_p(\osr_1\otimes \ost_1)$ belongs to $\M_p(\osr_1\otimes_\sigma\ost_1)_+$
if and only if $x$ is an element of $\M_p(\osr_2\otimes_\tau\ost_2)_+$, for every $p\in\mathbb N$, then this situation is denoted by
\[
\osr_1\otimes_\sigma\ost_1 \subseteq_{\rm coi} \osr_2\otimes_\tau\ost_2.
\]

The following definitions were introduced in \cite{kavruk--paulsen--todorov--tomforde2011}.

\begin{definition} 
The \emph{minimal operator system tensor product}, $\omin$, 
of operator systems $\osr$ and $\ost$ is the operator system tensor product structure on the algebraic tensor product
$\osr\otimes\ost$ that is obtained by
declaring a matrix $x\in\M_p(\osr\otimes\ost)$ to be positive 
if $(\phi\otimes\psi)^{(p)}[x]$ is a positive element of $\M_p\left(\M_n(\mathbb C)\otimes\M_q(\mathbb C)\right)$, for every
$n,q\in\mathbb N$ and unital completely positive linear maps $\phi:\osr\rightarrow\M_n(\mathbb C)$
and $\psi:\ost\rightarrow\M_q(\mathbb C)$.
\end{definition}

\begin{definition}
The \emph{maximal operator system tensor product}, $\omax$, 
of operator systems $\osr$ and $\ost$ is the operator system tensor product structure 
on the algebraic tensor product $\osr\otimes\ost$ that is obtained by
declaring a matrix $x\in\M_p(\osr\otimes\ost)$ to be positive 
if, for each $\varepsilon>0$, there are $n,q\in\mathbb N$, $a\in \M_n(\osr)_+$, $b\in\M_q(\ost)_+$, and a linear map 
$\delta:\mathbb C^p\rightarrow\mathbb C^n\otimes\mathbb C^q$
such that
\[
\varepsilon(e_\osr\otimes e_\ost) + x= \delta^*(a\otimes b)\delta.
\]
\end{definition}

With respect to the notation established above, we have the following relationships, for all operator systems $\osr$ and $\ost$, and all 
operator system tensor product structures $\otimes_\sigma$ on $\osr\otimes\ost$:
\begin{equation}\label{e:ten rel}
\osr\omax\ost \subseteq_{{}_{+}} \osr\otimes_\sigma\ost\subseteq_{{}_{+}} \osr\omin\ost.
\end{equation}
Furthermore, as noted in \cite{kavruk--paulsen--todorov--tomforde2011}, if unital C$^*$-algebras $\A$ and $\B$ are 
considered as operator systems, then 
the operator system tensor products $\A\omin\B$ and $\A\omax\B$ are unitally completely order isomorphic 
to the image of $\A\otimes\B$ inside the minimal and
maximal C$^*$-algebraic tensor products of $\A$ and $\B$, respectively.

An alternative to the defining condition for membership in $(\osr\omax\ost)_+$ is given by the next result.

\begin{proposition}\label{omax2} If $\osr$ and $\ost$ are operator systems, then the following statements are equivalent for
$x\in\osr\otimes\ost$:
\begin{enumerate}
\item $x\in (\osr\omax\ost)_+$;
\item for every $\varepsilon>0$, there exist $N\in\mathbb N$, $G=[g_{ij}]_{i,j=1}^N\in \M_N( \osr)_+$, and $H=[h_{ij}]_{i,j=1}^N\in \M_N( \ost)_+$
such that
\[
x+\varepsilon(e_\osr\otimes e_\ost) =\sum_{i=1}^N\sum_{j=1}^N g_{ij}\otimes h_{ij}.
\]
\end{enumerate}
\end{proposition}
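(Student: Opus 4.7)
The plan is to establish the equivalence by directly translating between the two decomposition forms via the unnormalized maximally entangled vector $\sum_{k=1}^N e_k \otimes e_k$. The definition of $\omax$ at the base level uses a single-vector congruence $\delta^*(a \otimes b)\delta$, while statement (2) uses a sum $\sum_{i,j} g_{ij} \otimes h_{ij}$ of tensors of entries of positive matrices; these two forms encode the same bilinear data but are organized differently, and the entangled vector shuttles between them.

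For $(2) \Rightarrow (1)$, I would take $a := G$, $b := H$, $n = q := N$, and $\delta := \sum_{k=1}^N e_k \otimes e_k \in \mathbb{C}^N \otimes \mathbb{C}^N$. A direct expansion of $\delta^*(a \otimes b)\delta$ immediately produces $\sum_{i,j=1}^N g_{ij} \otimes h_{ij}$, which, together with the hypothesis, supplies the required $\omax$-decomposition of $x + \varepsilon(e_\osr \otimes e_\ost)$.

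For $(1) \Rightarrow (2)$, I would expand $\delta \in \mathbb{C}^n \otimes \mathbb{C}^q$ as $\sum_{k,l} V_{kl}\, e_k \otimes e_l$, set $N := nq$, and index rows and columns of $N \times N$ matrices by pairs $(k,l)$. Then define $G \in \M_N(\osr)$ and $H \in \M_N(\ost)$ by $g_{(k,l),(k',l')} := \overline{V_{kl}} V_{k'l'}\, a_{kk'}$ and $h_{(k,l),(k',l')} := b_{ll'}$. The identity $\sum_{I,J} g_{IJ} \otimes h_{IJ} = \delta^*(a \otimes b)\delta$ follows by inspection. Positivity of $H$ is immediate from the factorization $H = J_n \otimes b$, where $J_n$ is the rank-one positive all-ones matrix. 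Positivity of $G$ is checked by computing $z^* G z$ for a scalar vector $z = (z_{k,l})$: the result equals $w^* a w \in \osr_+$ with $w_k := \sum_l z_{(k,l)} V_{kl}$, which lies in $\osr_+$ by the positivity of $a$.

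I do not foresee any serious obstacle in this argument; it amounts essentially to a reindexing combined with the standard maximally entangled vector device familiar from Choi-type arguments in the theory of completely positive maps. The only care required lies in the bookkeeping for the double index $(k,l)$ and in confirming positivity of the auxiliary matrix $G$, both of which are routine. An alternative presentation would phrase this as the observation that the cone of elements of the form $\delta^*(a \otimes b)\delta$ and the cone of matrix-indexed sums $\sum_{i,j} g_{ij} \otimes h_{ij}$ coincide, but the direct vector argument above appears simpler.
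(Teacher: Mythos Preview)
Your $(2)\Rightarrow(1)$ direction coincides with the paper's: both compress $G\otimes H$ against the vector $\sum_k e_k\otimes e_k$ to recover $\sum_{i,j}g_{ij}\otimes h_{ij}$, which then matches the defining form for $(\osr\omax\ost)_+$.

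Your $(1)\Rightarrow(2)$ is genuinely different and more self-contained. The paper does not unpack $\delta^*(a\otimes b)\delta$ directly; instead it observes that $y=x+\varepsilon(e_\osr\otimes e_\ost)$ is \emph{strictly} positive in $\osr\omax\ost$ (since $y-\tfrac{\varepsilon}{2}(e_\osr\otimes e_\ost)$ remains positive) and then invokes \cite[Lemma~2.7]{farenick--kavruk--paulsen--todorov2014} as a black box to produce the required $G$ and $H$. Your explicit reindexing avoids that external reference, and the construction is correct, but the stated justification of the positivity of $G$ has a gap. Showing $z^*Gz\in\osr_+$ for every \emph{scalar} vector $z\in\mathbb C^N$ is not a valid criterion for $G\in\M_N(\osr)_+$; this is only block-positivity, which for a general operator system (take $\osr=\M_2(\mathbb C)$ and $N=2$) is strictly weaker than membership in the matricial cone $\mathcal C_N$. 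The repair is immediate and already latent in your computation: if $W$ is the scalar $n\times N$ matrix with entries $W_{r,(k,l)}=\delta_{rk}V_{kl}$, then one checks entrywise that $G=W^*aW$, and $G\in\M_N(\osr)_+$ follows from the operator-system compatibility axiom $W^*\M_n(\osr)_+\,W\subseteq\M_N(\osr)_+$. With this correction your argument is complete and arguably cleaner than the paper's citation-based route.
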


\begin{proof} Select $x\in (\osr\omax\ost)_+$, and let $\varepsilon>0$ and set $\delta=\varepsilon/2$. The elements 
$y=x+\varepsilon(e_\osr\otimes e_\ost)$ and $y-\delta(e_\osr\otimes e_\ost)$ also belong to $(\osr\omax\ost)_+$, as 
$(\osr\omax\ost)_+$ is a cone. The condition $y-\delta(e_\osr\otimes e_\ost)\in(\osr \omax\ost)_+$ implies $y$ is a strictly
positive element of $(\osr\omax\ost)_+$. Thus, by \cite[Lemma 2.7]{farenick--kavruk--paulsen--todorov2014}, 
there exist $N\in\mathbb N$ and matrices $ [g_{ij}]_{i,j=1}^N\in \M_N( \osr)_+$  and $ [h_{ij}]_{i,j=1}^N\in \M_N( \ost)_+$
such that $y=\displaystyle\sum_{i=1}^N\sum_{j=1}^N g_{ij}\otimes h_{ij}$.

Conversely, let $\varepsilon>0$. By assumption,  
there exist $N\in\mathbb N$ and matrices $ G=[g_{ij}]_{i,j=1}^N\in \M_N( \osr)_+$  and $H= [h_{ij}]_{i,j=1}^N\in \M_N( \ost)_+$
such that $x+\varepsilon(e_\osr\otimes e_\ost)=\displaystyle\sum_{i=1}^N\sum_{j=1}^N g_{ij}\otimes h_{ij}$. 
Consider the vector $\xi\in\mathbb C^N\otimes\mathbb C^N$ given by 
$\xi=\displaystyle\sum_{i=1}^N\displaystyle\sum_{j=1}^N
f_i\otimes f_j$, where $\{f_1,\dots, f_N\}$ are the canonical orthonormal basis vectors for $\mathbb C^N$. Hence, for the
linear map $\alpha:\mathbb C\rightarrow\mathbb C^N\otimes\mathbb C^N$ given by $\alpha(\zeta)=\zeta\xi$, for $\zeta\in\mathbb C$,
we have
\[
\alpha^*(G\otimes H)\alpha= \displaystyle\sum_{i=1}^N\sum_{j=1}^N g_{ij}\otimes h_{ij} = x+\varepsilon(e_\osr\otimes e_\ost).
\]
As $\varepsilon>0$ is arbitrary, we deduce $x\in(\osr\omax\ost)_+$.
\end{proof}

The following theorem is established in \cite{farenick--paulsen2012}. 

\begin{theorem}[Tensor Duality]\label{tensor duality} If $\osr$ and $\ost$ are finite-dimensional operator systems, then
\[
\left(\osr\omin\ost\right)^d = \osr^d\omax \ost^d .
\]
\end{theorem}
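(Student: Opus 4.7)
The plan is to verify the operator system identity by showing that the canonical finite-dimensional linear isomorphism $\eta : \osr^d \otimes \ost^d \to (\osr \otimes \ost)^d$, defined on simple tensors by $\eta(\varphi \otimes \psi)(x \otimes y) = \varphi(x)\psi(y)$, sends the matrix positive cones of $\osr^d \omax \ost^d$ bijectively onto those of $(\osr \omin \ost)^d$ at every level $p \in \mathbb{N}$. Unitality is immediate once a faithful state is chosen on each side, so the substance of the argument lies in the identification of cones at each $p$.

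For the inclusion $\osr^d \omax \ost^d \subseteq_{{}_{+}} (\osr \omin \ost)^d$, I would take $\Omega \in \M_p(\osr^d \omax \ost^d)_+$ and invoke the matrix-level definition of $\omax$: for each $\varepsilon > 0$ there exist $N, M \in \mathbb{N}$, positive matrices $G \in \M_N(\osr^d)_+$ and $H \in \M_M(\ost^d)_+$, and a linear map $\delta : \mathbb{C}^p \to \mathbb{C}^N \otimes \mathbb{C}^M$ with $\Omega + \varepsilon\, \mathbf{1} = \delta^*(G \otimes H)\delta$. Positivity of $G$ and $H$ means the associated maps $\hat G : \osr \to \M_N(\mathbb{C})$ and $\hat H : \ost \to \M_M(\mathbb{C})$ are completely positive, and by the very definition of $\omin$ the tensor product $\hat G \otimes \hat H : \osr \omin \ost \to \M_{NM}(\mathbb{C})$ is also completely positive. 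Composing with the ampliation of $\delta^*$ and $\delta$ and letting $\varepsilon \to 0$ shows that the map $\hat \Omega : \osr \omin \ost \to \M_p(\mathbb{C})$ corresponding to $\Omega$ via $\eta$ is completely positive, placing $\Omega$ in $\M_p((\osr \omin \ost)^d)_+$.

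For the reverse inclusion, suppose $\Omega \in \M_p((\osr \omin \ost)^d)_+$, equivalently that the induced map $\hat \Omega : \osr \omin \ost \to \M_p(\mathbb{C})$ is completely positive. I would argue by contradiction: if $\Omega \notin \M_p(\osr^d \omax \ost^d)_+$, then, since the latter is a closed proper cone in the finite-dimensional real space $\M_p(\osr^d \otimes \ost^d)_{\mathrm{sa}}$, Hahn-Banach separation produces a self-adjoint $X \in \M_p(\osr \otimes \ost)_{\mathrm{sa}}$ with $\langle X, Z \rangle \geq 0$ for every $Z \in \M_p(\osr^d \omax \ost^d)_+$, yet $\langle X, \Omega \rangle < 0$, under the natural trace pairing between $\M_p(\osr \otimes \ost)$ and $\M_p((\osr \otimes \ost)^d) \cong \M_p(\osr^d \otimes \ost^d)$. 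Testing $X$ against elements $\delta^*(G \otimes H)\delta$ with $G \in \M_N(\osr^d)_+$ and $H \in \M_M(\ost^d)_+$, and translating through the duality between matrix cones in $\osr^d$ and cp maps into $\M_N(\mathbb{C})$, yields $(\hat G \otimes \hat H)^{(p)}(X) \geq 0$ for all $N, M$ and all cp $\hat G, \hat H$. By the matrix-level definition of $\omin$, this forces $X \in \M_p(\osr \omin \ost)_+$, whence complete positivity of $\hat\Omega$ gives $\langle X, \Omega \rangle = \tr\bigl(\hat\Omega^{(p)}(X)\bigr) \geq 0$, contradicting the separation.

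The main obstacle is the reverse direction, specifically the translation of Hahn-Banach positivity of $\langle X, \cdot\rangle$ on the $\omax$-cone in $\M_p(\osr^d \otimes \ost^d)$ into $\omin$-positivity of $X$ itself. This translation rests on the bijection between cp maps $\osr \to \M_N(\mathbb{C})$ and positive matrices in $\M_N(\osr^d)$ that underlies the dual operator system structure, so that testing against the relevant cp maps in the definition of $\omin$ coincides with testing against the elementary generators $G \otimes H$ that build the $\omax$-cone on the dual side. Tracking the various matrix levels and bilinear pairings carefully is essentially all that remains after this key identification.
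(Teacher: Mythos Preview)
The paper does not give its own proof of this theorem; it simply records the statement and cites \cite{farenick--paulsen2012} for the proof. There is therefore nothing in the paper to compare your argument against.

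That said, your outline is the standard route to this result and is essentially correct. The forward inclusion is exactly as you describe: positivity of $G\in\M_N(\osr^d)$ and $H\in\M_M(\ost^d)$ is, by definition of the dual operator system, complete positivity of $\hat G:\osr\to\M_N(\mathbb{C})$ and $\hat H:\ost\to\M_M(\mathbb{C})$, and functoriality of $\omin$ for completely positive maps then makes $\hat G\otimes\hat H$ completely positive on $\osr\omin\ost$; compressing by $\delta$ and letting $\varepsilon\to0$ finishes that direction. For the reverse inclusion your Hahn--Banach argument is the right idea, and it works because in finite dimensions the $\omax$ matrix cones are closed (they are the positive cones of an operator system, hence Archimedean-closed). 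The only point that requires genuine care is the one you already flag: verifying that nonnegativity of $\langle X,\delta^*(G\otimes H)\delta\rangle$ for all $G,H,\delta$ is exactly the statement $(\hat G\otimes\hat H)^{(p)}(X)\geq0$ for all completely positive $\hat G,\hat H$, i.e., the $\omin$-positivity of $X$ at level $p$. This identification amounts to matching the trace pairing on $\M_p(\osr\otimes\ost)\times\M_p(\osr^d\otimes\ost^d)$ with evaluation of the associated $\M_p$-valued map on the entries of $X$, and it goes through once a consistent convention is fixed.
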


Turning now to tensor cones, ignoring
the matrix cones $\M_n(\osr\otimes_\sigma\ost)_+$ for all $n\ge2$, the next result creates the setting for the study herein.

\begin{proposition}\label{tc} For any finite-dimensional operator systems $\osr$ and $\ost$, and any operator system tensor product structure
$\otimes_\sigma$ on $\osr\otimes\ost$, the cone $(\osr\otimes_\sigma\ost)_+$ is a tensor cone for $\osr_+$ and $\ost_+$.
\end{proposition}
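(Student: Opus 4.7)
The proof is essentially an unpacking of the three defining properties of an operator system tensor product structure, so my plan is to handle the two inclusions separately and rely directly on properties (ii) and (iii) in turn.

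For the first inclusion $\osr_+\otimes_{\rm sep}\ost_+ \subseteq (\osr\otimes_\sigma\ost)_+$, I would take any separable element $u=\sum_{j=1}^k a_j\otimes b_j$ with $a_j\in\osr_+$ and $b_j\in\ost_+$. Property (ii) of an operator system tensor product structure, applied at the level $n_1=n_2=1$, says precisely that $\osr_+\otimes_{\rm sep}\ost_+ = \mathcal P_1 \otimes_{\rm sep}\mathcal Q_1 \subseteq \mathcal C_1 = (\osr\otimes_\sigma\ost)_+$, so the first inclusion is immediate from the definition.

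For the second inclusion $(\osr\otimes_\sigma\ost)_+\subseteq \osr_+\otimes_{{\rm sep}^*}\ost_+$, I would fix $x\in(\osr\otimes_\sigma\ost)_+$ and arbitrary $\phi\in(\osr^d)_+$, $\psi\in(\ost^d)_+$, and show $(\phi\otimes\psi)(x)\geq 0$. The key observation is that a positive linear functional on an operator system is, up to a nonnegative scalar, a state: if $\phi(e_\osr)=0$, then since every self-adjoint $y$ satisfies $-\|y\|e_\osr\leq y\leq \|y\|e_\osr$, positivity of $\phi$ forces $\phi(y)=0$ for all $y\in\osr_{\rm sa}$, hence $\phi=0$ on $\osr$; and if $\phi(e_\osr)>0$ then $\tilde\phi := \phi/\phi(e_\osr)$ is a unital positive, i.e.\ ucp, linear functional $\osr\to\mathbb C=\M_1(\mathbb C)$. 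If either $\phi=0$ or $\psi=0$ the conclusion is trivial, so I may assume both are strictly positive on their respective units. Then property (iii) of the tensor product structure, with $n_1=n_2=1$ and $\mathbb C$ identified with $\M_1(\mathbb C)$, says $\tilde\phi\otimes\tilde\psi:\osr\otimes_\sigma\ost\to\mathbb C$ is ucp, hence positive, so $(\tilde\phi\otimes\tilde\psi)(x)\geq 0$. Rescaling by the positive scalar $\phi(e_\osr)\psi(e_\ost)$ yields $(\phi\otimes\psi)(x)\geq 0$, which is exactly the defining condition for membership in $\osr_+\otimes_{{\rm sep}^*}\ost_+$ as recorded in Notation \ref{notation cmplx}.

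Since nothing here is deep, I do not anticipate a real obstacle; the only mild subtlety is the reduction from an arbitrary positive functional to a state, which requires noting that positive functionals annihilating the order unit must vanish entirely. Everything else is a direct reading of the axioms of an operator system tensor product together with the complex-versus-real identification of $(\osr^d)_+$ with the dual of the cone $\osr_+$ explained in the discussion preceding Notation \ref{notation cmplx}.
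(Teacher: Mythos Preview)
Your proof is correct. The second inclusion is handled essentially the same way in the paper: normalise the positive functionals to states, observe that states are ucp maps into $\M_1(\mathbb C)$, and conclude. The paper phrases this as passing first through $(\osr\omin\ost)_+$ and then invoking the definition of $\omin$, but the content is identical to your direct appeal to axiom (iii) at level $n_1=n_2=1$.

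The first inclusion is where the two arguments genuinely diverge. You read it straight off axiom (ii) with $n_1=n_2=1$, which is the cleanest possible route. The paper instead proves the stronger statement that every separable element lies in $(\osr\omax\ost)_+$: given $x=\sum_j a_j\otimes b_j$, it builds diagonal matrices $P=\operatorname{diag}(a_0,\dots,a_k)$ and $Q=\operatorname{diag}(b_0,\dots,b_k)$ and a compression $\alpha$ so that $\alpha^*(P\otimes Q)\alpha=x+\varepsilon(e_\osr\otimes e_\ost)$, then uses $(\osr\omax\ost)_+\subseteq(\osr\otimes_\sigma\ost)_+$. This detour is unnecessary for the proposition as stated, but it does exhibit concretely why separable elements satisfy the $\omax$ criterion, which is of some independent interest later in the paper. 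Your argument is shorter and entirely sufficient for what is being claimed.
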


\begin{proof} 
Suppose that $x\in \osr_+\otimes_{\rm sep}\ost_+$; thus, there exist $k\in\mathbb N$, $a_j\in\osr_+$, and $b_j\in\ost_+$ such that
$x=\displaystyle\sum_{j=1}^k a_j\otimes b_j$. Let $\varepsilon>0$ and consider $\varepsilon(e_\osr\otimes e_\ost)+x$. Set 
$a_0=\varepsilon e_\osr$ and $b_0=e_\ost$ so that 
\[
\varepsilon(e_\osr\otimes e_\ost)+x=\sum_{j=0}^k a_j\otimes b_j.
\]
Let $P=\displaystyle\sum_{p=0}^k a_p\otimes e_{pp}$ and $Q=\displaystyle\sum_{q=0}^k a_q\otimes e_{qq}$, which are positive
matrices in $\M_{k+1}(\osr)$ and $\M_{k+1}(\ost)$, respectively, and let $\xi\in \mathbb C^{k+1}\otimes\mathbb C^{k+1}$ be the
vector $\xi=\displaystyle\sum_{j=0}^k f_j\otimes f_j$, where $\{f_0,\dots,f_k\}$ denotes the canonical orthonormal basis for $\mathbb C^{k+1}$.
Hence, if $\alpha:\mathbb C\rightarrow \mathbb C^{k+1}\otimes\mathbb C^{k+1}$ is the linear map $\alpha(\zeta)=\zeta\xi$, for 
$\zeta\in\mathbb C$, then 
\[
\alpha^*(P\otimes Q)\alpha= \sum_{j=0}^k a_j\otimes b_j = \varepsilon(e_\osr\otimes e_\ost)+x,
\]
which proves that $x \in (\osr\omax\ost)_+$.
Thus,
\[
\osr_+\otimes_{\rm sep}\ost_+\subseteq (\osr\omax\ost)_+ \subseteq (\osr\otimes_\sigma\ost)_+,
\]
where the second inclusion is by virtue of the inclusion sequence (\ref{e:ten rel}).

Consider now an element $x\in (\osr\omin\ost)_+$, 
and suppose that $\varphi:\osr\rightarrow\mathbb C$ and $\vartheta:\ost\rightarrow\mathbb C$
are positive linear functionals. Without loss of generality, we may assume that have been normalised to be unital.
As unital positive linear functionals are ucp maps, $(\varphi\otimes\vartheta)[x]$ is positive in $\mathbb C\otimes\mathbb C=\mathbb C$, by definition
of the minimal operator system tensor product. In other words, $x\in \osr_+\otimes_{{\rm sep}^*}\ost_+$. Thus,
\[
(\osr\otimes_\sigma\ost)_+ \subseteq (\osr\omin\ost)_+ \subseteq \osr_+\otimes_{{\rm sep}^*}\ost_+,
\]
where the first inclusion is, again, by virtue of the inclusion sequence (\ref{e:ten rel}).

Hence, $\osr_+\otimes_{\rm sep}\ost_+ \subseteq (\osr\otimes_\sigma\ost)_+ \subseteq \osr_+\otimes_{{\rm sep}^*}\ost_+$, which proves that 
$(\osr\otimes_\sigma\ost)_+$ is a tensor cone for $\osr_+$ and $\ost_+$.
\end{proof}

%%%%%%%%%%%%%%%%%%%%%%%%%%%%%%
\section{Toeplitz and Fej\'er-Riesz Operator Systems}

\subsection{Canonical linear bases}

The operator systems under study in this paper are the Toeplitz and Fej\'er-Riesz operator systems $C(S^1)^{(n)}$
and $C(S^1)_{(n)}$, for $n\geq 2$. The operator system structure on the
Toeplitz matrices arises from considering $C(S^1)^{(n)}$ as an operator subsystem of the unital C$^*$-algebra
$\M_n(\mathbb C)$, while the operator system structure on the trigonometric polynomials arises from considering
$C(S^1)_{(n)}$ as an operator subsystem of the unital abelian C$^*$-algebra $C(S^1)$ of continuous complex-valued
functions on the unit circle $S^1$. Thus, the Archimedean order unit for $C(S^1)^{(n)}$ is the identity, and for
$C(S^1)_{(n)}$ it is the constant function $z\mapsto 1$.

The functions $\chi_\ell:S^1\rightarrow\mathbb C$ given by $\chi_\ell(z)=z^\ell$, for $\ell=-n+1,\dots,n-1$,
form a linear basis for $C(S^1)_{(n)}$, with the canonical Archimedean order unit for the operator system 
$C(S^1)_{(n)}$ being the constant function $\chi_0$. It is sometimes useful to note
that the operator system $C(S^1)_{(n)}$ 
is an operator subsystem of $C(S^1)_{(m)}$, whenever $m\geq n$. Finally, if elements $t_\ell$ 
are selected from an operator system $\ost$, then the element
$x=\displaystyle\sum_{\ell=-n+1}^{n-1} \chi_\ell\otimes t_\ell$ in the 
vector space $C(S^1)_{(n)}\otimes\ost$ has a natural representation as 
a function $S^1\rightarrow\ost$ of the form $z\mapsto \displaystyle\sum_{\ell=-n+1}^{n-1} z^\ell  t_\ell$.
 
Turning to the Toeplitz operator system,
the canonical linear basis for $C(S^1)^{(n)}$ is $\{r_\ell\}_{\ell=-n+1}^{n-1}$, where
\[
r_\ell\;=\; \left\{
       \begin{array}{lcl}
           s^\ell   &:\;&      \mbox{if }\ell\geq0  \\
            (s^*)^\ell          &:\;&       \mbox{if }\ell < 0
      \end{array}
      \right\}, \mbox{ for }\ell=0,1,\dots,n-1,
\]
and $s\in \M_n(\mathbb C)$ is the shift matrix
\[
s= \left[\begin{array}{ccccc} 0&&&&  \\ 1&0&&&  \\ &1&0&&  \\ &&\ddots&\ddots& \\ &&&1&0 \end{array}\right].
\]
Thus, if elements $t_\ell$ are selected from an operator system $\ost$, then the element
$x=\displaystyle\sum_{\ell=-n+1}^{n-1} r_\ell\otimes t_\ell$ in the 
vector space $C(S^1)^{(n)}\otimes\ost$ has a natural representation
as a matrix with entries from $\ost$: namely,
\begin{equation}\label{e:bt}
x= \left[\begin{array}{cccc} t_0 & t_{-1}  & \dots & t_{-n+1} \\ t_1 & t_0 & \ddots & \vdots \\
\vdots & \ddots & \ddots & t_{-1} \\ t_{n-1}&\dots &t_1&t_0 \end{array}\right]
\end{equation}
The canonical Archimedean order unit for the operator system $C(S^1)^{(n)}$ is
the identity matrix, $r_0$.

It is common in the matrix theory literature to refer to matrices such as those in (\ref{e:bt})
as \emph{block Toeplitz matrices} when the elements $t_\ell$ are not complex numbers.

%%%%%%%%%%%%%%%%%%%%%%%%%%%%%
\subsection{Generalised circulants}

While the Toeplitz operator system is far from being closed under multiplication, it has numerous operator subsystems that are
abelian C$^*$-algebras.
For $\theta\in\mathbb R$, let $u_\theta$ be the unitary Toeplitz matrix
\[
u_\theta=r_1+e^{i\theta}r_{-n+1} = \left[\begin{array}{cccc} 0 & \dots & 0 & e^{i\theta} \\ 1 & 0 & \ddots & 0 \\
\vdots & \ddots & \ddots & \vdots \\ 0&\dots &1&0 \end{array}\right].
\]
The minimal annihilating polynomial for $u_\theta$ is $z^n-e^{i\theta}$, which has $n$ distinct roots in $S^1$, and,
for any complex polynomial $g(z)=\displaystyle\sum_{k=0}^{n-1}\alpha_kz^k$, the matrix
\[
g(u_\theta)= \left[\begin{array}{cccc} \alpha_0 & \alpha_{n-1}e^{i\theta} & \dots & \alpha_1 e^{i\theta} \\ \alpha_1 & \alpha_0 & \ddots & \vdots \\
\vdots & \ddots & \ddots & \alpha_{n-1}e^{i\theta} \\ \alpha_{n-1}&\dots &\alpha_1&\alpha_0 \end{array}\right]
\]
is normal and Toeplitz. Moreover, the operator systems
\[
C^{n,\theta}=\left\{g(u_\theta)\,|\, g\in\mathbb C[z]\right\},
\]
for $\theta\in\mathbb R$, are unital abelian C$^*$-subalgebras of $C(S^1)^{(n)}$, yielding 
the algebra of circulant matrices, when $\theta=0$,
and the algebra of skew-circulant matrices, when $\theta=\pi$. Elements of $C^{n,\theta}$ are called \emph{generalised circulants}.
Note that, by the Spectral Theorem, the positive cone of $C^{n,\theta}$ is affinely homeomorphic to the nonnegative orthant of 
$\mathbb R^n$; hence, $\left(C^{n,\theta}\right)_+$ is a simplicial cone. 
By diagonalising matrices in $C^{n,\theta}$ via a single unitary
matrix $U_\theta$, for each $\theta\in\mathbb R$, we have the following isomorphisms in the category $\mathfrak S_1$:
\begin{equation}\label{e:circ}
C^{n,\theta}  \simeq \cstar(\mathbb Z_n),
\end{equation}
for all $\theta\in\mathbb R$, where $\cstar(\mathbb Z_n)$ is the group C$^*$-algebra 
of the finite abelian group $\mathbb Z_n$.  

As noted in \cite[\S5]{connes-vansuijlekom2021}, the operator systems $C^{n,\theta}$ are self-dual and the Toeplitz operator system
$C(S^1)^{(n)}$ is the (completely positive) truncation of the circulant operator system $C^{2n-1,\theta} $, for $\theta=0$, to the $n\times n$ upper-left corner. Unfortunately,
this completely positive map does not send the positive cone of $C^{2n-1,\theta} $, for $\theta=0$, onto the positive cone of $C(S^1)^{(n)}$.
For example, the positive Toeplitz matrix $\left[\begin{array}{cc} 1 & \zeta^{-1} \\ \zeta & 1 \end{array}\right]$, for $z \in S^1$, is the upper $2\times 2$ corner of a 
$3\times 3$ positive circulant matrix if and only if $\zeta$ is a cube root of unity.

%%%%%%%%%%%%%%%%%%%%%%%%%%%%
\subsection{The matrices $R_n$ and $T_n$}

\begin{definition} 
$R_n\in C(S^1)^{(n)}\otimes C(S^1)^{(n)}$ and
$T_n\in C(S^1)^{(n)}\otimes C(S^1)_{(n)}$ are the matrices given by
 \[
R_n =\sum_{\ell=-n+1}^{n-1} r_\ell\otimes r_{\ell}
\,\mbox{ and }\,
T_n =\sum_{\ell=-n+1}^{n-1} r_\ell\otimes \chi_{\ell}.
\]
The matrix $T_n$ is called the \emph{universal positive $n\times n$ Toeplitz matrix}.
\end{definition}

It is evident $T_n$ is positive. To show $R_n$ is positive, one need only note that
$R_n$ coincides with the matrix designated by (*) on page 36 of \cite{Paulsen-book}, and it is shown 
in \cite{Paulsen-book} that the matrix (*) is positive. 

In addition to being positive, the matrix $R_n$ separable (see Proposition \ref{R_n is separable}), while $T_n$ entangled \cite[Corollary 7.7]{farenick2021}.

An important result of Choi \cite{choi1972} shows that the complete positivity of a linear map $\phi$ of the matrix algebra $\M_n(\mathbb C)$ to an operator system $\ost$
can be confirmed by determining whether a single matrix, namely the \emph{Choi matrix} $C_\phi=\displaystyle\sum_{i,j} e_{ij}\otimes\phi(e_{ij})$, is positive in 
$\M_n(\mathbb C)\omin \ost$.
The following theorem shows how the matrix $T_n$ functions in a similar way for linear maps on $C(S^1)_{(n)}$.

\begin{proposition}\label{pos-cp} Let $\ost$ be an operator system.
\begin{enumerate}
\item Every positive linear map $\phi:C(S^1)^{(n)}\rightarrow\ost$ is completely positive.
\item The following statements are equivalent for a linear map $\phi:C(S^1)_{(n)}\rightarrow\ost$:
    \begin{enumerate}
    \item the matrix $\displaystyle\sum_{\ell=-n+1}^{n-1}r_\ell\otimes\phi(\chi_\ell)$ is positive in $C(S^1)^{(n)}\omin C(S^1)_{(n)}$;
    \item $\phi$ is completely positive.
    \end{enumerate}
\end{enumerate}
\end{proposition}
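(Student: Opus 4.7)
For part (1), my plan is to invoke the matricial Carath\'eodory--Toeplitz theorem: any positive $X \in M_p(C(S^1)^{(n)})_+$ admits an integral representation $X = \int_{S^1}(v_z v_z^{*}) \otimes dE(z)$, where $v_z = (1,z,\dots,z^{n-1})^T$ and $E$ is a positive $M_p(\mathbb C)$-valued Borel measure on the unit circle. Pulling $\phi$ through the integral by linearity and continuity gives
\[
\phi^{(p)}(X) = \int_{S^1} \phi(v_z v_z^{*}) \otimes dE(z),
\]
which lies in $M_p(\ost)_+$ because each integrand is a positive elementary tensor (positivity of $\phi$ ensures $\phi(v_z v_z^{*}) \in \ost_+$) integrated against a positive $M_p(\mathbb C)$-valued measure. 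An equivalent route is to note that, since $C(S^1)_{(n)}$ sits inside the abelian C$^*$-algebra $C(S^1)$, its matricial cones are determined by pointwise positivity, so by Theorem~\ref{toeplitz duality} the dual $C(S^1)^{(n)}$ inherits the dual (smallest) matricial structure, and every positive map out of such a system is automatically completely positive.

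For the implication (b)$\Rightarrow$(a) of part (2), the reasoning is brief: $T_n(z) = v_z v_z^{*}$ is pointwise positive on $S^1$, hence $T_n \in M_n(C(S^1)_{(n)})_+$. Because $M_n(\mathbb C)$ is nuclear, the tensor products $M_n(\mathbb C) \omin C(S^1)_{(n)}$ and $M_n(\mathbb C) \omin \ost$ coincide with the usual spaces of matrices over these operator systems, and complete positivity of $\phi$ makes $\mathrm{id}_n \otimes \phi$ a ucp map that sends $T_n$ to exactly the matrix displayed in (a).

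The converse (a)$\Rightarrow$(b) uses the matricial Fej\'er--Riesz factorisation: every $F \in M_p(C(S^1)_{(n)})_+$ decomposes as $F(z) = G(z) G(z)^{*}$ with $G(z) = \sum_{k=0}^{n-1} z^{k} G_k$ and $G_k \in M_p(\mathbb C)$. Packaging the adjoints $\{G_k^{*}\}$ into a scalar block-column $V \in M_{np,p}(\mathbb C)$ and matching the indexing of $T_n = [\chi_{i-j}]$, a direct calculation produces the identity $F = V^{*}(T_n \otimes I_p) V$ in $M_p(C(S^1)_{(n)})$. Because $V$ has scalar entries, applying $\phi^{(p)}$ commutes with this conjugation, giving
\[
\phi^{(p)}(F) = V^{*}\bigl((\mathrm{id}_n \otimes \phi)(T_n) \otimes I_p\bigr) V,
\]
which is positive in $M_p(\ost)$ by hypothesis (a) together with the fact that scalar conjugation preserves positivity. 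The main technical obstacle is the indexing bookkeeping in the Fej\'er--Riesz step---one must choose the convention $F = GG^{*}$ and set $V_k = G_k^{*}$ so that $\sum_{i-j=\ell} V_i^{*} V_j$ reproduces the coefficient $F_\ell = \sum_{i-j=\ell} G_i G_j^{*}$ of $\chi_\ell$---after which both directions of (2) follow by formal manipulation, and part (1) is handled by the cited integral representation.
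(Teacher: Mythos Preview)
Your proof is correct but takes a genuinely different route from the paper's. For part (1) the paper simply cites \cite[Lemma 2.5]{farenick2021}, and for the nontrivial direction (a)$\Rightarrow$(b) of part (2) it invokes the \emph{universal property} of $T_n$ from \cite[Theorem 7.7]{farenick2021}: given the positivity hypothesis, that theorem manufactures a completely positive map $\psi:C(S^1)_{(n)}\rightarrow\cstare(\ost)$ with $\psi(\chi_\ell)=\phi(\chi_\ell)$ for all $\ell$, whence $\psi=\phi$ and $\phi$ is completely positive. Your argument instead unpacks both external citations using classical factorisation theorems: the matricial Carath\'eodory--Toeplitz representation (equivalently Gurvits' separability, Theorem~\ref{sep2} of this paper) to handle part (1), and the matricial Fej\'er--Riesz factorisation $F=V^*(T_n\otimes I_p)V$ with scalar $V$ to handle (a)$\Rightarrow$(b). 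What your approach buys is self-containment---it makes transparent \emph{why} $T_n$ controls complete positivity, namely because every positive matrix over $C(S^1)_{(n)}$ is a scalar compression of an amplification of $T_n$---whereas the paper's approach is terser and situates the result within the universal-property framework developed in \cite{farenick2021}. One small refinement: in part (1), since Gurvits' theorem (or Carath\'eodory--Toeplitz in finite dimensions) already gives a \emph{finite} decomposition $X=\sum_j T_n(\lambda_j)\otimes b_j$, you can avoid the integral and any attendant continuity issues in a general target $\ost$ by working with that finite sum directly.
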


\begin{proof}
The assertion that positive linear maps $\phi:C(S^1)^{(n)}\rightarrow\ost$ are completely positive is established in \cite[Lemma 2.5]{farenick2021}.

For the second assertion, because 
\[
C(S^1)^{(n)}\omin C(S^1)_{(n)} \subseteq_{\rm coi} \M_n(\mathbb C)\omin C(S^1)_{(n)},
\]
it is clear, by definition of complete positivity, that  $\displaystyle\sum_{\ell=-n+1}^{n-1}r_\ell\otimes\phi(\chi_\ell)$ is positive in $C(S^1)^{(n)}\omin C(S^1)_{(n)}$,
if $\phi$ is completely positive.

Conversely, suppose $\phi:C(S^1)^{(n)}\rightarrow\ost$ is a linear map for which the matrix 
$\displaystyle\sum_{\ell=-n+1}^{n-1}r_\ell\otimes\phi(\chi_\ell)$ is positive in $C(S^1)^{(n)}\omin \ost$. Again, using the fact that
\[
C(S^1)^{(n)}\omin \ost \subseteq_{\rm coi} \M_n(\mathbb C)\omin \cstare(\ost),
\]
we may consider $\phi$ to be a linear map $\phi:C(S^1)^{(n)}\rightarrow\cstare(\ost)$ for which the matrix
$\displaystyle\sum_{\ell=-n+1}^{n-1}r_\ell\otimes\phi(\chi_\ell)$ is positive in $C(S^1)^{(n)}\omin \cstare(\ost)$, Thus, by
the universal property of $T_n$ \cite[Theorem 7.7]{farenick2021}, there is a completely positive linear map
$\psi:C(S^1)_{(n)}\rightarrow\cstare(\ost)$ such that $\psi(\chi_\ell)=\phi(\chi_\ell)$, for every $\ell$. Hence, because
$\psi$ and $\phi$ agree on a linear basis for $C(S^1)_{(n)}$, they must be equal, which implies that $\phi$
is completely positive.
\end{proof}

\begin{corollary} If $\phi:C(S^1)_{(n)}\rightarrow\ost$ is a positive, but not completely positive, linear map, then the matrix
$\displaystyle\sum_{\ell=-n+1}^{n-1}r_\ell\otimes\phi(\chi_\ell)$ is nonpositive in $C(S^1)^{(n)}\omin C(S^1)_{(n)}$.
\end{corollary}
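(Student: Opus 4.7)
The plan is to derive this corollary as an immediate contrapositive of Proposition \ref{pos-cp}(2). Since $\phi(\chi_\ell)\in\ost$, the natural ambient operator system for the matrix $\sum_{\ell=-n+1}^{n-1}r_\ell \otimes \phi(\chi_\ell)$ is $C(S^1)^{(n)}\omin\ost$, and I would read the statement with that understanding (the displayed tensor product in the conclusion being interpreted accordingly, in keeping with the preceding proposition). Under this reading, the corollary is precisely the contrapositive of the implication (a)$\Rightarrow$(b) of Proposition \ref{pos-cp}(2).

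The argument itself is a single step. Suppose, toward a contradiction, that $\sum_{\ell=-n+1}^{n-1}r_\ell\otimes\phi(\chi_\ell)$ is positive in $C(S^1)^{(n)}\omin\ost$. Then Proposition \ref{pos-cp}(2) forces $\phi$ to be completely positive, which contradicts the standing hypothesis. There is no genuine obstacle here: the substance is already absorbed into the equivalence (a)$\Leftrightarrow$(b) proved above, and the proof reduces to one line of contraposition.

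If one prefers to avoid invoking the equivalence as a black box, the same conclusion can be reached by rerunning the relevant half of the proof of Proposition \ref{pos-cp}(2) directly. Namely, positivity of $\sum_\ell r_\ell\otimes\phi(\chi_\ell)$ in $C(S^1)^{(n)}\omin\cstare(\ost)$, together with the universal property of $T_n$ from \cite[Theorem 7.7]{farenick2021}, would produce a completely positive linear map $\psi:C(S^1)_{(n)}\rightarrow\cstare(\ost)$ agreeing with $\phi$ on the basis $\{\chi_\ell\}_{\ell=-n+1}^{n-1}$; agreement on a basis forces $\psi=\phi$, again contradicting that $\phi$ fails to be completely positive. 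Either route yields the corollary at no additional cost beyond what was already established.
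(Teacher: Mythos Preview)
Your proposal is correct and matches the paper's intent: the corollary is stated without proof, as it is the immediate contrapositive of Proposition~\ref{pos-cp}(2). Your observation that the ambient operator system should be $C(S^1)^{(n)}\omin\ost$ rather than $C(S^1)^{(n)}\omin C(S^1)_{(n)}$ is also on target, since this is exactly how the proof of Proposition~\ref{pos-cp} itself reads the matrix.
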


Proposition \ref{pos-cp} may be used to show certain positive linear maps on $C(S^1)_{(n)}$ are not
completely positive.
The following example is inspired by \cite[Appendix A2]{arveson1969} (see also \cite[Example 2.2]{Paulsen-book}).

\begin{example} The linear map $\phi:C(S^1)_{(2)}\rightarrow C(S^1)^{(2)}$ given by
\[
\phi(\alpha_{-1}z^{-1} +\alpha_0 + \alpha_1 z^1)=\left[ \begin{array}{cc} \alpha_0 & 2\alpha_{-1} \\ 2\alpha_1 & \alpha_0\end{array}\right]
\]
is positive but not completely positive.
\end{example}

\begin{proof} Because $\overline{\alpha}_{1}z^{-1} +\alpha_0 + \alpha_1 z^1$ is positive if and only if $\alpha_0+2\Re(\alpha_1z)\geq 0$ for all $z\in S^1$,
$\alpha_0$ and $\alpha_1$ must necessarily satisfy $\alpha_0\geq 2|\alpha_1|$, which in turn implies that 
the matrix $\left[ \begin{array}{cc} \alpha_0 & 2\overline{\alpha}_{1} \\ 2\alpha_1 & \alpha_0\end{array}\right]$
is positive. Hence, the linear map $\phi$ is positive. 

Consider the matrix
\[
x= \sum_{\ell=-1}^1 r_\ell \otimes \phi(\chi_\ell) = \left[ \begin{array}{cc} r_0 & 2r_{-1} \\ 2r_1 & r_0\end{array}\right].
\]
If $\xi\in\mathbb C^4=\mathbb C^2\otimes\mathbb C^2$ is $\xi=e_1-e_4$, then $\langle x\xi,\xi\rangle=-2<0$, which shows that
$x$ is not positive; hence, $\phi$ is not completely positive, by Proposition \ref{pos-cp}.
\end{proof} 

On the other hand:

\begin{example} The linear map $\phi:C(S^1)_{(n)}\rightarrow C(S^1)^{(n)}$ defined on the basis elements
by $\phi(\chi_\ell)=r_\ell$ is completely positive.
\end{example}

\begin{proof} The matrix $\displaystyle\sum_{\ell=-n+1}^{n-1} r_\ell\otimes\phi(\chi_\ell)$ is $R_n$, which is positive. Therefore,
by Proposition \ref{pos-cp}, $\phi$ is completely positive.
\end{proof}

%%%%%%%%%%%%%%%%%%%%%%%%
\subsection{From positive matrices to completely positive linear maps}

For any operator system $\ost$ and Toeplitz matrix $x=\sum_\ell r_\ell\otimes t_\ell\in C(S^1)^{(n)}\otimes \ost$, a linear map
$\hat x: C(S^1)_{(n)}\rightarrow \ost$ is induced in which the action of $\hat x$ is given by
\begin{equation}\label{e:hat}
\hat x(f)=\sum_{\ell=-n+1}^{n-1}\hat f(-\ell) t_\ell,\mbox{ for all }f\in C(S^1)_{(n)}.
\end{equation}
Moreover, the matrix $x$ is positive in $C(S^1)^{(n)}\omin \ost$ if and only if the linear map $\hat x: C(S^1)_{(n)}\rightarrow \ost$ 
is completely positive. 

In the case where the matrix in question is $R_n$, we obtain the map
\[
\hat R_n (f) = \sum_{\ell=-n+1}^{n-1} f(-\ell)r_\ell
=
\left[\begin{array}{cccc} \hat f(0) & \hat f({-1}) & \dots & \hat f({-n+1}) \\ \hat f(1) & \hat f(0) & \ddots & \vdots \\
\vdots & \ddots & \ddots & \hat f({-1}) \\ \hat f({n-1})&\dots &\hat f(1)&\hat f(0) \end{array}\right],
\]
which is unitarily equivalent to the Fourier matrix for $f\in C(S^1)_{(n)}$.

In the case where $x=T_n(\lambda)\otimes T_n(\mu)$ for some $\lambda,\mu\in S^1$, expressing $x$ as 
\[
x=\sum_{\ell=-n+1}^{n-1} r_\ell \otimes \lambda^{\ell}T_n(\mu)
\]
leads to 
\[
\hat x(f) = \left(\sum_{\ell=-n+1}^{n-1} \hat f(-\ell)\lambda^\ell\right) T_n(\mu),
\]
in which case the positivity of $f$ implies the positivity of the scalar that multiplies the positive rank-1 matrix $T_n(\mu)$.
Hence, the following proposition is proved.

\begin{proposition}\label{induced positive linear functionals}
For each $\lambda\in S^1$, the linear functional $\varphi_\lambda:C(S^1)_{(n)}\rightarrow\mathbb C$, defined by 
\[
\varphi_\lambda(f) = \sum_{\ell=-n+1}^{n-1} \hat f(-\ell)\lambda^\ell,
\]
is positive.
\end{proposition}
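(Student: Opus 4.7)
The plan is to harvest the proof from the computation already carried out in the paragraph immediately preceding the statement. Fix any $\mu\in S^1$ and set $\eta(\mu)=(1,\mu,\mu^2,\dots,\mu^{n-1})^{T}\in\mathbb C^n$; then $T_n(\mu)=\eta(\mu)\eta(\mu)^{*}$, which exhibits $T_n(\mu)$ as a nonzero positive element of $C(S^1)^{(n)}$. The same argument shows $T_n(\lambda)\in\left(C(S^1)^{(n)}\right)_+$. Consequently the elementary tensor
\[
x:=T_n(\lambda)\otimes T_n(\mu)
\]
is separable and therefore, by Proposition \ref{tc}, positive in $C(S^1)^{(n)}\omin C(S^1)_{(n)}$. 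The correspondence \eqref{e:hat} between positive matrices in $C(S^1)^{(n)}\omin\ost$ and completely positive maps $C(S^1)_{(n)}\to\ost$ then makes the induced map $\hat x:C(S^1)_{(n)}\to C(S^1)^{(n)}$ completely positive.

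Next I would invoke the factorisation computed just above the proposition, namely
\[
\hat x(f)=\varphi_\lambda(f)\,T_n(\mu)\quad\text{for all }f\in C(S^1)_{(n)}.
\]
Given a positive $f\in C(S^1)_{(n)}$, complete positivity of $\hat x$ forces $\hat x(f)\in\left(C(S^1)^{(n)}\right)_+$; since $T_n(\mu)$ is a nonzero positive matrix (concretely, take $\mu=1$, so $T_n(\mu)$ is the rank-one all-ones matrix), this can happen only when the scalar $\varphi_\lambda(f)$ is nonnegative. That yields the desired positivity of $\varphi_\lambda$.

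A noteworthy shortcut bypasses the tensor-product machinery altogether: the trigonometric polynomial $g(z):=f(1/z)=f(\bar z)$ has Fourier coefficients $\hat g(\ell)=\hat f(-\ell)$, so
\[
\varphi_\lambda(f)=\sum_{\ell=-n+1}^{n-1}\hat g(\ell)\,\lambda^{\ell}=g(\lambda)=f(\bar\lambda).
\]
Thus $\varphi_\lambda$ is simply point evaluation at $\bar\lambda\in S^1$, which is manifestly positive on $\left(C(S^1)_{(n)}\right)_+$. There is no substantive obstacle in either route; the content of the proposition is essentially already present in the discussion that precedes it, and the role of the statement is just to package the consequence for later use.
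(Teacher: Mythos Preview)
Your first route is exactly the paper's argument: the proposition is stated with ``Hence, the following proposition is proved,'' and the proof is precisely the computation in the preceding paragraph using $x=T_n(\lambda)\otimes T_n(\mu)$ and the correspondence \eqref{e:hat}. One small slip: since $T_n(\mu)\in C(S^1)^{(n)}$, the tensor $x$ lives in $C(S^1)^{(n)}\omin C(S^1)^{(n)}$, not $C(S^1)^{(n)}\omin C(S^1)_{(n)}$ as you wrote; the induced map $\hat x$ still has domain $C(S^1)_{(n)}$, so the rest is unaffected. Your shortcut identifying $\varphi_\lambda(f)=f(\bar\lambda)$ is a cleaner, self-contained alternative that the paper does not take---it sidesteps the tensor machinery entirely and makes the positivity immediate from the fact that point evaluations on $S^1$ are positive.
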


%%%%%%%%%%%%%%%%%%%%%%%
\subsection{Linear $*$-preserving idempotents}

The following observation situates the Toeplitz operator system within the larger matrix algebra.

\begin{proposition}\label{idempotent} There exists a linear map
$\mathcal E_n:\M_n(\mathbb C)\rightarrow\M_n(\mathbb C)$, for every $n\geq 2$, 
such that
\begin{enumerate}
\item $\mathcal E_n(x^*)=\mathcal E_n(x)^*$, for all $x\in\mathcal M_n(\mathbb C)$,
\item $\mathcal E_n$ is idempotent, and
\item the range of $\mathcal E_n$ is $C(S^1)^{(n)}$.
\end{enumerate}
Furthermore, $\mathcal E_2$ is positive, but not completely positive.
\end{proposition}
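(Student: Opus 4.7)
The natural candidate for $\mathcal{E}_n$ is the \emph{diagonal averaging map}: for $x=[x_{ij}]_{i,j=1}^n\in\M_n(\mathbb{C})$ and $\ell\in\{-(n-1),\dots,n-1\}$, set
\[
\alpha_\ell(x)=\frac{1}{n-|\ell|}\sum_{i-j=\ell}x_{ij},
\]
and define $\mathcal{E}_n(x)=\sum_{\ell=-n+1}^{n-1}\alpha_\ell(x)\,r_\ell$. This is nothing other than the Hilbert--Schmidt orthogonal projection onto $C(S^1)^{(n)}$: the basis $\{r_\ell\}$ is orthogonal with $\langle r_\ell,r_\ell\rangle=n-|\ell|$, and $\langle r_\ell,x\rangle=\sum_{i-j=\ell}x_{ij}$.

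The three listed properties are routine. Re-indexing the defining sum gives $\alpha_\ell(x^*)=\overline{\alpha_{-\ell}(x)}$, and combined with $r_\ell^*=r_{-\ell}$ this yields $\mathcal{E}_n(x^*)=\mathcal{E}_n(x)^*$, so that (1) holds. For (2), if $x=\sum_\ell\beta_\ell r_\ell$ is already Toeplitz, then the $\ell$-th diagonal of $x$ is constantly equal to $\beta_\ell$, so $\alpha_\ell(x)=\beta_\ell$ and $\mathcal{E}_n(x)=x$; in particular $\mathcal{E}_n$ is idempotent. Statement (3) is then immediate: the range lies in $C(S^1)^{(n)}$ by construction and contains every Toeplitz matrix by (2).

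For the $n=2$ claim I would argue by direct computation. The map takes the explicit form
\[
\mathcal{E}_2\!\left(\left[\begin{array}{cc} a&b\\ c&d\end{array}\right]\right)=\left[\begin{array}{cc} (a+d)/2&b\\ c&(a+d)/2\end{array}\right],
\]
so if the input is positive (forcing $c=\overline{b}$, $a,d\geq 0$, and $|b|^2\leq ad$), then the AM--GM inequality $\sqrt{ad}\leq(a+d)/2$ gives $|b|\leq(a+d)/2$, from which the output is positive. To refute complete positivity I would write down the Choi matrix $C_{\mathcal{E}_2}=\sum_{i,j=1}^{2}e_{ij}\otimes\mathcal{E}_2(e_{ij})$. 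Since $\mathcal{E}_2(e_{11})=\mathcal{E}_2(e_{22})=\tfrac{1}{2}I$ while $\mathcal{E}_2(e_{12})=e_{12}$ and $\mathcal{E}_2(e_{21})=e_{21}$, this Choi matrix has $\tfrac{1}{2}$ on its main diagonal, a $1$ in positions $(1,4)$ and $(4,1)$, and zeros elsewhere. Testing on $\xi=e_1-e_4\in\mathbb{C}^4$ gives $\langle C_{\mathcal{E}_2}\xi,\xi\rangle=\tfrac12-1-1+\tfrac12=-1<0$, so $\mathcal{E}_2$ is not completely positive.

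The only potential stumbling block is bookkeeping: correctly deriving the $*$-compatibility identity $\alpha_\ell(x^*)=\overline{\alpha_{-\ell}(x)}$ and placing the entries of the Choi matrix correctly under the identification $\M_2\otimes\M_2\cong\M_4$. No tools beyond the definitions of $\mathcal{E}_n$ and the Choi matrix are required.
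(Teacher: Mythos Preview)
Your proof is correct and follows essentially the same approach as the paper: the diagonal-averaging map, the AM--GM argument for positivity of $\mathcal{E}_2$, and the Choi matrix for the failure of complete positivity. Your version is in fact more explicit than the paper's, which merely asserts that the Choi matrix is ``easily seen to be nonpositive'' without exhibiting a vector.
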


\begin{proof} Let $\mathcal E_n$ be the map that 
averages the entries along each of the super diagonals of a matrix $x$ and replaces each entry 
along the super diagonal with that average. Thus, $\mathcal E_n$ is linear, satisfies 
$\mathcal E_n(x^*)=\mathcal E_n(x)^*$, for all $x\in\mathcal M_n(\mathbb C)$, and maps into
$C(S^1)^{(n)}$. 
If $x$ is a Toeplitz matrix, then the super diagonals are constant, leaving an average of the entries unchanged. Hence,
$\mathcal E_n$ maps onto $C(S^1)^{(n)}$ and
$\mathcal E_n^2=\mathcal E_n$.

To show that $\mathcal E_2$ is positive, 
let $x=\left[\begin{array}{cc}\alpha & \overline{\gamma} \\ \gamma & \beta\end{array}\right]$ be a positive
$2\times 2$ matrix. Thus, $\alpha\geq 0$, $\beta \geq 0$, and $|\gamma|^2\leq \alpha\beta$.
If 
\[
y=\mathcal E_2(x) = \left[ \begin{array}{cc} \frac{\alpha+\beta}{2} & \overline{\gamma} \\ 
\gamma & \frac{\alpha+\beta}{2} \end{array}\right],
\]
then the diagonal entries of $y$ are nonnegative and 
\[
|\gamma| \leq \sqrt{\alpha\beta} \leq \frac{\alpha+\beta}{2},
\]
where the second inequality is the arithmetic-geometric mean inequality. Thus, the matrix $y$ is
positive, proving $\mathcal E_2$ is a positive linear map.

The Choi matrix $\left[ \mathcal E_2(e_{ij})\right]_{i,j=1}^2$
for $\mathcal E_2$ is easily seen to be nonpositive; hence, $\mathcal E_2$ is not completely
positive.
\end{proof}

\begin{corollary}[Selfadjoint Toeplitz matrices]\label{saT} For every $n\geq 2$ and complex $*$-vector space $\mathcal V$,
\[
C(S^1)^{(n)}_{\rm sa}\otimes_{\mathbb R} \mathcal V_{\rm sa} 
=
\left( C(S^1)^{(n)}  \otimes_{\mathbb C} \mathcal V  \right)_{\rm sa}.
\]
\end{corollary}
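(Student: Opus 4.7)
The plan is to combine two ingredients already available in the paper: \cite[Lemma 3.7]{paulsen--todorov--tomforde2010}, which gives the corresponding identity for the full matrix algebra, together with the $*$-preserving idempotent $\mathcal E_n:\M_n(\mathbb C)\to \M_n(\mathbb C)$ with range $C(S^1)^{(n)}$ supplied by Proposition \ref{idempotent}. The containment ``$\subseteq$'' is immediate since an elementary tensor of two selfadjoint factors is selfadjoint in the complex tensor product, so the whole task reduces to decomposing an arbitrary selfadjoint element $x$ of $C(S^1)^{(n)}\otimes_{\mathbb C}\ost$ as a finite sum of elementary tensors whose factors lie in $C(S^1)^{(n)}_{\rm sa}$ and $\ost_{\rm sa}$, respectively.

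First I would view the given $x$ as a selfadjoint element of the ambient space $\M_n(\mathbb C)\otimes_{\mathbb C}\ost$ via the inclusion $C(S^1)^{(n)}\subseteq \M_n(\mathbb C)$, and then invoke the Paulsen--Todorov--Tomforde lemma to obtain a representation $x=\sum_i a_i\otimes t_i$ with $a_i\in \M_n(\mathbb C)_{\rm sa}$ and $t_i\in\ost_{\rm sa}$. The key step is to pull this decomposition back into the Toeplitz subsystem by applying $\mathcal E_n\otimes \mathrm{id}_\ost$. Two features of $\mathcal E_n$ are used: it is $*$-preserving, so each $\mathcal E_n(a_i)$ remains selfadjoint and lies in $C(S^1)^{(n)}_{\rm sa}$; and it is idempotent with range $C(S^1)^{(n)}$, so it restricts to the identity on $C(S^1)^{(n)}$, which forces $(\mathcal E_n\otimes \mathrm{id}_\ost)(x)=x$. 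Combining the two expressions for $(\mathcal E_n\otimes \mathrm{id}_\ost)(x)$ gives $x=\sum_i\mathcal E_n(a_i)\otimes t_i$, exhibiting $x$ as an element of $C(S^1)^{(n)}_{\rm sa}\otimes_{\mathbb R}\ost_{\rm sa}$.

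There is no genuine obstacle here; the argument is essentially a two-line consequence of Proposition \ref{idempotent} once one observes that the simplicial-basis argument used in the proof of Theorem \ref{equality} is unavailable, as the cone $(C(S^1)^{(n)})_+$ is not simplicial for $n\geq 2$. The role of $\mathcal E_n$ is precisely to replace that missing basis: it provides a $*$-preserving retraction that transports a generic real-tensor decomposition living in the ambient matrix algebra down into the Toeplitz subsystem without destroying selfadjointness.
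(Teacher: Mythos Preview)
Your proposal is correct and follows essentially the same route as the paper's own proof: embed into $\M_n(\mathbb C)\otimes_{\mathbb C}\ost$, invoke \cite[Lemma 3.7]{paulsen--todorov--tomforde2010} to get a selfadjoint tensor decomposition, and then apply $\mathcal E_n\otimes\mathrm{id}_\ost$ using the $*$-preserving idempotent of Proposition \ref{idempotent} to pull it back into the Toeplitz system. The paper verifies $(\mathcal E_n\otimes\mathrm{id}_\ost)(x)=x$ by writing $x$ in the basis $\{r_\ell\}$, whereas you invoke directly that an idempotent is the identity on its range; this is the same argument.
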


\begin{proof} If $x=\displaystyle\sum_{r=-n+1}^{n-1} r_\ell\otimes v_\ell \in 
\left( C(S^1)^{(n)}  \otimes_{\mathbb C} \mathcal V  \right)_{\rm sa}$, then 
$x\in \left( \M_n(\mathbb C) \otimes_{\mathbb C} \mathcal V  \right)_{\rm sa}$ as well. Therefore, by
\cite[Lemma 3.7]{paulsen--todorov--tomforde2010}, there are selfadjoint $h_k\in \M_n(\mathbb C)$
and $s_k\in\mathcal V_{\rm sa}$ such that
\[
x=\sum_{r=-n+1}^{n-1} r_\ell\otimes v_\ell =\sum_{j=1}^m h_k\otimes s_k.
\]
Apply the ampliation $\mathcal E_n\otimes\mbox{\rm Id}_{\mathcal V}$ to obtain
\[
(\mathcal E_n\otimes\mbox{\rm Id}_{\mathcal V})[x]
=\sum_{j=1}^m \mathcal E_n(h_k)\otimes s_k= 
\sum_{r=-n+1}^{n-1} \mathcal E_n(r_\ell)\otimes v_\ell
=\sum_{r=-n+1}^{n-1}  r_\ell \otimes v_\ell=x.
\]
As each $\mathcal E_n(h_k)$ is a selfadjoint Toeplitz matrix, the proof is complete.
\end{proof}

As a consequence of Corollary \ref{no-wep1}, if $n\geq 2$ and 
$\mathcal G_n:\M_n(\mathbb C)\rightarrow\M_n(\mathbb C)$ is an idempotent linear map
with range $C(S^1)^{(n)}$, then $\mathcal G_n$ can not be completely positive.
The situation, however, is rather different for the circulant operator systems $C^{n,\theta}$.

\begin{proposition}\label{idempotent2} For every $n\geq 2$ and $\theta\in \mathbb R$, there exists a linear map
$\mathcal F_{n,\theta}:\M_n(\mathbb C)\rightarrow\M_n(\mathbb C)$  
such that
\begin{enumerate}
\item $\mathcal F_{n,\theta}$ is completely positive,
\item $\mathcal F_{n,\theta}$ is idempotent, and
\item the range of $\mathcal F_{n,\theta}$ is $C^{n,\theta}$.
\end{enumerate}
\end{proposition}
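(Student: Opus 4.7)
The plan is to construct $\mathcal{F}_{n,\theta}$ as the standard group-averaging conditional expectation associated with the cyclic action of the inner automorphism $\ad u_\theta$. Explicitly, I would define
\[
\mathcal{F}_{n,\theta}(x) \;=\; \frac{1}{n}\sum_{k=0}^{n-1} u_\theta^k \, x \, u_\theta^{-k},
\qquad x\in \M_n(\mathbb{C}).
\]
Since the minimal polynomial of $u_\theta$ is $z^n - e^{i\theta}$, we have $u_\theta^n = e^{i\theta} 1_n$, which is a scalar; consequently $(\ad u_\theta)^n$ is the identity automorphism, and the sum above is genuinely indexed by the cyclic group $\mathbb Z_n$, so the expression depends only on the residue class of $k$ modulo $n$.

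The three required properties should then fall out in short order. Each inner automorphism $\ad u_\theta^k$ is a unital $*$-automorphism of $\M_n(\mathbb{C})$ and is therefore completely positive; hence $\mathcal{F}_{n,\theta}$, being a convex combination of completely positive maps, is itself unital and completely positive. For the range, a direct calculation should show that $u_\theta\, \mathcal{F}_{n,\theta}(x)\, u_\theta^{-1} = \mathcal{F}_{n,\theta}(x)$ for every $x \in \M_n(\mathbb{C})$, so the range lies in the commutant of $u_\theta$ in $\M_n(\mathbb{C})$. Conversely, if $y$ commutes with $u_\theta$, then every summand in $\mathcal{F}_{n,\theta}(y)$ equals $y$, whence $\mathcal{F}_{n,\theta}(y) = y$; this simultaneously establishes idempotency of $\mathcal{F}_{n,\theta}$ and shows that its range is exactly the commutant of $u_\theta$.

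What remains is to identify this commutant with $C^{n,\theta}$. The minimal polynomial $z^n - e^{i\theta}$ has $n$ distinct roots in $S^1$, so $u_\theta$ has simple spectrum; this forces its commutant in $\M_n(\mathbb{C})$ to be $n$-dimensional and to coincide with the unital polynomial algebra generated by $u_\theta$, which by definition is $C^{n,\theta}$. I do not anticipate any serious obstacle: the whole argument is a textbook finite-group averaging construction, and the only point requiring care is the verification that $u_\theta^n$ is a scalar multiple of $1_n$, which is what legitimises treating the sum as a sum over $\mathbb Z_n$ and makes $\ad u_\theta$ an action of a finite cyclic group on $\M_n(\mathbb{C})$.
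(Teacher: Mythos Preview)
Your argument is correct. The group-averaging construction yields a completely positive idempotent with range equal to the commutant of $u_\theta$, and your identification of that commutant with $C^{n,\theta}$ via the simple-spectrum observation is valid.

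The paper takes a different, though closely related, route: it diagonalises $u_\theta$ by a unitary $v_\theta$, so that the inner automorphism $\vartheta_n(x)=v_\theta^* x v_\theta$ carries $C^{n,\theta}$ onto the diagonal subalgebra, and then transports the standard diagonal compression $E_n$ back through $\vartheta_n$. In fact the two constructions produce the \emph{same} map: in the diagonal picture your average $\frac{1}{n}\sum_k D^k y D^{-k}$ (with $D=v_\theta^* u_\theta v_\theta$) kills the off-diagonal entries because the ratios $\lambda_i/\lambda_j$ are nontrivial $n$-th roots of unity for $i\neq j$, so one recovers $E_n$. Your presentation has the advantage that idempotency and complete positivity are immediate from the averaging formula without any appeal to the spectral theorem, while the paper's version makes the isomorphism $C^{n,\theta}\simeq\mathbb C^n$ explicit along the way.
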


\begin{proof} 
Fix $n\geq 2$ and $\theta\in \mathbb R$, and let $(\lambda_1,\dots,\lambda_n)$ be a fixed ordering of the (necessarily distinct)
eigenvalues of the unitary matrix $u_\theta$.
By the Spectral Theorem, there is a unitary $v_\theta$ such that
$v_\theta^* f(u_\theta) v_\theta$ is a diagonal matrix, for every polynomial $f\in\mathbb C[z]$. Thus, the
automorphism $\vartheta_n(x)=  v_\theta^* x v_\theta$ of $\M_n(\mathbb C)$ maps $C^{n,\theta}$ into the algebra of diagonal matrices. In fact this 
map is onto, because, for any $n$-tuple $(w_1,\dots, w_n)$ of complex numbers, there is an interpolating polynomial $g\in\mathbb C[z]$
that sends each $\lambda_j$ to $w_j$; that is, the diagonal matrix determined by $(w_1,\dots, w_n)$ is $v_\theta^* g(u_\theta) v_\theta$.
The linear map $E_n$ that sends every $n\times n$ matrix to its diagonal is a completely positive 
idempotent that maps $\M_n(\mathbb C)$ onto the subalgebra of diagonal matrices. Thus,  define $\mathcal F_{n,\theta}$
to be $\vartheta_n^{-1}\circ E_n$.
\end{proof}

%%%%%%%%%%%%%%%%%%%%%%%%%%%%%%%%%%%
\subsection{Geometry of the positive cones}

This section concludes with observations
regarding the geometric nature of the positive cones of the operator systems discussed herein.

Evaluating a universal Toeplitz matrix $T_n$ at a point $\lambda\in S^1$ yields a positive Toeplitz matrix
$T_n(\lambda)$ in which $n^{-1}T_n(\lambda)$ is a rank-1 projection.
Such matrices comprise the extremal rays of the cone $\left(C(S^1)^{(n)}\right)_+$.

\begin{proposition}[Extremal Rays]\label{cvS}\cite[Propositions 4.5 
and 4.8]{connes-vansuijlekom2021} The extremal rays of the cone 
$\left(C(S^1)^{(n)}\right)_+$ are positive scalar multiples of matrices of the form $T_n(\lambda)$, for 
$\lambda\in S^1$. The extremal rays of the cone 
$\left(C(S^1)_{(n)}\right)_+$ are positive scalar multiples of those elements
 $f\in \left(C(S^1)_{(n)}\right)_+$ with the property that
  $f(\omega)=0$, for some $\omega\in\mathbb C\setminus\{0\}$, only if $\omega\in S^1$.
\end{proposition}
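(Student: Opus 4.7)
The plan is to treat the two assertions separately: part (1) via the Carath\'eodory--Toeplitz moment theorem, and part (2) via the Fej\'er--Riesz factorization theorem.

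For part (1), the key observation is that $T_n(\lambda)=\xi_\lambda\xi_\lambda^{*}$ where $\xi_\lambda=(1,\lambda,\dots,\lambda^{n-1})^T\in\mathbb{C}^n$, making $T_n(\lambda)$ a positive rank-one matrix. Since any positive operator dominated by a rank-one positive operator is a nonnegative scalar multiple of it, $T_n(\lambda)$ generates an extremal ray of the full cone $\M_n(\mathbb{C})_+$ and, \emph{a fortiori}, of $(C(S^1)^{(n)})_+$. For the converse inclusion, I would invoke the Carath\'eodory--Toeplitz theorem: every $T\in (C(S^1)^{(n)})_+$ admits a representation $T=\int_{S^1}T_n(\lambda)\,d\mu(\lambda)$ by a positive Borel measure $\mu$ on $S^1$. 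This exhibits $(C(S^1)^{(n)})_+$ as the closed convex cone generated by $\{T_n(\lambda):\lambda\in S^1\}$, forcing every extremal ray to be a positive multiple of some $T_n(\lambda)$.

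For part (2), I would start from the Fej\'er--Riesz factorization: every $f\in (C(S^1)_{(n)})_+$ equals $|g|^2$ on $S^1$ for some $g\in\mathbb{C}[z]$ with $\deg g\leq n-1$, and the nonzero zeros of $f$ in $\mathbb{C}$ are precisely the roots of $g$ together with their reflections $\omega\mapsto 1/\bar\omega$. Hence requiring all nonzero zeros of $f$ to lie on $S^1$ amounts to requiring all roots of $g$ to lie on $S^1$; combined with $\deg g=n-1$, this gives $f=c\prod_{j=1}^{n-1}|z-\omega_j|^2$ with $\omega_j\in S^1$. To establish extremality, suppose $f=f_1+f_2$ with $f_i\in (C(S^1)_{(n)})_+$. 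At each $\omega_j$, the pointwise inequality $f_i\leq f$ together with nonnegativity forces $f_i$ to vanish to at least the same even order as $f$; summing over $j$, each $f_i$ acquires total $S^1$-zero multiplicity at least $2(n-1)$. But $f_i=|g_i|^2$ with $\deg g_i\leq n-1$ has at most $2(n-1)$ nonzero zeros in $\mathbb{C}^{*}$, so equality forces $g_i$ to share all roots with $g$, giving $g_i=c_ig$ and $f_i=|c_i|^2f$.

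For the converse direction, suppose $g$ admits a root $\omega\in\mathbb{C}\setminus(S^1\cup\{0\})$; without loss of generality $|\omega|<1$. Factor $f=|z-\omega|^2 h$ where $h=|g/(z-\omega)|^2\in (C(S^1)_{(n-1)})_+$, and set $c=(1-|\omega|)^2>0$, the minimum of $|z-\omega|^2$ on $S^1$. The decomposition $|z-\omega|^2=c+(|z-\omega|^2-c)$ then yields $f=ch+(|z-\omega|^2-c)h$, with both summands in $(C(S^1)_{(n)})_+$; since $ch\in (C(S^1)_{(n-1)})_+$ has strictly lower trigonometric degree than $f$, it is not proportional to $f$, giving a nontrivial decomposition. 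The main obstacle I anticipate is the local multiplicity comparison used above: to show that a nonnegative trigonometric polynomial $f_i$ pointwise dominated by $f$ on $S^1$ must vanish at each $S^1$-zero of $f$ to at least the same even order. This I would resolve via the asymptotic $f(z)\sim C|z-\omega|^{2k}$ near a zero of order $2k$, noting that any lower even-order vanishing of $f_i$ would produce $f_i(z)\sim c'|z-\omega|^{2k_i}\gg f(z)$ in a punctured neighbourhood of $\omega$, contradicting $f_i\leq f$.
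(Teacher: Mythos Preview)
The paper does not actually prove this proposition; it is quoted directly from Connes--van Suijlekom \cite[Propositions 4.5 and 4.8]{connes-vansuijlekom2021} without argument. So there is no ``paper's own proof'' to compare against, and your outline stands on its own.

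Your argument for part~(1) is correct and is the standard one: rank-one positivity gives extremality of each $T_n(\lambda)$, and the Carath\'eodory--Toeplitz integral representation combined with compactness of $\{T_n(\lambda):\lambda\in S^1\}$ forces every extremal ray to be of this form.

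For part~(2) your argument is essentially right, but there is one point where you silently assume something that is not in the stated hypothesis. You write ``combined with $\deg g=n-1$'' without justifying it, and in fact the stated zero condition alone does \emph{not} force full degree. For instance, $f(z)=|z-1|^2\in(C(S^1)_{(3)})_+$ has its only nonzero zero at $z=1\in S^1$, yet
\[
|z-1|^2=\tfrac14\,|z-1|^2|z+1|^2+\tfrac14\,|z-1|^4
\]
is a nontrivial decomposition in $(C(S^1)_{(3)})_+$, so $|z-1|^2$ is not extremal there. Likewise the constant function satisfies the zero condition vacuously but is interior. The correct characterisation (and what Connes--van Suijlekom actually prove) requires $f$ to have full trigonometric degree $n-1$, equivalently $\hat f(n-1)\neq 0$; the paper's wording is slightly loose on this point. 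Once you add that hypothesis, your multiplicity-counting argument goes through exactly as you describe: $f_i\leq f$ forces $f_i$ to inherit all $2(n-1)$ zeros on $S^1$ with multiplicity, and since $z^{n-1}f_i$ is a polynomial of degree at most $2(n-1)$, this pins down $f_i$ up to a scalar. To complete the characterisation you should also show that elements of degree strictly less than $n-1$ are never extremal in $(C(S^1)_{(n)})_+$; the same splitting trick you used for a root off $S^1$ (multiply a lower-degree $h$ by a nonconstant factor $|z-\omega|^2$ with $\omega\in S^1$ and split off its minimum) handles this.
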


\begin{proposition}\label{not classical} If $n\geq 2$, 
the cone $\left(C^{n,\theta}\right)_+$ is simplicial, for every $\theta\in\mathbb R$, whereas the cones
$\left(C(S^1)^{(n)}\right)_+$ and
 $\left(C(S^1)_{(n)}\right)_+$ are not simplicial.
\end{proposition}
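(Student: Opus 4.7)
The plan is to treat the three cones separately, using the spectral theorem for the circulant case and a counting argument on extremal rays for the Toeplitz and Fej\'er-Riesz cases.

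First I would dispatch the circulant statement by appealing to the observation already recorded in the paragraph following the definition of $C^{n,\theta}$. By the Spectral Theorem, $C^{n,\theta}$ is a unital abelian $C^*$-subalgebra of $\M_n(\mathbb C)$ isomorphic to $\mathbb C^n$, so the selfadjoint part $\left(C^{n,\theta}\right)_{\rm sa}$ is an $n$-dimensional real vector space whose positive cone is affinely homeomorphic to the nonnegative orthant $\mathbb R^n_+$. This cone is generated by the $n$ minimal spectral projections of $u_\theta$, which form a real linear basis of $\left(C^{n,\theta}\right)_{\rm sa}$; hence $\left(C^{n,\theta}\right)_+$ is simplicial by definition.

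Next I would show that $\left(C(S^1)^{(n)}\right)_+$ is not simplicial. The real vector space $C(S^1)^{(n)}_{\rm sa}$ has dimension $2n-1$, since a selfadjoint Toeplitz matrix is determined by one real parameter on the diagonal together with $n-1$ complex parameters on the strict super-diagonals. By Proposition \ref{cvS}, every extremal ray of $\left(C(S^1)^{(n)}\right)_+$ is of the form $\mathbb R_{\ge 0}\cdot T_n(\lambda)$ for some $\lambda\in S^1$, and the family $\{T_n(\lambda)\}_{\lambda\in S^1}$ yields pairwise distinct rays: the super-diagonal entries of $T_n(\lambda)$ are $\lambda,\lambda^2,\dots,\lambda^{n-1}$, so $T_n(\lambda_1)$ and $T_n(\lambda_2)$ are not proportional when $\lambda_1\ne\lambda_2$. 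Thus $\left(C(S^1)^{(n)}\right)_+$ has uncountably many extremal rays, whereas a simplicial cone in a finite-dimensional real vector space has exactly $\dim V$ extremal rays; this rules out simpliciality.

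Finally, for $\left(C(S^1)_{(n)}\right)_+$ I would either invoke duality or argue directly. The clean path is to use Theorem \ref{toeplitz duality}: the unital complete order isomorphism $C(S^1)^{(n)}\simeq\left(C(S^1)_{(n)}\right)^d$ identifies the base positive cones, and, under the canonical identification of $(\osr^d)_+$ with the dual of $\osr_+$ in the real vector space $\osr_{\rm sa}$ discussed earlier in the paper, one sees that if $\left(C(S^1)_{(n)}\right)_+$ were simplicial then its dual cone would be simplicial as well, contradicting the preceding step. Alternatively, and purely self-containedly, the characterisation of extremal rays of $\left(C(S^1)_{(n)}\right)_+$ in Proposition \ref{cvS} together with the Fej\'er--Riesz factorisation provides an infinite family of extremal positive trigonometric polynomials (for instance, $f_\lambda(z)=\prod_{j=1}^{n-1}|z-\mu_j|^2$ for varying choices of distinct $\mu_j\in S^1$), again exceeding the dimension $2n-1$ of $\left(C(S^1)_{(n)}\right)_{\rm sa}$. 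There is no serious obstacle in the argument; the only point requiring a line of care is the trivial verification that distinct parameter values give non-proportional extremal elements, so that the count of extremal rays genuinely exceeds the dimension.
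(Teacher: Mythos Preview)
Your proposal is correct and follows essentially the same approach as the paper: the circulant case is handled via the spectral decomposition already set up in the text, and both non-simplicial assertions are obtained by exhibiting infinitely many extremal rays through Proposition~\ref{cvS}, contradicting the finiteness of extremal rays in a simplicial cone. The paper's direct witness for $\left(C(S^1)_{(n)}\right)_+$ is the one-parameter family $f_\lambda(z)=\lambda z^{-1}+2+\lambda^{-1}z$, which is the $n=2$ instance of your product construction; your alternative duality argument (a simplicial cone has simplicial dual, so $\left(C(S^1)_{(n)}\right)_+$ simplicial would force $\left(C(S^1)^{(n)}\right)_+$ simplicial) is a legitimate shortcut the paper does not use. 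One cosmetic slip: the entries $\lambda,\lambda^2,\dots$ of $T_n(\lambda)$ sit on the sub-diagonals rather than the super-diagonals, but this does not affect the non-proportionality argument.
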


\begin{proof} The discussion leading up to the statement of the proposition establishes that 
$\left(C^{n,\theta}\right)_+$ is a simplicial cone. 

By Proposition \ref{cvS}, 
the extremal rays of $\left(C(S^1)^{(n)}\right)_+$ are generated by $T_n(\lambda)$ for $\lambda\in S^1$,
and so there cannot be a linear isomorphism that maps $\left(C(S^1)^{(n)}\right)_+$ 
onto any simplicial cone, as a simplicial cone
has only finitely many extremal rays. 

Likewise, the extremal rays of $\left(C(S^1)_{(n)}\right)_+$ 
are generated by Laurent polynomials that are nonnegative on $S^1$ and 
whose zeros are contained in $S^1$. As 
\[
f_\lambda(z)=\lambda z^{-1}+2+\lambda^{-1} z
\]
is one such Laurent polynomial, for each $\lambda\in S^1$,
the cone $\left(C(S^1)^{(n)}\right)_+$ has infinitely many extremal rays.
\end{proof}

\begin{corollary}\label{nope} If $\osr_+$ and $\ost_+$ are Toeplitz or Fej\'er-Riesz cones, then
\[
\osr_+\otimes_{\rm sep}\ost_+\not= \osr_+\otimes_{{\rm sep}^*} \ost_+.
\]
\end{corollary}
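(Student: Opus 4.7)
The plan is to deduce this corollary directly from Theorem \ref{equality} (the Aubrun--Ludovico--Palazuelos theorem, adapted to the complex setting) combined with the geometric observation in Proposition \ref{not classical}. The point is that the whole burden has already been carried by these two prior results, so the corollary becomes a short logical application.

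First I would recall Theorem \ref{equality}, which asserts that for finite-dimensional operator systems $\osr$ and $\ost$, one has $\osr_+\otimes_{\rm sep}\ost_+ = \osr_+\otimes_{{\rm sep}^*}\ost_+$ if and only if at least one of the base cones $\osr_+$, $\ost_+$ is simplicial. Thus, to prove the non-equality, it suffices to exhibit that neither of $\osr_+$ nor $\ost_+$ is simplicial when both are drawn from the Toeplitz and Fej\'er-Riesz families.

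Next I would invoke Proposition \ref{not classical}, which shows that for every $n\geq 2$, the cones $\left(C(S^1)^{(n)}\right)_+$ and $\left(C(S^1)_{(n)}\right)_+$ each possess infinitely many extremal rays (the rays $\mathbb{R}_{\geq 0}\cdot T_n(\lambda)$ for $\lambda\in S^1$ in the Toeplitz case, and the rays through Laurent polynomials like $f_\lambda(z)=\lambda z^{-1}+2+\lambda^{-1}z$ in the Fej\'er-Riesz case). Since a simplicial cone in a finite-dimensional real vector space admits only finitely many extremal rays, neither cone can be simplicial.

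Combining these, whenever $\osr_+$ and $\ost_+$ are both Toeplitz or Fej\'er-Riesz cones, neither one is simplicial, and so the ``only if'' direction of Theorem \ref{equality} forces $\osr_+\otimes_{\rm sep}\ost_+\not= \osr_+\otimes_{{\rm sep}^*}\ost_+$. There is essentially no obstacle here beyond ensuring that Theorem \ref{equality} applies in its complex formulation (Notation \ref{notation cmplx}); but this was already verified in the proof of Theorem \ref{equality} itself, so no further adjustment is needed.
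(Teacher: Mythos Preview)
Your proposal is correct and follows exactly the same approach as the paper's own proof: apply the Aubrun--Ludovico--Palazuelos result (via Theorem \ref{equality}) together with Proposition \ref{not classical} to conclude that neither cone is simplicial, whence equality fails.
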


\begin{proof} If equality held, then the main result of \cite{aubrun--ludovico--palazuelos2021} 
implies at least one of the Toeplitz cones $\osr_+$ or $\ost_+$
would be simplicial; however, such an implication is
impossible by Proposition \ref{not classical}.
\end{proof}

Proposition \ref{cvS} provides a method via convexity to generate all $n\times n$ positive
Toeplitz matrices, foreshadowing a similar result (Proposition \ref{nc cnvx})
for positive block Toeplitz matrices.

\begin{definition} For $n\ge 2$, let $\mathbb T^n$ denote the $n$-torus $S^1\times \cdots \times S^1$ and let 
$\mathfrak P_n\subset\mathbb T^n$ be the set of all $n$-tuples of the form
$(\lambda,\lambda^2,\dots,\lambda^n)$, for $\lambda\in S^1$. 
\end{definition}

\begin{proposition}\label{foreshadow} $\xi=(\xi_1,\dots,\xi_{n-1})\in\mbox{\rm Conv}\,\mathfrak P_{n-1}$ if and only if
the Toeplitz matrix 
\[
x(\xi)= \left[\begin{array}{cccc} 1 & \overline\xi_1 & \dots & \overline\xi_{n-1} \\ \xi_1 & 1 & \ddots & \vdots \\
\vdots & \ddots & \ddots & \overline\xi_1 \\ \xi_{n-1}&\dots &\xi_1&1 \end{array}\right]
\]
is positive.
\end{proposition}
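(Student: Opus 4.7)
\medskip

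The plan is to deduce both directions from the description of the extremal rays of $\left(C(S^1)^{(n)}\right)_+$ given by Proposition \ref{cvS}, using the basic observation that the affine map $\xi\mapsto x(\xi)$ sends the generator $(\lambda,\lambda^2,\dots,\lambda^{n-1})\in\mathfrak P_{n-1}$ precisely to the rank-one positive Toeplitz matrix $T_n(\lambda)$, since the $(k+m,k)$ entry of $T_n(\lambda)$ is $\lambda^m$ and its diagonal entries are all $1$.

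For the implication from convex hull membership to positivity, suppose $\xi=\sum_{j=1}^N t_j(\lambda_j,\lambda_j^2,\dots,\lambda_j^{n-1})$ with $t_j\geq 0$ and $\sum_{j=1}^N t_j=1$. By linearity of $\xi\mapsto x(\xi)$ on the off-diagonal entries and by the observation above,
\[
x(\xi)=\sum_{j=1}^N t_j\,T_n(\lambda_j),
\]
which is a convex combination of positive matrices and hence positive.

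For the converse, assume $x(\xi)$ is positive. By Proposition \ref{cvS} the extremal rays of the closed proper cone $\left(C(S^1)^{(n)}\right)_+$ in the finite-dimensional real vector space $C(S^1)^{(n)}_{\mathrm{sa}}$ are generated by $\{T_n(\lambda):\lambda\in S^1\}$, so Carath\'eodory's theorem for closed convex cones in finite dimensions (equivalently, Krein-Milman applied to a compact base) yields a finite decomposition
\[
x(\xi)=\sum_{j=1}^N c_j\,T_n(\lambda_j),\qquad c_j\geq 0,\ \lambda_j\in S^1.
\]
Reading off the $(k,k)$ diagonal entries on both sides gives $1=\sum_{j=1}^N c_j$, so the $c_j$ form a convex combination. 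Reading off the $m$th subdiagonal entries gives $\xi_m=\sum_{j=1}^N c_j\lambda_j^m$ for each $m=1,\dots,n-1$, which is precisely the statement that
\[
\xi=\sum_{j=1}^N c_j(\lambda_j,\lambda_j^2,\dots,\lambda_j^{n-1})\in\mbox{\rm Conv}\,\mathfrak P_{n-1}.
\]

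I do not anticipate a substantive obstacle, because Proposition \ref{cvS} does the structural work. The only minor point that warrants attention is the invocation of Carath\'eodory to pass from the assertion ``the extremal rays are generated by the $T_n(\lambda)$'' to an actual finite conic combination; this is standard for closed proper cones in finite-dimensional spaces and is the only non-algebraic ingredient of the argument.
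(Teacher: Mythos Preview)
Your proof is correct and follows essentially the same route as the paper: both directions rest on Proposition \ref{cvS} and the observation that $x\bigl((\lambda,\lambda^2,\dots,\lambda^{n-1})\bigr)=T_n(\lambda)$, with the coefficients forced to be convex by reading the diagonal. Your version is simply more explicit than the paper's (which asserts the convex decomposition without naming Carath\'eodory or isolating the step $\sum_j c_j=1$), but the argument is the same.
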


\begin{proof} If $x(\xi)$ is positive, then it is a convex combination $\displaystyle\sum_{j=1}^k\alpha_j T_n(\lambda_j)$
of matrices of the form $T_n(\lambda_j)$, where
$\lambda_j\in S^1$. Thus, $\xi_\ell=\displaystyle\sum_{j=1}^k\alpha_j\lambda_j^\ell$, for each $\ell$.

Conversely, if $\xi\in \mbox{\rm Conv}\,\mathfrak P_{n-1}$, then there exist convex coefficients $\alpha_j$ and 
elements $\lambda_j\in S^1$ such that each
$\xi_\ell=\displaystyle\sum_{j=1}^k\alpha_j\lambda_j^\ell$. Because 
$\displaystyle\sum_{j=1}^k\alpha_j T_n(\lambda_j)=x(\xi)$, the matrix $x(\xi)$ is positive.
\end{proof}

%%%%%%%%%%%%%%%%%%%%%%%%%%%%%%
\section{Separability}

%%%%%%%%%%%%%%%%%%%%%%%%%%%%%%
\subsection{The Gurvits Separation Theorem}

An elegant theorem of Gurvits, explained in a joint paper with Burnam \cite{gurvits--burnam2002}, 
states 
\[
\left(C(S^1)^{(n)}\omin \M_m(\mathbb C)\right)_+ = (C(S^1)^{(n)})_+\otimes_{\rm sep} \left(\M_m(\mathbb C)\right)_+.
\]
Given the non-simplicial nature of the cones $(C(S^1)^{(n)})_+$ and $\left(\M_m(\mathbb C)\right)_+$, Gurvits' result
is remarkable, and a great deal of effort has gone into understanding this separability result. For this reason a number of 
expositions of Gurvits' original proof, 
such as those in \cite{qian--chen--chu2020,yang--xie--stoica2016}, or the development of alternative proofs, 
such as those in \cite{ando2004,ando2013,farenick--mcburney2023}, 
have appeared in the literature.

Although all the steps in Gurvits' argument are indicated in the joint paper with Burnam \cite{gurvits--burnam2002}, 
the proof of one of the key steps, which is stated below as Lemma \ref{douglas lem2}, 
is missing in every exposition I am aware of. Therefore, the purpose of this expository
subsection is to fill this gap in the published literature, 
drawing inspiration from the well-known majorisation theorem of Douglas \cite{douglas1966} 
to prove the aforementioned ``missing'' lemma of \cite{gurvits--burnam2002}. For completeness, 
the entirety of Gurvits' argument is also presented.

\begin{lemma}\label{douglas lem2} The following statements are equivalent for $q\times r$
complex matrices $x$ and $y$:
\begin{enumerate} 
\item $yy^*=xx^*$;
\item there is a $r\times r$ unitary
matrix $w$ such that $y=xw$.
\end{enumerate}
\end{lemma}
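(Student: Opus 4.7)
The direction $(2)\Rightarrow(1)$ will be immediate: if $y=xw$ with $w^*w=ww^*=I_r$, then $yy^*=x(ww^*)x^*=xx^*$. The substance is the converse, which I will prove by constructing the required unitary directly from the way $x^*$ and $y^*$ act on $\mathbb{C}^q$, in the spirit of Douglas' original range-inclusion argument.

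My starting point will be that the hypothesis $xx^*=yy^*$ is equivalent to the pointwise norm identity
\[
\|x^*\eta\|=\|y^*\eta\|, \qquad \eta\in\mathbb{C}^q,
\]
obtained by pairing each side with $\eta$. I will use this identity in two ways. First, it gives $\ker x^*=\ker y^*$, which means that $x$ and $y$ have the same range in $\mathbb{C}^q$ and a common rank $k$; equivalently, $\operatorname{ran} x^*$ and $\operatorname{ran} y^*$ are $k$-dimensional subspaces of $\mathbb{C}^r$, so their orthogonal complements have the common dimension $r-k$. Second, it justifies the definition of a linear map
\[
w_0\colon \operatorname{ran} x^*\longrightarrow\operatorname{ran} y^*, \qquad w_0(x^*\eta):=y^*\eta,
\]
since $x^*\eta=x^*\eta'$ forces $y^*(\eta-\eta')=0$; the same identity shows $w_0$ is isometric, so $w_0$ is a unitary from $\operatorname{ran} x^*$ onto $\operatorname{ran} y^*$.

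To close, I will extend $w_0$ to an $r\times r$ unitary $u$ on $\mathbb{C}^r$ by choosing any unitary between $(\operatorname{ran} x^*)^\perp$ and $(\operatorname{ran} y^*)^\perp$, which is available by the dimension count above. By construction $u\,x^*=y^*$ as maps $\mathbb{C}^q\to\mathbb{C}^r$, and taking adjoints yields $y=x\,u^*$, so $w:=u^*$ is the required unitary. No step in this plan looks hard; the only delicate point is the well-definedness of $w_0$ on $\operatorname{ran} x^*$, and that rests squarely on the norm identity established at the very start.
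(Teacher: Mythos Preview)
Your proof is correct and follows essentially the same Douglas-style construction as the paper: both build a partial isometry between $\operatorname{ran} x^*$ and $\operatorname{ran} y^*$ using the norm identity $\|x^*\eta\|=\|y^*\eta\|$, then extend it to a full unitary on $\mathbb{C}^r$ via the orthogonal complements. The only cosmetic difference is that the paper defines its partial isometry in the direction $y^*\xi\mapsto x^*\xi$ and verifies $y=xw$ by acting on vectors, whereas you go the other way and finish by taking adjoints; this is just a matter of obtaining $w$ versus $w^*$ first.
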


\begin{proof} It is clear that (2) implies (1). To prove that (1) implies (2), let
$\mbox{ran}\,a$ and $\ker a$ denote, respectively, the range and null-space of a linear 
transformation $a$, and view the matrices
$x$ and $y$ as linear transformations of $\mathbb C^q$ into $\mathbb C^r$, 
which we consider as Hilbert spaces with respect to their standard
inner products.
The equation $xx^*=yy^*$ implies that the function $y^*\xi\mapsto x^*\xi$ is a well-defined isometric linear
transformation $\tilde w_0$ of $\mbox{ran}\,y^*$ into $\mbox{ran}\,x^*$. Because $\tilde w_0$ has 
a linear inverse given by $x^*\xi\mapsto y^*\xi$, the linear transformation $\tilde w_0$ is an
isomorphism of the vector spaces $\mbox{ran}\,y^*$ and $\mbox{ran}\,x^*$. Extend the domain of $\tilde w_0$
to all of $\mathbb C^q$ to obtain a linear transformation $w_0:\mathbb C^q\rightarrow\mathbb C^r$ whereby
$w_0\gamma=\tilde w_0\gamma$,  for $\gamma\in\mbox{ran}\,y^*$, and $w_0\eta=0$, 
for $\eta\in\left(\mbox{ran}\,y^*\right)^\bot=\ker y$.

An arbitrary vector $\omega\in\mathbb C^q$ has the form $\omega=\gamma+\eta$, for some 
$\gamma  \in\mbox{ran}\,y^*$ and $\eta\in \ker y$. Thus, if $\gamma=y^*\xi \in\mbox{ran}\,y^*$,
for some $\xi\in\mathbb C^r$,
then 
\[
y\omega=
y\left(y^*\xi+\eta\right)= yy^*\xi = xx^*\xi = x\left(w_0(y^*\xi)+w_0\eta\right) = xw_0(y^*\xi+\eta)=xw_0\omega.
\]
Hence, $y=xw_0\omega$, for all vectors $\omega\in\mathbb C^q$, proving that $y=xw_0$.

The isomorphism of the subspaces $\mbox{ran}\,y^*$ and $\mbox{ran}\,x^*$ of $\mathbb C^q$ implies an
isomorphism of their orthogonal complements, $\ker y$ and $\ker x$, respectively. Choose any orthonormal bases of
$\ker y$ and $\ker x$, respectively, and let $\tilde w_1$ be the linear isomorphism that maps the orthonormal basis of 
$\ker y$ onto the orthonormal basis of $\ker x$, and extend the definition of $\tilde w_1$ to a linear transformation 
$w_1$ on 
all of $\mathbb C^q$ by setting $w_1\delta=0$ for all $\delta\in\left(\ker y\right)^\bot=\mbox{ran}\,y^*$.

The linear transformations $w_0$ and $w_1$ have orthogonal ranges, and $w_0^*w_0$ is the projection with
range $\mbox{ran}\,y^*$ and $w_1^*w_1$ is the (complementary) projection with range $\mbox{ran}\,x^*$. Hence,
if $w=w_0+w_1$, then $w^*w$ is the identity map on $\mathbb C^r$, which implies $w^{-1}=w^*$, proving $w$ is unitary.
Finally, the equality $\mbox{ran}\,w_1=\ker x$ implies $xw=xw_0+xw_1=xw_0=y$.
\end{proof}

Before proving Gurvits' Theorem, note that, 
if $u$ is a unitary element of a unital $C^*$-algebra $\A$, then
by functional calculus, we may evaluate the universal Toeplitz matrix $T_n$ at $u$ to obtain
\[
T_n(u)=\left[\begin{array}{cccc} 1 & u^{-1}  & \dots & u^{-n+1} \\ u & 1 & \ddots & \vdots \\
\vdots & \ddots & \ddots & u^{-1} \\ u^{n-1}&\dots &u&1 \end{array}\right],
\]
which is a positive element of $\M_n(\A)$.  

\begin{theorem}[Gurvits]\label{sep2} For all $n,m\geq 2$,
\[
(C(S^1)^{(n)})_+\otimes_{\rm sep} \left(\M_m(\mathbb C)\right)_+ =
\left(C(S^1)^{(n)}\omin \M_m(\mathbb C)\right)_+.
\]
\end{theorem}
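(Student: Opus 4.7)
My plan is to prove the two inclusions separately. The inclusion
$(C(S^1)^{(n)})_+\otimes_{\rm sep} (\M_m(\mathbb C))_+ \subseteq (C(S^1)^{(n)}\omin \M_m(\mathbb C))_+$
is immediate from Proposition \ref{tc}, since $(C(S^1)^{(n)}\omin \M_m(\mathbb C))_+$ is a tensor cone and therefore contains the separability cone by definition. The substantive content of the theorem is the reverse inclusion, and this is what I will focus on.

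For the reverse inclusion, I will begin with $X \in (C(S^1)^{(n)}\omin \M_m(\mathbb C))_+$, viewed as a positive block Toeplitz matrix $X = \sum_{\ell=-n+1}^{n-1} r_\ell \otimes X_\ell$ in $\M_{nm}(\mathbb C)_+$ with blocks $X_\ell \in \M_m(\mathbb C)$. Factor $X = V^*V$ with $V : \mathbb C^{nm} \to \mathbb C^N$, and partition the columns of $V$ as $V = [\,V_0 \mid V_1 \mid \cdots \mid V_{n-1}\,]$, so that each $V_i : \mathbb C^m \to \mathbb C^N$. The block Toeplitz structure forces $V_i^* V_j = X_{j-i}$ for every admissible $i,j$, and in particular $V_i^* V_i = X_0$ for every $i$. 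Applying Lemma \ref{douglas lem2} to the $m \times N$ matrices $V_0^*$ and $V_i^*$ then yields $N \times N$ unitaries $w_i$ with $V_i = w_i^* V_0$.

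The decisive step will be to replace the family $\{w_i\}$ with a single unitary $U$, possibly on a larger ambient space $\widetilde{\mathcal K} \supseteq \mathbb C^N$, for which $V_i = U^i V_0$ holds, and this is where I expect the main difficulty to lie. The observation that drives the construction is that the assignment $V_i \xi \mapsto V_{i+1} \xi$ defines a well-defined linear isometry on $\operatorname{span}\{V_0 \mathbb C^m, \ldots, V_{n-2} \mathbb C^m\}$: the Toeplitz relation gives
\[
\Bigl\| \sum_i V_{i+1} \xi_i \Bigr\|^2 = \sum_{i,j} \langle X_{i-j} \xi_i, \xi_j \rangle = \Bigl\| \sum_i V_i \xi_i \Bigr\|^2.
\]
A standard Halmos/Nagy unitary dilation will extend this partial isometry to a unitary $U$ on an enlarged finite-dimensional space $\widetilde{\mathcal K}$, so that $X_k = V_0^* U^k V_0$ holds for $|k| \leq n-1$.

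With this reduction in hand, the conclusion is delivered by the spectral theorem. Writing $U = \sum_\ell \lambda_\ell P_\ell$ with $\lambda_\ell \in S^1$ and mutually orthogonal projections $P_\ell$, and setting $A_\ell = V_0^* P_\ell V_0 \in (\M_m(\mathbb C))_+$, I would compute
\[
X = \sum_{k=-n+1}^{n-1} r_k \otimes V_0^* U^k V_0 = \sum_\ell \Bigl(\sum_{k=-n+1}^{n-1} \lambda_\ell^k r_k\Bigr) \otimes A_\ell = \sum_\ell T_n(\lambda_\ell) \otimes A_\ell,
\]
exhibiting $X$ as a separable element of the left-hand cone. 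The main obstacle I anticipate is precisely the dilation step: the unitaries $w_i$ furnished by Lemma \ref{douglas lem2} are not unique, and the proof will hinge on exploiting this freedom to synthesise a single generating unitary $U$.
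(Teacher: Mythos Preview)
Your proposal is correct, and the overall strategy --- factor $X$, exploit the Toeplitz structure to realise the blocks as $X_k = V_0^* U^k V_0$ for a single unitary $U$, then diagonalise $U$ --- is the same as the paper's. The difference lies in how you manufacture the single unitary. Two remarks. First, your application of Lemma~\ref{douglas lem2} to the individual pairs $(V_0^*, V_i^*)$ to produce unitaries $w_i$ is never used in what follows and can simply be dropped. Second, the Halmos/Nagy dilation to a larger space is unnecessary: the shift isometry you construct has initial and final spaces of the same (finite) dimension, so it extends directly to a unitary on $\mathbb C^N$ itself. The paper streamlines this step further still by a \emph{single} application of Lemma~\ref{douglas lem2}, not to individual blocks but to the stacked ones: writing $x = yy^*$ with $y$ partitioned into $m\times r$ blocks $y_0,\ldots,y_{n-1}$, the Toeplitz structure forces the two $(n-1)m\times r$ matrices
\[
y_U=\begin{bmatrix} y_0\\ \vdots\\ y_{n-2}\end{bmatrix},\qquad
y_L=\begin{bmatrix} y_1\\ \vdots\\ y_{n-1}\end{bmatrix}
\]
to satisfy $y_U y_U^* = y_L y_L^*$ (both equal the leading $(n-1)\times(n-1)$ block of $x$), and the lemma then delivers one $r\times r$ unitary $w$ with $y_L = y_U w$, i.e., $y_k = y_{k-1}w$ for every $k$ at once. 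This bypasses both the hand-built shift isometry and the dilation step you flagged as the main obstacle; your approach works, but the stacked application of the lemma is what makes the argument clean.
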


\begin{proof} Suppose $x=\displaystyle\sum_{\ell=-n+1}^{n-1} r_\ell\otimes a_\ell
\in\left(C(S^1)^{(n)}\omin \M_m(\mathbb C)\right)_+$.
Set $q=nm$ and $r=\mbox{rank}\,x$. By 
the Spectral Theorem, 
$x$ has the form $x=udu^*$ for some unitary matrix $u$ and diagonal matrix $d$ whose diagonal entries are eigenvalues $\alpha_j$ of $x$. 
Without loss of generality, assume the first $r$ diagonal entries of $d$ are the nonzero eigenvalues of $x$ and 
let $g$ be the $r\times r$ diagonal matrix with diagonal 
entries $\alpha_j^\frac{1}{2} $
 and $z$ be the $q\times r$ rectangular matrix
$z=\begin{bmatrix}
g\\ 0 \\
\end{bmatrix}$,
where ``$0$'' in the matrix above is a $(q-r)\times q$ matrix of zeroes if $r<q$, or is absent if $r=q$.
Because 
$zz^*=d$, 
if we let $y=uz$, which is a $q\times r$ matrix,
we then obtain $yy^*=uzz^*u^*=udu^*=x$.

As below, partition the matrix $y$
as a column of $n$ matrices $y_k$, each of dimension $m\times r$, and then define $y_U$ and $y_L$:
\[
y=
\begin{bmatrix}
y_0 \\ y_1 \\ \vdots \\ y_{n-1} \\
\end{bmatrix}, \quad
y_U=
\begin{bmatrix}
y_0 \\ \vdots \\ y_{n-2} \\
\end{bmatrix},
\quad
y_L=
\begin{bmatrix}
y_1 \\ \vdots \\ y_{n-1} \\
\end{bmatrix}.
\]
The matrix $y_Uy_U^*=y_Ly_L^*$ coincides with 
the upper-left $(n-1)\times (n-1)$ block of the matrix $x$. 

Apply Lemma \ref{douglas lem2} to obtain
$y_L=y_Uw$ for some $r\times r$ unitary matrix $w$. Thus, $y_k=y_{k-1}w$, for each $k=1,\dots n-1$.
The Toeplitz structure of $x$ and the equality $y_Uy_U^*=y_Ly_L^*$ yield:
\[
a_0=y_0y_0^*,\quad
a_1= y_0wy_0^*, \quad
a_2= y_0w^2y_0^* , \quad
\dots\quad
a_{n-1}=y_0w^{n-1}y_0^* .
\]
Because $a_{-\ell}=a_\ell^*$ and $w^{-\ell}=(w^{\ell})^{*}$ for every $\ell$, we deduce 
$a_{\ell}=y_0w^{\ell}y_0^*$ for all $\ell=-n+1,...,n-1$. 

Set $z=y_0$ and  express $x$ as
\[
x=(1_n\otimes z)\left(\displaystyle\sum_{\ell=-n+1}^{n-1} r_\ell\otimes w^\ell \right)(1_n\otimes z)^*= 
\displaystyle\sum_{\ell=-n+1}^{n-1} r_\ell\otimes (zw^\ell z^*).
\]
Let $\lambda_1,\dots,\lambda_k$ denote the distinct eigenvalues of $W$.
By the Spectral Theorem, there are pairwise-orthogonal projections $p_1,\dots,p_k$ 
such that $w^\ell=\displaystyle\sum_{j=1}^k\lambda_j^\ell p_j$,
for all $\ell\in\mathbb Z$. Therefore,
\[
T_n(w)=\displaystyle\sum_{\ell=-n+1}^{n-1}r_\ell\otimes (\displaystyle\sum_{j=1}^{k} \lambda_j^\ell p_j) 
= \displaystyle\sum_{j=1}^{k}\displaystyle\sum_{\ell=-n+1}^{n-1} \lambda_j^\ell r_\ell\otimes p_j 
= \displaystyle\sum_{j=1}^{k} T_n(\lambda_j)\otimes p_j.
\]
Hence,
\[
x=(1_n\otimes z) T_n(w) (1_n\otimes z)^*=\displaystyle\sum_{j=1}^{k} T_n(\lambda_j)\otimes (zp_j z^*)
=\displaystyle\sum_{j=1}^k T_n(\lambda_j)\otimes b_j,
\]
where $b_j=zp_jz^*$ for each $j$. Thus, $x\in (C(S^1)^{(n)})_+\otimes_{\rm sep} \left(\M_m(\mathbb C)\right)_+$.
\end{proof}

It is sometimes convenient to recast Theorem \ref{sep2} as follows.

\begin{corollary}\label{g-sep} If $x=\displaystyle\sum_{\ell=-n+1}^{n-1} r_\ell\otimes a_\ell
\in\left(C(S^1)^{(n)}\omin \M_m(\mathbb C)\right)_+$, 
then there exist $k\in\mathbb N$, $\lambda_j\in S^1$, and 
$b_j\in\M_m(\mathbb C)_+$
such that  
\begin{equation}\label{e:sep3}
a_\ell= \sum_{j=1}^k \lambda_j^\ell b_j,
\end{equation}
for each $\ell=-n+1, \dots, n-1$.
\end{corollary}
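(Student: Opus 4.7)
The plan is to derive this corollary as a direct, almost syntactic, consequence of Gurvits' Theorem (Theorem \ref{sep2}), by expanding the separable decomposition in the canonical linear basis $\{r_\ell\}_{\ell=-n+1}^{n-1}$ of $C(S^1)^{(n)}$ and matching coefficients. Since $x\in\left(C(S^1)^{(n)}\omin\M_m(\mathbb C)\right)_+$, Theorem \ref{sep2} first produces an integer $k\in\mathbb N$, scalars $\lambda_1,\dots,\lambda_k\in S^1$, and matrices $b_1,\dots,b_k\in\M_m(\mathbb C)_+$ such that
\[
x=\sum_{j=1}^k T_n(\lambda_j)\otimes b_j.
\]

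Next, I would recall that $T_n(\lambda)=\displaystyle\sum_{\ell=-n+1}^{n-1}\lambda^\ell r_\ell$ for every $\lambda\in S^1$, which is immediate from the definition of the universal Toeplitz matrix $T_n$ and the fact that $r_\ell$ is the Toeplitz matrix with $1$'s on the $\ell$-th (sub- or super-) diagonal. Substituting this expansion into the separable decomposition above and interchanging the finite sums yields
\[
x=\sum_{\ell=-n+1}^{n-1} r_\ell \otimes \left(\sum_{j=1}^k \lambda_j^\ell b_j\right).
\]

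Finally, since $\{r_\ell\}_{\ell=-n+1}^{n-1}$ is a linear basis for $C(S^1)^{(n)}$, the representation $x=\displaystyle\sum_\ell r_\ell\otimes a_\ell$ is unique, so comparing coefficients gives $a_\ell=\displaystyle\sum_{j=1}^k \lambda_j^\ell b_j$ for each $\ell$. The only conceivable obstacle is the coefficient-matching step, but this is trivial here because the basis $\{r_\ell\}$ is fixed and linearly independent in $C(S^1)^{(n)}$. Thus no hard work is needed beyond invoking Theorem \ref{sep2}; the corollary is essentially a reindexing of its conclusion.
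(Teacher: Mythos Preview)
Your proposal is correct and matches the paper's treatment: the paper presents this corollary as a direct recasting of Theorem~\ref{sep2} without a separate proof, and your argument (expand $T_n(\lambda)=\sum_\ell \lambda^\ell r_\ell$, interchange sums, match coefficients against the basis $\{r_\ell\}$) is exactly how that recasting is carried out. The one small point worth noting is that the \emph{statement} of Theorem~\ref{sep2} asserts only cone equality, whereas you invoke the specific decomposition $x=\sum_j T_n(\lambda_j)\otimes b_j$; this is justified either because that decomposition is explicitly derived in the theorem's proof, or because any separable representation $x=\sum_j a_j\otimes b_j$ with $a_j\in (C(S^1)^{(n)})_+$ can be refined to one with $a_j=T_n(\lambda_j)$ via Proposition~\ref{cvS}.
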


%%%%%%%%%%%%%%%%%%%%
\subsection{The matrix convex hull of $\mathfrak P_n$}

Recall that $\mathfrak P_n$ is the subset of the $n$-torus consisting of $n$-tuples of the
form $(\lambda,\lambda^2,\dots,\lambda^n)$, for $\lambda\in S^1$.

One can use Gurvits' Theorem to reframe Proposition \ref{foreshadow} in the setting of matrix convexity
\cite{effros--winkler1997}.

\begin{definition} The \emph{matrix convex hull}
of $\mathfrak P_n$ is the sequence
$\mbox{\rm m-Conv}\mathfrak P_n=\left( \mathcal L_k\right)_{k\in\mathbb N}$ of subsets $ \mathcal L_k$
of $\M_k(\mathbb C)^n$ defined by
\[
\mathcal L_k=\left\{ \left(\sum_{j=1}^g\lambda_jq_j, %\sum_{j=1}^g\lambda^2_jq_j,
\dots, \sum_{j=1}^g\lambda^n_jq_j\right)\,|\,
g\in\mathbb N,\,
\lambda_j\in S^1,\, q_j\in\M_k(\mathbb C)_+,\, \sum_{j=1}^g q_j=1_k\right\}.
\]
\end{definition}

Similar to Proposition \ref{foreshadow}, we have the following result.

\begin{proposition}\label{nc cnvx} A tuple
$a=(a_1,\dots,a_{n-1})$ of $k\times k$ matrices
is in the matrix convex hull of $\mathfrak P_{n-1}$ if and only if
the block Toeplitz matrix
\[
x(a)= \left[\begin{array}{cccc} 1_k & a_1^* & \dots & a_{n-1}^* \\ a_1 & 1_k & \ddots & \vdots \\
\vdots & \ddots & \ddots & a_1^* \\  a_{n-1}&\dots &a_1&1_k\end{array}\right]
\]
is positive.
\end{proposition}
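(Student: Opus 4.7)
The plan is to invoke Gurvits' Theorem (in the form of Corollary \ref{g-sep}) as the essential bridge between the matrix-convex-hull description and positivity of the block Toeplitz matrix $x(a)$. The proposition is the block analogue of Proposition \ref{foreshadow}, with Gurvits' result playing the role that the extremal-ray description of positive Toeplitz matrices (Proposition \ref{cvS}) played in the scalar setting.

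First I would handle the easy direction. Assume $a=(a_1,\dots,a_{n-1})\in\mathcal{L}_{k}$, so there exist $g\in\mathbb{N}$, $\lambda_j\in S^1$, and $q_j\in \M_k(\mathbb{C})_+$ with $\sum_{j=1}^g q_j=1_k$ and $a_\ell=\sum_{j=1}^g \lambda_j^\ell q_j$ for every $\ell=1,\dots,n-1$. Using the canonical basis $\{r_\ell\}$ of $C(S^1)^{(n)}$, expanding $x(a)=\sum_{\ell=-n+1}^{n-1} r_\ell\otimes a_\ell$ and using $a_{-\ell}=a_\ell^*=\sum_j\lambda_j^{-\ell}q_j$, a direct rearrangement gives
\[
x(a)=\sum_{\ell=-n+1}^{n-1}\sum_{j=1}^{g} \lambda_j^\ell r_\ell\otimes q_j=\sum_{j=1}^{g} T_n(\lambda_j)\otimes q_j,
\]
which is manifestly separable and hence positive.

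For the converse, suppose $x(a)$ is positive. Since $C(S^1)^{(n)}$ is an operator subsystem of $\M_n(\mathbb{C})$, positivity of $x(a)$ in $\M_{nk}(\mathbb{C})=\M_n(\mathbb{C})\omin\M_k(\mathbb{C})$ coincides with positivity in $C(S^1)^{(n)}\omin\M_k(\mathbb{C})$. Applying Corollary \ref{g-sep} yields $\lambda_j\in S^1$ and $b_j\in \M_k(\mathbb{C})_+$ with $a_\ell=\sum_j \lambda_j^\ell b_j$ for every $\ell=-n+1,\dots,n-1$. Taking $\ell=0$ gives $1_k=a_0=\sum_j b_j$, so the tuple $(b_j,\lambda_j)$ exhibits $a$ as an element of $\mathcal{L}_k$.

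I do not anticipate a significant obstacle here, since Gurvits' Theorem does essentially all the substantive work and the matrix-convex-hull definition is tailor-made to match the output of Corollary \ref{g-sep}. The only minor point worth verifying explicitly is the identification of positivity in $C(S^1)^{(n)}\omin\M_k(\mathbb{C})$ with ordinary matrix positivity in $\M_{nk}(\mathbb{C})$, which is immediate since operator subsystems inherit their positive cones from the ambient C$^*$-algebra.
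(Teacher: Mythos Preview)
Your proposal is correct and follows essentially the same approach as the paper, which simply remarks that the argument mirrors that of Proposition~\ref{foreshadow} with the matrix convex combinations of equation~(\ref{e:sep3}) (i.e., Corollary~\ref{g-sep}) replacing the scalar extremal-ray decomposition. You have written out in detail precisely what the paper's one-line proof indicates.
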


\begin{proof} The argument is the same as that of Proposition \ref{foreshadow}, except 
one uses the matrix convex combinations of equation (\ref{e:sep3}).
\end{proof} 

 %%%%%%%%%%%%%%%%%%%%%%
\subsection{Positive $2\times 2$ block Toeplitz matrices with Toeplitz blocks}

\begin{proposition}\label{2-sep}
For every $n\geq2$,
\[
C(S^1)^{(2)}{}_+ \otimes_{\rm sep} C(S^1)^{(n)}_+ = \left(C(S^1)^{(2)}\omin C(S^1)^{(n)}\right)_+.
\]
\end{proposition}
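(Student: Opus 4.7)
The inclusion $C(S^1)^{(2)}{}_+\otimes_{\rm sep}C(S^1)^{(n)}_+\subseteq(C(S^1)^{(2)}\omin C(S^1)^{(n)})_+$ follows immediately from Proposition \ref{tc}, so the substance of the proposition lies in the reverse inclusion. The plan is to reduce this reverse inclusion to Gurvits' Theorem \ref{sep2} and then convert the ``free'' $\M_2(\mathbb C)$-factor produced there into a Toeplitz factor via the positive idempotent $\mathcal E_2$ of Proposition \ref{idempotent}. The argument trades on the slightly subtle fact that, although $\mathcal E_m$ is generally not a positive map for $m\ge 3$, the map $\mathcal E_2$ is positive.

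Given a positive element $X\in(C(S^1)^{(2)}\omin C(S^1)^{(n)})_+$, I will first observe that the complete order embedding $C(S^1)^{(2)}\subseteq\M_2(\mathbb C)$ shows $X$ is likewise positive in $\M_2(\mathbb C)\omin C(S^1)^{(n)}$, which by symmetry of the minimal tensor product coincides with $C(S^1)^{(n)}\omin\M_2(\mathbb C)$. Gurvits' Theorem \ref{sep2}, together with the explicit decomposition furnished by Corollary \ref{g-sep}, then gives
\[
X=\sum_{j=1}^{k} b_j \otimes T_n(\mu_j)
\]
for some $k\in\mathbb N$, $\mu_j\in S^1$, and $b_j\in\M_2(\mathbb C)_+$.

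The final step will be to apply the linear map $\mathcal E_2\otimes\mbox{id}$ to both sides of the above identity. On the one hand, $X$ belongs to $C(S^1)^{(2)}\otimes C(S^1)^{(n)}$ and $\mathcal E_2$ is the identity on $C(S^1)^{(2)}$ (being idempotent with range $C(S^1)^{(2)}$), so the left-hand side is unchanged. On the other hand, the right-hand side becomes $\sum_j\mathcal E_2(b_j)\otimes T_n(\mu_j)$, and positivity of $\mathcal E_2$ (Proposition \ref{idempotent}) places each $\mathcal E_2(b_j)$ in $C(S^1)^{(2)}_+$. Hence
\[
X=\sum_{j=1}^{k}\mathcal E_2(b_j)\otimes T_n(\mu_j) \in C(S^1)^{(2)}_+\otimes_{\rm sep}C(S^1)^{(n)}_+,
\]
as required.

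I do not expect any individual step to pose serious difficulty. The real conceptual obstacle, and the reason the argument is tailored to the outer block size $2$, is that $\mathcal E_m$ fails to be positive for $m\ge 3$, so this particular strategy genuinely collapses beyond $C(S^1)^{(2)}$ in the first factor. At $m=2$ the positivity of $\mathcal E_2$ is, in effect, the arithmetic--geometric--mean inequality used in Proposition \ref{idempotent}, and that is the essential input that makes the whole scheme work.
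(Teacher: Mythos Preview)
Your argument is correct and follows essentially the same route as the paper's own proof: embed into $\M_2(\mathbb C)\omin C(S^1)^{(n)}$, apply Gurvits' Theorem to obtain a separable decomposition with $\M_2(\mathbb C)_+$ tensor factors, and then push those factors into $C(S^1)^{(2)}_+$ via the positive idempotent $\mathcal E_2$. If anything, your version is slightly more explicit in invoking Corollary~\ref{g-sep} to make clear that the second tensor factors are already positive Toeplitz matrices $T_n(\mu_j)$, whereas the paper's write-up leaves that point implicit.
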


\begin{proof} By the Gurvits Separation Theorem, there exist $c_j\in\M_2(\mathbb C)_+$ and 
$s_j\in C(S^1)^{(n)}_+$ such that
\[
x=\sum_{j=1}^{m} c_j\otimes s_j.
\]
The idempotent 
$\mathcal E_2:\M_2(\mathbb C)\rightarrow C(S^1)^{(2)}$ 
(Proposition \ref{idempotent})) is a positive linear map, 
and so
\[
x=\mathcal E_2^{(n)}(x)= \sum_{j=1}^{m} \mathcal E_2(c_j)\otimes s_j,
\]
which implies the Toeplitz matrices $b_j=\mathcal E_2(c_j)$ are positive.
\end{proof}

It was noted in \cite{farenick--mcburney2023} that, if 
\[
x=\left[ \begin{array}{cc} a & c^* \\ c & a \end{array}\right]
\]  
is a positive invertible block-Toeplitz matrix, and if both $a$ and $c$ are $2\times 2$
Toeplitz matrices, then $x$ is separable in the operator system
$C(S^1)^{(2)}\omin C(S^1)^{(2)}$. 
The methods of the present paper yield the following improved result, 
in which invertibility and $2\times 2$ $a$ and $c$ are not required,
as a consequence of 
Proposition \ref{2-sep} and the structure of the positive cone of Toeplitz matrices.

\begin{corollary}\label{2-sep-cor}
If $a,c\in C(S^1)^{(n)}$ are such that $x=\left[ \begin{array}{cc} a & c^* \\ c & a \end{array}\right]$ is positive, 
then $x$ is separable. Thus,
there exist $k\in\mathbb N$, $\alpha_j\in\mathbb R_+$, and $\lambda_j,\mu_j\in S^1$ such that
\begin{equation}\label{e:dec}
\left[ \begin{array}{cc} a & c^* \\ c & a \end{array}\right] = 
\sum_{j=1}^k \alpha_j \left[ \begin{array}{cc} 1_n & \lambda_j^{-1}T_n(\mu) \\  \lambda_jT_n(\mu) & 1_n \end{array}\right].
\end{equation}
\end{corollary}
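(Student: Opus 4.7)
The approach is to derive this corollary as a direct consequence of Proposition \ref{2-sep} combined with the extremal-ray description from Proposition \ref{cvS}. First I would identify the block Toeplitz matrix $x=\left[\begin{array}{cc} a & c^* \\ c & a\end{array}\right]$ with the element $r_0\otimes a + r_1\otimes c + r_{-1}\otimes c^*$ of $C(S^1)^{(2)}\otimes C(S^1)^{(n)}$, using the canonical linear basis $\{r_{-1},r_0,r_1\}$ of $C(S^1)^{(2)}$. The hypothesis that $x$ is positive as an operator on $\mathbb C^{2n}$ is exactly positivity in $\M_2(\mathbb C)\omin \M_n(\mathbb C)=\M_{2n}(\mathbb C)$, and the complete order inclusion $C(S^1)^{(2)}\omin C(S^1)^{(n)} \subseteq_{\rm coi} \M_2(\mathbb C)\omin \M_n(\mathbb C)$ then places $x$ in the base positive cone of $C(S^1)^{(2)}\omin C(S^1)^{(n)}$.

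Next, by Proposition \ref{2-sep}, this positive cone coincides with the separability cone $(C(S^1)^{(2)})_+\otimes_{\rm sep}(C(S^1)^{(n)})_+$, so one immediately obtains an expression $x=\sum_i b_i\otimes a_i$ with positive $2\times 2$ Toeplitz $b_i\in (C(S^1)^{(2)})_+$ and positive $n\times n$ Toeplitz $a_i\in (C(S^1)^{(n)})_+$. This alone establishes the separability claim.

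For the explicit parametrised decomposition, I would invoke Proposition \ref{cvS}, which identifies the extremal rays of both Toeplitz cones as those generated by $T_k(\lambda)$ for $\lambda\in S^1$. Because each of these positive cones is a closed, pointed, generating convex cone in the finite-dimensional real vector space of selfadjoint Toeplitz matrices, Carath\'eodory's theorem applied to the cone gives finite conic decompositions $b_i = \sum_\ell \beta_{i,\ell}\,T_2(\lambda_{i,\ell})$ and $a_i = \sum_m \gamma_{i,m}\,T_n(\mu_{i,m})$ into extremal generators. Substituting these into $x=\sum_i b_i\otimes a_i$ and computing the elementary tensor
\[
T_2(\lambda)\otimes T_n(\mu) = \left[\begin{array}{cc} T_n(\mu) & \lambda^{-1}T_n(\mu) \\ \lambda T_n(\mu) & T_n(\mu)\end{array}\right]
\]
produces a nonnegative combination of matrices of the form on the right-hand side of \eqref{e:dec}, with positive weights $\alpha_j = \beta_{i,\ell}\gamma_{i,m}$ and parameters $\lambda_j=\lambda_{i,\ell}$, $\mu_j=\mu_{i,m}$, yielding the claimed decomposition.

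There is no genuine obstacle here; the argument is essentially a stringing-together of Propositions \ref{2-sep} and \ref{cvS}, with the only mild technicality being the finite Carath\'eodory decomposition of each positive Toeplitz factor into its extremal rays---routine for a closed pointed cone in finite dimensions.
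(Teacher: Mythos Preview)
Your proposal is correct and follows essentially the same route as the paper: invoke Proposition~\ref{2-sep} to obtain a separable decomposition $x=\sum_i b_i\otimes a_i$ with each factor a positive Toeplitz matrix, then use the extremal-ray description of Proposition~\ref{cvS} to refine each factor into nonnegative combinations of $T_2(\lambda)$ and $T_n(\mu)$. Your added remarks (the Carath\'eodory argument and the explicit block form of $T_2(\lambda)\otimes T_n(\mu)$, whose diagonal blocks are $T_n(\mu)$ rather than the $1_n$ appearing in the displayed statement---evidently a typo there) only make the argument more explicit without changing its substance.
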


\begin{proof} Proposition \ref{2-sep} asserts $x=\displaystyle\sum_{i=1}^q  a_i\otimes b_i$, for some
positive $a_i\in C(S^1)^{(2)}$ and $b_i\in C(S^1)^{(n)}$. By Proposition \ref{cvS}, each 
$a_i$ and $b_i$ is a linear combination of matrices of the
form $T_2(\lambda)$ and $T_n(\mu)$ using nonnegative scalar coefficients. Hence, $x$ can be expressed in the form
(\ref{e:dec}) above.
\end{proof}

Another consequence of $2\times 2$ separability is in reference to 
Toeplitz-matrix moments, which is the problem of
extending positive $2\times 2$ block Toeplitz matrices with Toeplitz blocks
matrices to positive $n\times n$ block Toeplitz matrices with Toeplitz blocks.

\begin{corollary}\label{extension}
If $x_0,x_1$ are $m\times m$ Toeplitz matrices for which
$\left[ \begin{array}{cc} x_0 & x_1^* \\ x_1 & x_0\end{array}\right]$ is positive,
then, for every $n\geq 3$, there exist $m\times m$ Toeplitz matrices $x_2$,\dots, $x_{n-1}$ such that
\[
\left[ \begin{array}{cccc} x_0 & x_{1}^* & \dots & x_{n-1}^* \\
x_1 & x_0 & \ddots & \vdots \\ \vdots & \ddots & \ddots & x_1^* \\
x_{n-1} & \dots & x_1 & x_0 
\end{array}
\right]
\]
is positive.
\end{corollary}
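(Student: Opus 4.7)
The plan is to bootstrap directly from the separable decomposition supplied by Corollary~\ref{2-sep-cor}. Since $\left[\begin{smallmatrix} x_0 & x_1^* \\ x_1 & x_0 \end{smallmatrix}\right]$ is a positive $2\times 2$ block Toeplitz matrix with $m\times m$ Toeplitz blocks, that corollary produces a finite collection of nonnegative scalars $\alpha_1,\dots,\alpha_k$ together with points $\lambda_j,\mu_j\in S^1$ for which
\[
\left[ \begin{array}{cc} x_0 & x_1^* \\ x_1 & x_0 \end{array}\right] \;=\; \sum_{j=1}^k \alpha_j\, T_2(\lambda_j) \otimes T_m(\mu_j).
\]
Reading off the diagonal and subdiagonal blocks gives $x_0 = \sum_{j=1}^k \alpha_j T_m(\mu_j)$ and $x_1 = \sum_{j=1}^k \alpha_j \lambda_j T_m(\mu_j)$.

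Next, I would define the candidate Toeplitz moments
\[
x_\ell \;:=\; \sum_{j=1}^k \alpha_j\, \lambda_j^\ell\, T_m(\mu_j), \qquad \ell=2,\dots,n-1,
\]
each of which is a linear combination of the positive Toeplitz matrices $T_m(\mu_j)$ and is therefore again an element of $C(S^1)^{(m)}$. I then claim that the $n\times n$ block Toeplitz matrix $X$ assembled from the symbol $(x_0,x_1,\dots,x_{n-1})$ coincides with
\[
X \;=\; \sum_{j=1}^k \alpha_j\, T_n(\lambda_j) \otimes T_m(\mu_j),
\]
which, being a nonnegative combination of tensor products of the positive matrices $T_n(\lambda_j)$ and $T_m(\mu_j)$, is positive. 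Verification is a block-by-block check: the $(p,q)$-block of the right-hand side equals $\sum_{j=1}^k \alpha_j \lambda_j^{p-q} T_m(\mu_j)$, which matches $x_{p-q}$ when $p\geq q$ and matches $x_{q-p}^*$ when $p<q$, using that $\overline{\mu_j}=\mu_j^{-1}$ makes $T_m(\mu_j)$ selfadjoint and that $\overline{\lambda_j}=\lambda_j^{-1}$.

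The main obstacle has in fact already been surmounted upstream: it is Proposition~\ref{2-sep} (the Gurvits-style separability of positive $2\times 2$ block Toeplitz matrices with Toeplitz blocks) together with the extreme-ray description of $(C(S^1)^{(m)})_+$ in Proposition~\ref{cvS} that allows the positive Toeplitz pieces to be expressed as nonnegative combinations of matrices of the form $T_m(\mu)$. Given those two inputs, the extension collapses to a bookkeeping observation: once the phases $\lambda_j\in S^1$ and weights $\alpha_j\geq 0$ produced by the $2\times 2$ decomposition are fixed, their powers $\lambda_j^\ell$ simultaneously furnish all higher-order moments of positive type, so an $n$-block extension is available for every $n\geq 3$ at once, with no further analysis required.
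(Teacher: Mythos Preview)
Your proof is correct and follows essentially the same route as the paper: invoke Corollary~\ref{2-sep-cor} to write the $2\times 2$ block matrix as $\sum_j \alpha_j\, T_2(\lambda_j)\otimes T_m(\mu_j)$, then replace $T_2(\lambda_j)$ by $T_n(\lambda_j)$ to obtain a positive $n\times n$ block Toeplitz extension. Your version is slightly more explicit in reading off the individual blocks $x_\ell$ and verifying the block structure, but the argument is identical in substance.
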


\begin{proof} Corollary \ref{2-sep-cor} shows that
\[
\left[ \begin{array}{cc} x_0 & x_1^* \\ x_1 & x_0\end{array}\right]=\sum_{j=1}^k \alpha_j T_2(\lambda_j)\otimes T_m(\mu_j),
\]
for some $\lambda_j,\mu_j\in S^1$ and $\alpha_j\in\mathbb R_+$. Fix $n\geq 3$ and consider the matrix
\[
X=\sum_{j=1}^k \alpha_j T_n(\lambda_j)\otimes T_m(\mu_j).
\]
Then, $X$ is positive and has the form
\[
X= \left[ \begin{array}{cccc} x_0 & x_{1}^* & \dots & x_{n-1}^* \\
x_1 & x_0 & \ddots & \vdots \\ \vdots & \ddots & \ddots & x_1^* \\
x_{n-1} & \dots & x_1 & x_0 
\end{array}
\right],
\]
for some $x_2$,\dots, $x_{n-1}\in C(S^1)^{(m)}$.
\end{proof} 

As noted by Ozawa in \cite{ozawa2013}, 
a result of Rudin \cite{rudin1963} shows the extension theorem above (Corollary \ref{extension})
fails for $n\times n$ positive block-Toeplitz matrices with Toeplitz blocks, if $n\geq3$. Therefore, we are led to the following
conclusion.

\begin{proposition}\label{non-sep-bttp}
For every $n,m\geq 3$, the cone $(C(S^1)^{(n)} \omin C(S^1)^{(m)})_+$
admits entanglement.
\end{proposition}

\begin{proof}
If not, then, for a fixed $n,m\geq3$ and $x\in (C(S^1)^{(n)} \omin C(S^1)^{(m)})_+$, the matrix $x$ is separable. Thus,
\[
x=\sum_{k=1}^q \alpha_j T_n(\lambda_j)\otimes T_m(\mu_j),
\]
for some $q\in\mathbb N$, $\alpha_j\geq0$, and $\lambda_j,\mu_j\in S^1$. Hence, for any $N>n$, $x$
extends to a positive $N\times N$ block Toeplitz-matrix $\tilde x$ with $m\times m$ 
Toeplitz entries via
\[
\tilde x=\sum_{k=1}^q \alpha_j T_N(\lambda_j)\otimes T_m(\mu_j).
\]
However, this contradicts the results of Ozawa \cite{ozawa2013} and Rudin \cite{rudin1963}.
\end{proof}

%%%%%%%%%%%%%%%%%%%%%%
\subsection{Positive block-Toeplitz matrices with circulant blocks}

\begin{proposition}\label{btcb2} For any $n,m\geq 2$ and $\theta\in\mathbb R$, 
\[
\begin{array}{rcl}
(C(S^1)^{(n)})_+\otimes_{\rm sep} \left(C^{m,\theta}\right)_+
&=& \left(C(S^1)^{(n)}\omin C^{m,\theta} \right)_+ \\
&=& 
(C(S^1)^{(n)})_+\otimes_{{\rm sep}^*} \left(C^{m,\theta}\right)_+
\end{array}.
\]
\end{proposition}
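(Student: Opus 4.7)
The plan is to establish the three-cone equality by sandwiching the minimal tensor product cone between the separability and dual separability cones, and using Gurvits' Theorem together with the completely positive idempotent onto $C^{m,\theta}$.

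First I would observe that, by Proposition \ref{tc}, one already has
\[
(C(S^1)^{(n)})_+\otimes_{\rm sep}(C^{m,\theta})_+\;\subseteq\;\bigl(C(S^1)^{(n)}\omin C^{m,\theta}\bigr)_+\;\subseteq\;(C(S^1)^{(n)})_+\otimes_{{\rm sep}^*}(C^{m,\theta})_+,
\]
so it suffices to close the chain by showing that the minimal tensor cone is contained in the separability cone, or, alternatively, that the outer extremes coincide.

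The second of these is immediate from Theorem \ref{equality} and Proposition \ref{not classical}: the cone $(C^{m,\theta})_+$ is simplicial, so ``sep'' and ``sep$^*$'' coincide for it paired with any finite-dimensional operator system. The more substantive task is to exhibit an honest separable decomposition of an arbitrary positive element
\[
x\in\bigl(C(S^1)^{(n)}\omin C^{m,\theta}\bigr)_+.
\]
The plan here is to exploit the injectivity of $\omin$ together with the inclusion $C^{m,\theta}\subseteq\M_m(\mathbb C)$ of operator systems, which yields
\[
C(S^1)^{(n)}\omin C^{m,\theta}\;\subseteq_{\rm coi}\;C(S^1)^{(n)}\omin\M_m(\mathbb C).
\]
Viewing $x$ as a positive element on the right, Gurvits' Theorem (Theorem \ref{sep2}) delivers $a_j\in\bigl(C(S^1)^{(n)}\bigr)_+$ and $b_j\in\M_m(\mathbb C)_+$ with $x=\sum_{j=1}^k a_j\otimes b_j$.

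Finally I would apply the completely positive idempotent $\mathcal F_{m,\theta}:\M_m(\mathbb C)\rightarrow\M_m(\mathbb C)$ with range $C^{m,\theta}$ provided by Proposition \ref{idempotent2}. Since $x$ already lies in $C(S^1)^{(n)}\otimes C^{m,\theta}$, the ampliation $\mathrm{id}_{C(S^1)^{(n)}}\otimes\mathcal F_{m,\theta}$ fixes it; applying this ampliation to the Gurvits decomposition gives
\[
x=\sum_{j=1}^k a_j\otimes\mathcal F_{m,\theta}(b_j),
\]
and each $\mathcal F_{m,\theta}(b_j)$ is positive in $C^{m,\theta}$ by positivity of $\mathcal F_{m,\theta}$. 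This realises $x$ as an element of $(C(S^1)^{(n)})_+\otimes_{\rm sep}(C^{m,\theta})_+$, completing the chain of equalities.

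The only delicate point is making sure $(\mathrm{id}\otimes\mathcal F_{m,\theta})(x)=x$, which requires that $\mathcal F_{m,\theta}$ restricts to the identity on $C^{m,\theta}$; this is precisely the content of the idempotency statement $\mathcal F_{m,\theta}^2=\mathcal F_{m,\theta}$ together with $\mathrm{ran}\,\mathcal F_{m,\theta}=C^{m,\theta}$ from Proposition \ref{idempotent2}. Everything else is an immediate combination of Gurvits' Theorem with the functorial behaviour of $\omin$, so there is no real obstacle beyond assembling the pieces.
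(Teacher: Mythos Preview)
Your proof is correct, and the Gurvits-plus-idempotent argument you give for $\bigl(C(S^1)^{(n)}\omin C^{m,\theta}\bigr)_+\subseteq (C(S^1)^{(n)})_+\otimes_{\rm sep}(C^{m,\theta})_+$ is exactly what the paper does.

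The one genuine difference is in how you handle the outer equality $\mathrm{sep}=\mathrm{sep}^*$. You invoke Theorem \ref{equality} (the Aubrun--Lami--Palazuelos--Pl\'avala result) together with the simpliciality of $(C^{m,\theta})_+$; the paper instead gives a direct argument, using the identification $C^{m,\theta}\simeq\mathbb C^m$ to write $C(S^1)^{(n)}\omin C^{m,\theta}\simeq\bigoplus_1^m C(S^1)^{(n)}$ and then checking by hand that any $x$ in the $\mathrm{sep}^*$ cone has each direct summand positive. The paper's motivation is explicit in the introduction: it wants a ``purely algebraic'' proof of this proposition that does not rely on \cite{aubrun--ludovico--palazuelos2021}. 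Your route is shorter and entirely legitimate, but it leans on a much deeper external result; the paper's route is self-contained and elementary. Since you also supply the Gurvits argument, your write-up in fact contains both a quick proof and the core of the paper's intended proof---you could drop the appeal to Theorem \ref{equality} altogether, as the Gurvits step combined with Proposition \ref{tc} already forces all three cones to coincide.
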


\begin{proof} If $x=[a_{k-j}]_{k,j=1}^{n}\in C(S^1)^{(n)}\omin C^{m,\theta}$ is positive, then
$x\in \M_m(C(S^1)^{(n)})_+$. Therefore,
by Gurvits' Theorem, 
\[
a_\ell=\sum_{j=1}^k \lambda_j^\ell b_j, \mbox{ for every }\ell,
\]
for some $\lambda_j\in S^1$ and $b_j\in\M_m(\mathbb C)_+$. 
Apply the completely positive idempotent map $\mathcal F_{m,\theta}$ of Proposition \ref{idempotent2}  
to each $a_\ell$ to obtain
\begin{equation}\label{e:range of F_m}
a_\ell=\sum_{j=1}^k \lambda_j^\ell \mathcal F_{m,\theta}(b_j), \mbox{ for every }\ell.
\end{equation}
Because the matrices $b_j$ are positive, the generalised
circulant matrices $c_j=\mathcal F_{m,\theta}(b_j)$ are also positive. 
Thus, 
\[
a_\ell=\sum_{j=1}^k \lambda_j^\ell c_j, \mbox{ for every }\ell,
\]
which is equivalent to 
\[
x=\sum_{j=1}^k T_n(\lambda_j)\otimes c_j. 
\]
Hence, $x$ is an element of $(C(S^1)^{(n)})_+\otimes_{\rm sep} \left(C^{m,\theta}\right)_+$.

Now suppose $x$ is an element of $(C(S^1)^{(n)})_+\otimes_{{\rm sep}^*} \left(C^{m,\theta}\right)_+$.
Because of the operator system equivalences $C^{m,\theta}\simeq \cstar(\mathbb Z_m) 
\simeq\mathbb C^m$, the cone $\left(C^{m,\theta}\right)_+$ is affinely isomorphic to $(\mathbb R^m)_+$
and elements of $C(S^1)^{(n)}\otimes C^{m,\theta}$ arise as elements of  $C(S^1)^{(n)}\otimes \mathbb C^m$. However, from
$C(S^1)^{(n)}\omin \mathbb C^m\simeq \displaystyle\bigoplus_1^m C(S^1)^{(n)}$, if $\varphi$ and $\psi$ are
positive linear functionals on $C(S^1)^{(n)}$ and $\mathbb C^m$ respectively, then
$\varphi\otimes\psi$ applied to $x\in C(S^1)^{(n)}\omin \mathbb C^m$ yields
\[
\varphi\otimes\psi(x)=\sum_{j=1}^m \alpha_j\varphi(x_j),
\]
where $\alpha_j=\psi(e_j)$ and $x_j$ is the $j$-th direct summand of $x$. By choosing $j$ and then choosing $\psi$ so that
$\alpha_k=0$ for all $k\not=j$, one gets $\varphi(x_j)\geq 0$ for all positive linear functionals $\varphi$ on $C(S^1)^{(n)}$,
whence $x_j\in \left( C(S^1)^{(n)}\right)_+$. Hence, $\bigoplus_1^m x_j$ is a positive element of 
\[
\displaystyle\bigoplus_1^m C(S^1)^{(n)}\simeq C(S^1)^{(n)}\omin \mathbb C^m,
\]
which proves $(C(S^1)^{(n)})_+\otimes_{{\rm sep}^*} \left(C^{m,\theta}\right)_+\subseteq
\left(C(S^1)^{(n)}\omin C^{m,\theta} \right)_+$.
\end{proof}

%%%%%%%%%%%%%%%%%%%%%%%%%%%%%%%%%%
\subsection{The matrix $R_n$ is separable}

\begin{proposition}\label{R_n is separable} 
$R_n \in C(S^1)^{(n)}_+ \otimes_{\rm sep} C(S^1)^{(n)}_+$.
\end{proposition}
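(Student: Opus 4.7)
The plan is to give an explicit separable decomposition of $R_n$ via a discrete Fourier identity on the shift matrices $r_\ell$. The key facts I will use are (i) the formula $T_n(z)=\sum_{\ell=-n+1}^{n-1}z^\ell r_\ell$ for $z\in S^1$, and (ii) Proposition \ref{cvS}, which guarantees that $T_n(\lambda)$ is a positive Toeplitz matrix for every $\lambda\in S^1$ (in fact $n$ times a rank-one projection). Together these imply that for any $\lambda,\mu\in S^1$ the tensor $T_n(\lambda)\otimes T_n(\mu)$ is a concrete separable positive element of $C(S^1)^{(n)}\otimes C(S^1)^{(n)}$.

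Fix an integer $N\geq 2n-1$ and let $\omega=e^{2\pi i/N}$. Since $\omega^{-k}=\overline{\omega^k}\in S^1$, each summand of
\[
S=\frac{1}{N}\sum_{k=0}^{N-1} T_n(\omega^k)\otimes T_n(\omega^{-k})
\]
is a separable positive element. Expanding the product using $T_n(z)=\sum z^\ell r_\ell$ gives
\[
S=\sum_{\ell,m=-n+1}^{n-1}\left(\frac{1}{N}\sum_{k=0}^{N-1}\omega^{k(\ell-m)}\right) r_\ell\otimes r_m.
\]
The inner sum is the standard $N$-th root-of-unity average: it equals $1$ when $N\mid (\ell-m)$ and $0$ otherwise. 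Because $|\ell-m|\leq 2n-2<N$, the divisibility condition forces $\ell=m$, so that $S=\sum_\ell r_\ell\otimes r_\ell=R_n$.

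Hence $R_n$ is expressed as a positive combination of $N$ separable elements, witnessing that $R_n\in C(S^1)^{(n)}_+\otimes_{\rm sep}C(S^1)^{(n)}_+$. There is no real obstacle here beyond the Fourier orthogonality identity; the only mild care is in choosing $N$ large enough ($N\geq 2n-1$) to avoid collisions in the Fourier degrees $\ell-m\in\{-2n+2,\dots,2n-2\}$. I find this approach preferable to invoking the Gurvits Separation Theorem and projecting a second factor via $\mathcal E_n$, since $\mathcal E_n$ is only positive rather than completely positive (Proposition \ref{idempotent}) and would not preserve separability directly; the Fourier averaging, by contrast, keeps both tensor factors in $C(S^1)^{(n)}_+$ throughout.
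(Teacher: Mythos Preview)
Your proof is correct and follows essentially the same Fourier-averaging approach as the paper, expressing $R_n$ as $\frac{1}{N}\sum_k T_n(\omega^k)\otimes T_n(\omega^{-k})$ over $N$-th roots of unity and using orthogonality to collapse the double sum to the diagonal. Your version is in fact slightly cleaner: the paper takes $N$ to be a prime $q>2n$ (and argues via primitivity of $\lambda^\ell$), whereas you correctly observe that any $N\geq 2n-1$ suffices for the orthogonality identity---a point the paper itself concedes immediately after its proof.
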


\begin{proof} 
Let $q$ be a prime number such that $q>2n$, and let $\lambda\in S^1$ be a primitive $q$-th root of unity.
If $\ell\in\{-n+1,\dots,n-1\}$ is such that $\ell\not=0$, then $\lambda^\ell$ is also a primitive $q$-th root of unity. Hence
$\displaystyle\sum_{j=1}^q (\lambda^\ell)^j$ is a sum of all the $q$-th roots of unity, and so
$\displaystyle\sum_{j=1}^q (\lambda^\ell)^j=0$. 
Furthermore, assuming $\ell\in\{-n+1,\dots,n-1\}$ and $k\in\{1,\dots,n-1\}$, the
condition $q>2n$ also implies that $\displaystyle\sum_{j=1}^q (\lambda^{k+\ell})^j=0$, if $\ell\not=-k$, and that
$\displaystyle\sum_{j=1}^q (\lambda^{k-\ell})^j=0$, if $\ell\not=k$,

Summing over the primitive roots of unity leads to
\[
\sum_{j=1}^q T_n(\lambda^j)=\sum_{j=1}^q\sum_{\ell=-n+1}^{n-1} (\lambda^j)^\ell r_\ell
=\sum_{\ell=-n+1}^{n-1}\left(\sum_{j=1}^q (\lambda^\ell)^j\right)r_\ell = qr_0.
\]
Next, fix $j\in\{1,\dots,q\}$ and consider $T_n(\lambda^j)\otimes T_n(\lambda^{-j})$, which is the following Toeplitz matrix of Toeplitz matrices:
\[
T_n(\lambda^j)\otimes T_n(\lambda^{-j})=
\left[\begin{array}{cccc}
T_n(\lambda^{-j}) & \lambda^{-j}T_n(\lambda^{-j}) & \dots & \lambda^{-nj+j}T_n(\lambda^{-j}) \\
\lambda^j T_n(\lambda^{-j}) & T_n(\lambda^{-j}) & \ddots & \vdots \\
\vdots & \ddots & \ddots &\lambda^{-j}T_n(\lambda^{-j}) \\
\lambda^{nj-j}T_n(\lambda^{-j}) & \dots & \lambda^j T_n(\lambda^{-j}) & T_n(\lambda^{-j}) 
\end{array}\right].
\]
Note that, for $k=1,\dots,n-1$, 
\[
\lambda^{kj}T_n(\lambda^{-j})=\sum_{\ell=-n+1}^{n-1} \lambda^{kj}(\lambda^{-j})^\ell r_\ell =
\sum_{\ell=-n+1}^{n-1} (\lambda^{k-\ell})^j r_\ell, 
\]
and so
\[
\sum_{j=1}^q\left(\lambda^{kj}T_n(\lambda^{-j})\right)=\sum_{\ell=-n+1}^{n-1} \left(\sum_{j=1}^q(\lambda^{k-\ell})^j\right) r_\ell=
qr_k.
\] 
Similarly, for $k=1,\dots,n-1$,
\[
\sum_{j=1}^q\left(\lambda^{-kj}T_n(\lambda^{-j})\right)=\sum_{\ell=-n+1}^{n-1} \left(\sum_{j=1}^q(\lambda^{k+\ell})^j\right) r_\ell=
qr_{-k}.
\] 
Hence,
\[
\sum_{j=1}^q\frac{1}{q}\left( T_n(\lambda^j)\otimes T_n(\lambda^{-j})\right)
=
\left[\begin{array}{cccc}
r_0& r_{-1} & \dots & r_{-n+1} \\
r_1 & r_0 & \ddots & \vdots \\
\vdots & \ddots & \ddots & r_{-1} \\
r_{n-1} & \dots & r_1 & r_0
\end{array}\right]=R_n,
\]
which proves that $R_n$ is separable.
\end{proof}

The choice of prime $q>2n$ in the proof of Proposition \ref{R_n is separable}, 
while sufficient for the proof of separability, is not claimed to be optimal.
For example, with $n=2$, one can calculate
\[
R_2=\frac{1}{2}\left( T_2(i)\otimes T_2(-i) \,+\, T_2(-i)\otimes T_2(i)\right).
\]

%%%%%%%%%%%%%%%%%%%%%%
\subsection{The tensor cone $\left( C(S^1)^{(n)}\omax C(S^1)^{(m)}\right)_+$}

\begin{theorem}\label{max is sep} For every $n,m\geq 2$,
\[
C(S^1)^{(n)}_+ \otimes_{\rm sep} C(S^1)^{(n)}_+
=
\left( C(S^1)^{(n)}\omax C(S^1)^{(m)}\right)_+.
\]
\end{theorem}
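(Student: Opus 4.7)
The plan is to combine Proposition \ref{tc} with a bipolar-theorem argument, transferring the problem through operator-system duality to a statement about $(C(S^1)_{(n)}\omin C(S^1)_{(m)})_+$ that can be verified directly using $2$-torus evaluation functionals. (I read the left-hand side as $(C(S^1)^{(n)})_+ \otimes_{\rm sep} (C(S^1)^{(m)})_+$, in accordance with the statement in the Main Results subsection.)

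First I would invoke Proposition \ref{tc} to get the easy inclusion
\[
(C(S^1)^{(n)})_+ \otimes_{\rm sep} (C(S^1)^{(m)})_+ \subseteq \left(C(S^1)^{(n)}\omax C(S^1)^{(m)}\right)_+.
\]
Both sides are proper cones in the finite-dimensional real vector space $(C(S^1)^{(n)}\otimes_{\mathbb C} C(S^1)^{(m)})_{\rm sa}$, so by the bipolar theorem the reverse inclusion is equivalent to equality of their dual cones. Using Theorem \ref{toeplitz duality} to identify $(C(S^1)^{(n)})^d \simeq C(S^1)_{(n)}$ (and likewise for $m$), together with Notation \ref{notation cmplx} and the definition of $\otimes_{{\rm sep}^*}$, the dual of the separable cone becomes $(C(S^1)_{(n)})_+ \otimes_{{\rm sep}^*} (C(S^1)_{(m)})_+$. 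On the other side, Theorem \ref{tensor duality} applied to $C(S^1)_{(n)}$ and $C(S^1)_{(m)}$, combined with biduality of finite-dimensional operator systems, gives
\[
\left(C(S^1)^{(n)}\omax C(S^1)^{(m)}\right)^d \simeq C(S^1)_{(n)}\omin C(S^1)_{(m)},
\]
so the dual of the $\omax$ positive cone is $(C(S^1)_{(n)}\omin C(S^1)_{(m)})_+$.

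The proof thus reduces to establishing
\[
(C(S^1)_{(n)})_+ \otimes_{{\rm sep}^*} (C(S^1)_{(m)})_+ = \left(C(S^1)_{(n)}\omin C(S^1)_{(m)}\right)_+.
\]
Since $C(S^1)_{(n)}\omin C(S^1)_{(m)}$ sits as an operator subsystem of $C(S^1\times S^1)$, an element $F$ belongs to the right-hand cone precisely when $F(\lambda,\mu)\ge 0$ for every $(\lambda,\mu)\in S^1\times S^1$. For the left-hand cone, Proposition \ref{cvS} describes the extremal rays of $(C(S^1)^{(n)})_+$ as the matrices $T_n(\lambda)$, $\lambda\in S^1$; under the isomorphism (\ref{lf defn}) of Theorem \ref{toeplitz duality}, these correspond exactly to the point-evaluation functionals $f\mapsto f(\lambda^{-1})$ on $C(S^1)_{(n)}$. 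By finite-dimensional Carath\'eodory, every positive linear functional on $C(S^1)_{(n)}$ is therefore a nonnegative combination of evaluations at points of $S^1$; hence the condition $(\varphi\otimes\psi)(F)\ge 0$ for all positive $\varphi,\psi$ collapses to $F(\lambda,\mu)\ge 0$ pointwise on the $2$-torus, matching the min cone. A final invocation of the bipolar theorem closes the argument.

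The main obstacle I expect is the real versus complex bookkeeping: Theorem \ref{tensor duality} is formulated for operator systems with their complex $*$-structure, while the bipolar argument lives on selfadjoint real parts in the sense of Notation \ref{notation cmplx}. On the Toeplitz side, Corollary \ref{saT} supplies $(C(S^1)^{(n)})_{\rm sa}\otimes_{\mathbb R}\ost_{\rm sa} = (C(S^1)^{(n)}\otimes_{\mathbb C}\ost)_{\rm sa}$, and the parallel identity on the Fej\'er-Riesz side is straightforward because $\{\chi_0,\,\chi_\ell+\chi_{-\ell},\,i(\chi_\ell-\chi_{-\ell}):1\le\ell\le n-1\}$ is a selfadjoint real basis for $C(S^1)_{(n)}$. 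Once this compatibility is verified, the duality computations and the bipolar step are routine, and the theorem follows.
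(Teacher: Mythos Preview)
Your argument is correct and takes a genuinely different route from the paper's. The paper proves the theorem directly: it first observes that the separability cone is closed (its base is the convex hull of the compact set $\{T_n(\lambda)\otimes T_m(\mu):\lambda,\mu\in S^1\}$, hence compact by Carath\'eodory), then invokes \cite[Proposition 3.4]{farenick--mcburney2023} to show every strictly positive element $x+\varepsilon(1_n\otimes 1_m)$ of the $\omax$ cone is separable, and finally lets $\varepsilon\to 0$.

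Your approach instead dualises the problem: via the bipolar theorem, Toeplitz duality (Theorem \ref{toeplitz duality}), and tensor duality (Theorem \ref{tensor duality}), you reduce the statement to
\[
(C(S^1)_{(n)})_+\otimes_{{\rm sep}^*}(C(S^1)_{(m)})_+ = \bigl(C(S^1)_{(n)}\omin C(S^1)_{(m)}\bigr)_+,
\]
which you then verify by identifying both sides with pointwise nonnegativity on the $2$-torus. This is precisely the first assertion of Proposition \ref{very max}, which the paper proves independently later in the Entanglement section. So you have in effect shown that Theorem \ref{max is sep} and the first half of Proposition \ref{very max} are dual to one another, a structural relationship the paper does not make explicit. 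Your proof is also more self-contained, avoiding the external reference to \cite{farenick--mcburney2023}; the trade-off is that the paper's argument is shorter once that black box is granted, and it does not need to track the real-versus-complex compatibility you (rightly) flag as the main bookkeeping burden.
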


\begin{proof} We first show that $C(S^1)^{(n)}_+ \otimes_{\rm sep} C(S^1)^{(n)}_+$ is topologically closed.
Consider the compact set $\mathcal E= \left\{  T_n(\lambda)\otimes T_m(\mu))\,|\,\lambda,\mu\in S^1 \right\}$.
Because the extremal rays of $C(S^1)^{(n)}_+ \otimes_{\rm sep} C(S^1)^{(n)}_+$ have the form
\[
\left\{ \alpha( T_n(\lambda)\otimes T_m(\mu))\,|\,\alpha\geq0,\,\lambda,\mu\in S^1 \right\},
\]
the set $\mathcal E$ constitutes the set of extreme points of the convex hull $\mathcal C$ of $\mathcal E$.
By Carath\'eodory's Theorem, the convex hull of a compact set in a finite-dimensional vector
space is compact; thus,  
$\mathcal C$ is compact. But $\mathcal C$ is also the base for the cone
$C(S^1)^{(n)}_+ \otimes_{\rm sep} C(S^1)^{(n)}_+$, and so $C(S^1)^{(n)}_+ \otimes_{\rm sep} C(S^1)^{(n)}_+$
is closed.

Select $x\in \left( C(S^1)^{(n)}\omax C(S^1)^{(m)}\right)_+$. By definition, for each
$\varepsilon>0$, the matrix $x+\varepsilon(1_n\otimes 1_m)$ has the form 
$\gamma^*(p\otimes q)\gamma$, for some positive matrices $p$ and $q$ with entries from
$C(S^1)^{(n)}$ and $C(S^1)^{(m)}$, respectively, and some rectangular complex matrix $\gamma$.
Because  $x+\varepsilon(1_n\otimes 1_m)$ is strictly positive in $C(S^1)^{(n)}\omax C(S^1)^{(m)}$,
there are,
by \cite[Proposition 3.4]{farenick--mcburney2023},
$k_\varepsilon\in\mathbb N$, $a_j^{(\varepsilon)}\in C(S^1)^{(n)}_+$, and $b_j^{(\varepsilon)}\in C(S^1)^{(m)}_+$
such that
and so, by \cite[Proposition 3.4]{farenick--mcburney2023} it may be expressed as
\[
x+\varepsilon(1_n\otimes 1_m)= \sum_{j=1}^{k_\varepsilon} a_j^{(\varepsilon)} \otimes b_j^{(\varepsilon)}.
\]
Thus, $x+\varepsilon(1_n\otimes 1_m)$ is separable, for every $\varepsilon>0$. Because the separability cone is
closed, we deduce that $x$ itself is separable.
\end{proof}

%%%%%%%%%%%%%%%%%%%%%%%%%%%%%%
\section{Entanglement}

If $\osr$ and $\ost$ are finite-dimensional operator systems, then
\[
\osr_+\otimes_{{\rm sep}^*} \ost_+=\left\{x\in(\osr\otimes\ost)_{\rm sa}\,|\,(\varphi\otimes\psi)[x]\geq 0, \mbox{ for all }\varphi\in\osr^d_+,\,\psi\in\ost^d_+\right\}.
\]
By the convexity of $\ost^d_+$ and $\osr^d_+$, one may restrict themselves to those $\varphi$ and $\psi$ that generate the extremal rays of these cones.
Fortunately, these extremal rays have been determined by Connes and van Suijlekom for both Toeplitz and Fej\'er-Riesz operator systems
\cite[Propositions 4.5 and 4.8]{connes-vansuijlekom2021}.

 \begin{proposition}[Extremal Rays of the Dual]\label{ext rays dual}
 For every $n\geq 2$: 
 \begin{enumerate}
 \item the extremal positive linear functionals on $C(S^1)^{(n)}$ are those of the form $x\mapsto \langle x\xi,\xi\rangle$, where 
 $\xi=\left[ \begin{array}{c} \xi_0 \\ \xi_1 \\ \vdots \\ \xi_{n-1}\end{array}\right]\in\mathbb C^n$ is a vector of coefficients in which the polynomial
 \[
 f_\xi(z)=\sum_{\ell=0}^{n-1} \xi_\ell z^{n-1-\ell}
 \]
 has all of its roots on the unit circle $S^1$;
 \item the extremal positive linear functionals on $C(S^1)_{(n)}$ are positive scalar multiples of the point-evaluations
 $f\mapsto f(\lambda)$, for $\lambda\in S^1$.
 \end{enumerate}
 \end{proposition}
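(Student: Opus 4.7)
The plan is to derive both statements from the duality isomorphism of Theorem \ref{toeplitz duality}, together with the description of extremal rays in the base positive cones provided by Proposition \ref{cvS}. Because a unital complete order isomorphism restricts to a bijection of the base positive cones that preserves extremal rays, the extremal positive linear functionals on $C(S^1)^{(n)}$ correspond bijectively (via $(C(S^1)^{(n)})^d \simeq C(S^1)_{(n)}$) to the extremal elements of $(C(S^1)_{(n)})_+$, and symmetrically in the reverse direction.

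Statement (2) follows quickly. By Proposition \ref{cvS} the extremal rays of $(C(S^1)^{(n)})_+$ are generated by the rank-one Toeplitz matrices $T_n(\lambda)$ for $\lambda\in S^1$. Since $T_n(\lambda)$ has diagonal entries $\tau_k = \lambda^k$, the pairing $\varphi_T(f) = \sum_k \tau_{-k}\hat f(k)$ from Theorem \ref{toeplitz duality} simplifies to
\[
\varphi_{T_n(\lambda)}(f) = \sum_{k=-n+1}^{n-1}\lambda^{-k}\hat f(k) = f(\lambda^{-1}).
\]
As $\lambda\mapsto\lambda^{-1}$ is a bijection of $S^1$, these are exactly the point evaluations at points of $S^1$.

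Statement (1) is the harder half and calls for the Fej\'er--Riesz factorisation theorem. By Proposition \ref{cvS}, an extremal $g\in (C(S^1)_{(n)})_+$ is a nonnegative Laurent polynomial whose finite roots all lie on $S^1$. Fej\'er--Riesz produces a polynomial $q$ of degree at most $n-1$ with $g(z)=|q(z)|^2$ on $S^1$, and the extremality hypothesis allows one to choose $q$ with all roots on $S^1$, since a root of $q$ off $S^1$ would, together with its reflection $w\mapsto 1/\bar w$, force $g$ to have a root off $S^1$. After possibly multiplying $q$ by a power of $z$ (which does not affect $|q|^2$ on $S^1$), encode its coefficients in a vector $\xi\in\mathbb C^n$ in such a way that $f_\xi(z)=\sum_\ell \xi_\ell z^{n-1-\ell}$ is the reciprocal polynomial of $q$; since $w\in S^1 \iff 1/w\in S^1$, all roots of $f_\xi$ then lie on $S^1$. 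A direct Fourier-coefficient computation yields $\hat g(k)=\sum_\ell \xi_\ell\overline{\xi_{\ell+k}}$, and substituting this into the dual pairing $\sum_k\tau_{-k}\hat g(k)$ and reindexing identifies the corresponding positive functional on $C(S^1)^{(n)}$ with the quadratic form $x\mapsto\langle x\xi,\xi\rangle$.

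The main obstacle is the Fej\'er--Riesz step together with the bookkeeping needed to match the natural factor $q$ appearing in the factorisation with the polynomial $f_\xi$ used in the statement, which requires a combination of reciprocation and a power-of-$z$ normalisation; extremality of $g$ is precisely what forces the distinguished factor to have all its roots on $S^1$, and this condition transfers under reciprocation to the root condition imposed on $f_\xi$.
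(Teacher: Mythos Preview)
Your approach is correct, and in fact the paper supplies no proof of this proposition at all: like Proposition~\ref{cvS}, it is simply cited to Connes and van~Suijlekom \cite[Propositions~4.5 and~4.8]{connes-vansuijlekom2021}. Your derivation of the dual statement from the primal one via the explicit isomorphism of Theorem~\ref{toeplitz duality} is therefore more than what the paper offers. The computation for~(2) is clean. For~(1) the Fej\'er--Riesz strategy is also right, though the intermediate formula for the Fourier coefficients should read $\hat g(k)=\sum_\ell \xi_{\ell+k}\overline{\xi_\ell}$ (your version is its complex conjugate, which is harmless once substituted into the pairing $\sum_k\tau_{-k}\hat g(k)$). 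The ``power of $z$'' normalisation you invoke is essential and corresponds to shifting the coefficient vector $\xi$ inside $\mathbb C^n$; this is needed precisely to avoid a spurious root of $f_\xi$ at $0$ when $q$ has degree strictly less than $n-1$, and it is permitted because the shift leaves the quadratic form $x\mapsto\langle x\xi,\xi\rangle$ unchanged on Toeplitz $x$. With that caveat, both directions of~(1) go through: extremal $g$ forces all roots of $q$ onto $S^1$, and conversely any $\xi$ with $f_\xi$ having roots only on $S^1$ yields a $g=|q|^2$ whose nonzero roots are confined to $S^1$, hence extremal by Proposition~\ref{cvS}.
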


 %%%%%%%%%%%%%%%%%%%%%%%%%%
 \subsection{Entangled block Toeplitz matrices}
 
If $\xi\in\mathbb C^2$ is such that $f_\xi(\lambda)=0$ only for $\lambda\in S^1$, then scaling $\xi$ by an appropriate positive scalar multiple allows us to 
assume $\xi$ is an element of $S^1\times S^1\subset\mathbb C^2$. Thus, these are the vector states that generate the extremal
 rays of the cone of positive linear functionals on $C(S^1)^{(2)}$.

 \begin{lemma}\label{snow} If $a=\left[\begin{array}{cc} \alpha & \overline\beta \\ \beta & \alpha\end{array}\right]\in C(S^1)^{(2)}$ satisfies
 $\langle a\xi,\xi\rangle\geq 0$ for every $\xi\in S^1\times S^1$,
 then $\langle a\gamma,\gamma\rangle\geq 0$ for every $\gamma\in \mathbb C^2$.
 \end{lemma}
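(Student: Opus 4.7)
The plan is to reduce the hypothesis to a single scalar inequality on the entries of $a$, and then deduce positivity on arbitrary vectors from that inequality via the arithmetic--geometric mean estimate (much in the spirit of the proof of Proposition \ref{idempotent}).

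First I would compute $\langle a\xi,\xi\rangle$ for a generic $\xi=(\xi_0,\xi_1)^{T}\in S^1\times S^1$. A direct expansion gives
\[
\langle a\xi,\xi\rangle=\alpha|\xi_0|^2+\alpha|\xi_1|^2+\bar\beta\xi_1\bar\xi_0+\beta\xi_0\bar\xi_1=2\alpha+2\Re(\beta\xi_0\bar\xi_1).
\]
Since this quantity is assumed nonnegative (hence real) for every choice of $\xi_0,\xi_1\in S^1$, the scalar $\alpha$ must be real. Moreover, as $\xi_0,\xi_1$ range over $S^1$, the product $\xi_0\bar\xi_1$ traces out all of $S^1$, so $\Re(\beta\xi_0\bar\xi_1)$ attains the value $-|\beta|$. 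The hypothesis therefore collapses to the single inequality $\alpha\geq|\beta|$.

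Next I would verify that $\alpha\geq|\beta|$ already forces $a$ to be positive on all of $\mathbb C^2$. For an arbitrary $\gamma=(\gamma_0,\gamma_1)^T\in\mathbb C^2$, the same expansion yields
\[
\langle a\gamma,\gamma\rangle=\alpha\bigl(|\gamma_0|^2+|\gamma_1|^2\bigr)+2\Re(\beta\gamma_0\bar\gamma_1).
\]
Bounding the cross term by $2\Re(\beta\gamma_0\bar\gamma_1)\geq-2|\beta|\,|\gamma_0|\,|\gamma_1|$ and invoking the arithmetic--geometric mean inequality $2|\gamma_0|\,|\gamma_1|\leq|\gamma_0|^2+|\gamma_1|^2$, one obtains
\[
\langle a\gamma,\gamma\rangle\geq(\alpha-|\beta|)\bigl(|\gamma_0|^2+|\gamma_1|^2\bigr)\geq0,
\]
which is the desired conclusion.

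There is no real obstacle here; the argument is a two-line computation once one recognises that on unit-modulus vectors the off-diagonal entries of a $2\times 2$ Toeplitz matrix contribute only their modulus. The only point worth flagging is the preliminary observation that $\alpha$ must be real, which is automatic from the hypothesis and is needed for the final AM--GM step to make sense.
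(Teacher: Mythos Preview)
Your proof is correct and follows essentially the same line as the paper's: both reduce the hypothesis on $S^1\times S^1$ to the scalar inequality $\alpha\geq|\beta|$ and then conclude that $a$ is a positive operator. The only difference is that you spell out the final implication via the AM--GM inequality, whereas the paper simply asserts that $\alpha\geq|\beta|$ forces positivity (the eigenvalues of $a$ being $\alpha\pm|\beta|$).
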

 
 \begin{proof} In writing $\xi=\lambda e_1+\mu e_2$, for $\lambda,\mu\in S^1$, we obtain
 \[
 0\leq \langle a\xi,\xi\rangle = 2\alpha + 2\Re( \lambda\overline\mu\beta).
 \]
 Thus, $\langle a\xi,\xi\rangle\geq 0$ for every $\xi\in S^1\times S^1$ if and only if $\alpha\geq-\Re(e^{i\theta}\beta)$, for every $\theta\in\mathbb R$.
 In selecting $\theta$ so that $-\Re(e^{i\theta}\beta)=|\beta|$, one obtains $\alpha\geq|\beta|$, which implies $a$ is a positive operator
 on $\mathbb C^2$.
 \end{proof}
 
 \begin{proposition}\label{d-sep2x2} The following statements are equivalent for $x=\left[\begin{array}{cc} a& b^* \\ b & a\end{array}\right]$
 in $C(S^1)^{(2)}\otimes C(S^1)^{(2)}$:
 \begin{enumerate}
 \item $x\in  (C(S^1)^{(2)})_+\otimes_{{\rm sep}^*}  C(S^1)^{(2)})_+$;
 \item $\frac{1}{2}(e^{i\theta} b + e^{-i\theta}b^*)\leq a$, for all $\theta\in \mathbb R$.
 \end{enumerate}
 \end{proposition}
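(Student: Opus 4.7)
The plan is to unfold the definition of $(C(S^1)^{(2)})_+\otimes_{{\rm sep}^*}(C(S^1)^{(2)})_+$ using the description of the extremal positive functionals on $C(S^1)^{(2)}$ from Proposition \ref{ext rays dual}. For $n=2$, the polynomial condition $f_\xi(z)=\xi_0 z+\xi_1$ having its root in $S^1$ amounts to $|\xi_0|=|\xi_1|$, so after a positive rescaling every extremal positive functional on $C(S^1)^{(2)}$ is a vector state $\varphi_{\lambda,\mu}(c)=\langle c(\lambda,\mu)^T,(\lambda,\mu)^T\rangle$ with $\lambda,\mu\in S^1$. A direct computation will show that $\varphi_{\lambda,\mu}(c)$ depends only on $w=\lambda\overline\mu\in S^1$ (up to a factor of $2$), and I will write $\varphi_w$ for the normalised version; in particular $\varphi_w(b^*)=\overline{\varphi_w(b)}$.

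Next, I would write $x=r_0\otimes a + r_1\otimes b + r_{-1}\otimes b^*$ and compute $(\varphi_u\otimes\varphi_w)[x]$. Using $\varphi_u(r_0)=1$ and $\varphi_u(r_{\pm 1})=\tfrac12 u^{\pm 1}$, this collapses to
\[
(\varphi_u\otimes\varphi_w)[x]\;=\;\varphi_w(a)+\Re\bigl(u\,\varphi_w(b)\bigr).
\]
By convexity, (1) is equivalent to this expression being nonnegative for every $u,w\in S^1$; minimising over $u$, (1) is equivalent to the scalar condition $\varphi_w(a)\ge |\varphi_w(b)|$ for every $w\in S^1$.

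The implication (2) $\Rightarrow$ (1) is then immediate: apply the positive linear functional $\varphi_w$ to the inequality in (2) to obtain $\varphi_w(a)\ge \Re(e^{i\theta}\varphi_w(b))$ for all $\theta$, and maximise over $\theta$. For (1) $\Rightarrow$ (2), fix $\theta$ and set $m_\theta = a - \tfrac12(e^{i\theta}b+e^{-i\theta}b^*)$, which is a selfadjoint element of $C(S^1)^{(2)}$. For $\xi=(\lambda,\mu)\in S^1\times S^1$ with $w=\lambda\overline\mu$, one has $\langle m_\theta\xi,\xi\rangle = 2\bigl(\varphi_w(a)-\Re(e^{i\theta}\varphi_w(b))\bigr)\ge 2\bigl(\varphi_w(a)-|\varphi_w(b)|\bigr)\ge 0$ by hypothesis. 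Now invoke Lemma \ref{snow}: for selfadjoint Toeplitz matrices in $C(S^1)^{(2)}$, nonnegativity on vectors from $S^1\times S^1$ forces nonnegativity on all of $\mathbb C^2$, so $m_\theta\ge 0$.

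The routine arithmetic of evaluating $\varphi_w$ on each basis vector is the one calculation that must be done carefully, but no real obstacle arises: the self-adjointness of $\varphi_w$ (giving $\varphi_w(b^*)=\overline{\varphi_w(b)}$) is what makes the expression $(\varphi_u\otimes\varphi_w)[x]$ collapse to a real part, and Lemma \ref{snow} is the precise tool that allows the reduction in the (1) $\Rightarrow$ (2) direction. The mild care point is simply the bookkeeping that identifies the two-parameter family of extremal positive functionals on $C(S^1)^{(2)}$ with a single circle parameter $w\in S^1$, after which both directions of the equivalence are a one-line computation.
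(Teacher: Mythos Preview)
Your proof is correct and follows essentially the same route as the paper's: both restrict to the extremal positive functionals on $C(S^1)^{(2)}$ (vector states coming from $\xi\in S^1\times S^1$), reduce condition (1) to the scalar inequality $\langle(a+\Re(e^{i\theta}b))\eta,\eta\rangle\ge 0$ for all $\eta\in S^1\times S^1$ and all $\theta$, and then invoke Lemma~\ref{snow} to upgrade this to positivity of $a+\Re(e^{i\theta}b)$ on all of $\mathbb C^2$. Your only cosmetic difference is packaging the vector states as normalised functionals $\varphi_w$ indexed by $w=\lambda\overline\mu$, but the computation and the use of Lemma~\ref{snow} are identical to the paper's.
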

 
 \begin{proof} 
 In fixing $\eta\in S^1\times S^1$ and taking 
 $\xi=\lambda e_1 + \mu e_2$, for $\lambda,\mu\in S^1$,
 we obtain
 \[
 \langle x(\xi\otimes\eta), (\xi\otimes\eta)\rangle = 2\left(\langle a\eta,\eta\rangle + \Re(\lambda\overline\mu\langle b\eta,\eta\rangle\right).
 \]
 Hence, $x\in  (C(S^1)^{(2)})_+\otimes_{{\rm sep}^*}  C(S^1)^{(2)})_+$ if and only if, for each $\theta\in\mathbb R$,
 \[
 \left\langle \left(a + \Re(e^{i\theta}b)\right)\eta,\eta\right\rangle \geq 0,  \,\mbox{ for all }\,\eta\in S^1\times S^1.
 \]
 However, by Lemma \ref{snow}, the condition above holds if and only if $ a + \Re(e^{i\theta}b) $ is a positive operator on $\mathbb C^2$, for every
 $\theta\in\mathbb R$.
 \end{proof}
 
 Note $(C(S^1)^{(2)})_+\otimes_{{\rm sep}^*}  C(S^1)^{(2)})_+$ properly contains
 $\left( C(S^1)^{(2)}\omin C(S^1)^{(2)}\right)_+$  because, if they were equal, then 
 Proposition \ref{2-sep} would imply every element of the cone
 $(C(S^1)^{(2)})_+\otimes_{{\rm sep}^*}  C(S^1)^{(2)})_+$
 is separable, contrary to the conclusion of Corollary \ref{nope}

 %%%%%%%%%%%%%%%%%%%%%%
 \subsection{Entanglement in tensor products of Fej\'er-Riesz cones}
 
 \begin{proposition}\label{very max} For all $n,m\geq 2$, 
 \[
 \left( C(S^1)_{(n)}\omin C(S^1)_{(m)}\right)_+ = \left( C(S^1)_{(n)}\right)_+ \otimes_{{\rm sep}^*} \left( C(S^1)_{(m)}\right)_+
 \]
 and 
 \[
 \left( C(S^1)_{(2)}\omax C(S^1)_{(m)}\right)_+ = \left( C(S^1)_{(2)}\right)_+ \otimes_{{\rm sep}^*} \left( C(S^1)_{(m)}\right)_+.
 \]
 \end{proposition}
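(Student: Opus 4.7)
My plan is to derive both identities by passing to duals and invoking three results established earlier in the paper: Theorem~\ref{tensor duality} (Tensor Duality), Theorem~\ref{toeplitz duality} (Toeplitz/Fej\'er-Riesz duality), together with the max/min separability identifications Theorem~\ref{max is sep} and Proposition~\ref{2-sep}. Once these are in hand, the proof becomes entirely formal and proceeds by the same dualisation manoeuvre in both halves.

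For the first equality, I would begin by combining Theorem~\ref{tensor duality} with the identification $C(S^1)_{(k)}^d\simeq C(S^1)^{(k)}$ from Theorem~\ref{toeplitz duality} to obtain the unital complete order isomorphism
\[
\bigl(C(S^1)_{(n)}\omin C(S^1)_{(m)}\bigr)^d \simeq C(S^1)^{(n)}\omax C(S^1)^{(m)}.
\]
Theorem~\ref{max is sep} identifies the base cone of the right-hand side as the separability cone $C(S^1)^{(n)}_+\otimes_{\rm sep} C(S^1)^{(m)}_+$. Because the base cone of the dual operator system is precisely the polar of the base cone, the bipolar theorem yields
\[
\bigl(C(S^1)_{(n)}\omin C(S^1)_{(m)}\bigr)_+ = \bigl(C(S^1)^{(n)}_+\otimes_{\rm sep} C(S^1)^{(m)}_+\bigr)^d,
\]
and by the very definition of $\otimes_{{\rm sep}^*}$ (Notation~\ref{notation cmplx}) combined with $\bigl(C(S^1)^{(k)}_+\bigr)^d\cong \bigl(C(S^1)_{(k)}\bigr)_+$, the right-hand side is $\bigl(C(S^1)_{(n)}\bigr)_+\otimes_{{\rm sep}^*}\bigl(C(S^1)_{(m)}\bigr)_+$, as required.

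The second equality runs the same template in reverse. Taking polars of Theorem~\ref{tensor duality} and using the bipolar theorem yields the companion identification $(\osr\omax\ost)^d\simeq \osr^d\omin\ost^d$ for finite-dimensional operator systems; applying this to $\osr=C(S^1)_{(2)}$ and $\ost=C(S^1)_{(m)}$ gives
\[
\bigl(C(S^1)_{(2)}\omax C(S^1)_{(m)}\bigr)^d \simeq C(S^1)^{(2)}\omin C(S^1)^{(m)}.
\]
Proposition~\ref{2-sep} identifies the base cone of the right-hand side as $C(S^1)^{(2)}_+\otimes_{\rm sep} C(S^1)^{(m)}_+$, and the same dualisation as above finishes the proof. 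The main obstacle is not a calculation---the real work is already absorbed into Theorem~\ref{max is sep} (via Carath\'eodory together with the Gurvits Separation Theorem) and Proposition~\ref{2-sep} (via Gurvits together with the positive idempotent $\mathcal E_2$)---but rather the careful bookkeeping on duality: verifying that $(\osr^d)_+$ is canonically identified with the polar $(\osr_+)^d$, and that the symmetric definition of $\otimes_{{\rm sep}^*}$ on complex operator systems is compatible with this identification, as addressed in Notation~\ref{notation cmplx}. The asymmetric restriction to the index~$2$ in the second identity is not an artefact of the duality argument; it is forced by the corresponding restriction in Proposition~\ref{2-sep}, which ultimately reflects the failure of the averaging idempotent $\mathcal E_n$ to be positive for $n\ge 3$ (Proposition~\ref{idempotent}).
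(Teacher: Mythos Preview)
Your argument is correct, and for the second identity it is essentially the same as the paper's: the paper observes that Proposition~\ref{2-sep} makes the identity map $C(S^1)^{(2)}\omin C(S^1)^{(m)}\to C(S^1)^{(2)}\omax C(S^1)^{(m)}$ positive, dualises via Theorem~\ref{tensor duality} and Toeplitz duality to obtain positivity of the identity $C(S^1)_{(2)}\omin C(S^1)_{(m)}\to C(S^1)_{(2)}\omax C(S^1)_{(m)}$, and then invokes the first identity. Your version packages the same dualisation slightly differently, but the content is identical.

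For the first identity, however, your route is genuinely different. The paper gives a direct one-line argument: by Proposition~\ref{ext rays dual} the extremal positive functionals on $C(S^1)_{(k)}$ are point evaluations $\varrho_\lambda$, so $f\in (C(S^1)_{(n)})_+\otimes_{{\rm sep}^*}(C(S^1)_{(m)})_+$ means precisely $f(\lambda,\mu)=(\varrho_\lambda\otimes\varrho_\mu)(f)\geq 0$ for all $\lambda,\mu\in S^1$, which is exactly membership in $(C(S^1)_{(n)}\omin C(S^1)_{(m)})_+$. This avoids Theorem~\ref{max is sep} entirely---and hence the Carath\'eodory/closure argument and the input from \cite{farenick--mcburney2023} that that theorem rests on. Your approach is heavier in its dependencies but has the virtue of exhibiting both identities as exact duals of the two Toeplitz separability results (Theorem~\ref{max is sep} and Proposition~\ref{2-sep}), making the structural symmetry between the Toeplitz and Fej\'er--Riesz sides completely transparent.
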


\begin{proof} If $\lambda,\mu\in S^1$ and if $\varrho_\lambda$ and $\varrho_\mu$ are the corresponding point evaluations, then, for every
$f\in \left( C(S^1)_{(n)}\right)_+ \otimes_{{\rm sep}^*} \left( C(S^1)_{(m)}\right)_+$, 
\[
0 \leq (\varrho_\lambda\otimes\varrho_\mu)[f] = f(\lambda,\mu);
\]
hence, $f\in \left( C(S^1)_{(n)}\omin C(S^1)_{(m)}\right)_+$.

Proposition \ref{2-sep} show that the identity map
\[
\iota: C(S^1)^{(2)}\omin C(S^1)^{(m)} \rightarrow C(S^1)^{(2)}\omax C(S^1)^{(m)}
\]
is a positive linear map of operator systems. Hence, the dual map $\iota^d$ is also positive, 
where
\[
\iota^d: \left(C(S^1)^{(2)}\omax C(S^1)^{(m)}\right)^d \rightarrow \left(C(S^1)^{(2)}\omin C(S^1)^{(m)}\right)^d.
\]
However, in identifying $(\osr\omin\ost)^d$ with $\osr^d\omax\ost^d$, by Theorem \ref{tensor duality}, then Toeplitz duality 
yields the positivity of the map
\[
\iota^d: C(S^1)_{(2)}\omin C(S^1)_{(m)} \rightarrow C(S^1)_{(2)}\omax C(S^1)_{(m)},
\]
which proves $\left( C(S^1)_{(2)}\omax C(S^1)_{(m)}\right)_+ = \left( C(S^1)_{(2)}\right)_+ \otimes_{{\rm sep}^*} \left( C(S^1)_{(m)}\right)_+$.
\end{proof}

The separability cones for Fej\'er-Riesz operator systems are really quite small, since, by definition, 
$f\in\left( C(S^1)_{(n)}\right)_+ \otimes_{{\rm sep}} \left( C(S^1)_{(m)}\right)_+$
if and only if there are $g_j\in \left( C(S^1)_{(n)}\right)_+$ and $h_j\in\left( C(S^1)_{(m)}\right)_+$ such that
\[
f(z_1,z_2)=\sum_{j=1}^k g_j(z_1)h_j(z_2).
\]
More natural, and in the spirit of the single-variable Fej\'er-Riesz factorisation of positive trigonometric polynomials $f(z)$
as $f(z)=|h(z)|^2$ for a suitable polynomial $f\in\mathbb C[z]$, are representations 
of $f\in \left( C(S^1)_{(n)}\omin C(S^1)_{(m)}\right)_+$ as sums of squares, by which is meant
\[
f(z_1,z_2)=\sum_{j=1}^k \overline{h_j(z_1,z_2)}h_j(z_1,z_2) = \sum_{s=1}^k|h_j(z_1,z_2)|^2,
\]
for some polynomials $h_j\in\mathbb C[z_1,z_2]$.

 %%%%%%%%%%%%%%%%%%%%%%
 \subsection{The matrix $T_n$ is entangled}
 
 \begin{proposition}\label{T_n is entangled}
 $T_n$ is entangled in $\left(C(S^1)^{(n)}\omin C(S^1)_{(n)} \right)_+$.
 \end{proposition}
 
 \begin{proof}  By \cite[Theorem 4.4]{farenick--mcburney2023}, the matrix $T_n$ is entangled in 
 $\left(\M_n(\mathbb C)\omin C(S^1)_{(n)} \right)_+$; thus, $T_n$ is entangled in the smaller cone
 $\left(C(S^1)^{(n)}\omin C(S^1)_{(n)} \right)_+$.
 \end{proof}

%%%%%%%%%%%%%%%%%%%%%%%%%%%%%%%%%%%%%%%%%%%%%%%%%%%%%%%%%%%%%%%%%%%%
\section{Categorical Relations}

%%%%%%%%%%%%%%%%%%%%%%%%%%
\subsection{C$^*$- and injective envelopes}

The operator system category contains 
all unital C$^*$-algebras as objects. For each operator system $\osr$, there is a unital C$^*$-algebra $\cstare(\osr)$,
called the \emph{C$^*$-envelope of $\osr$}, and unital linear complete order embedding 
$\iota_{\rm e}:\osr\rightarrow\cstare(\osr)$ in which the operator subsystem $\iota_{\rm e}(\osr)$ 
of $\cstare(\osr)$ generates $\cstare(\osr)$, such that the pair $\left(\cstare(\osr),\iota_{\rm e}\right)$ has the following 
universal property:
for every unital C$^*$-algebra $\A$ and every unital linear complete order
embedding $\phi:\osr\rightarrow\A$ 
for which $\phi(\osr)$ generates $\A$, there exists a unital surjective $*$-homomorphism $\pi:\A\rightarrow\cstare(\osr)$
such that $\iota_e=\pi\circ\phi$. The existence of the injective envelope was established by Hamana \cite{hamana1979b},
and it serves as a type of noncommutative \v Silov boundary for operator systems. 
One of the main tasks in the theory of operator systems is to identify $\cstare(\osr)$ for various operator systems $\osr$.

\begin{definition} An operator system $\osi$ is \emph{injective} if, for each operator system $\osr$,
ucp map $\alpha:\osr\rightarrow\osi$, and unital linear complete order embedding $\beta:\osr\rightarrow\ost$
of $\osr$ into an operator system $\ost$, there exists a ucp map $\phi:\ost\rightarrow\osi$ such that
$\alpha=\phi\circ\beta$.
\end{definition}

The Arveson Extension Theorem \cite{arveson1969,Paulsen-book} establishes the injectivity of the
type I factor $\B(\H)$, for any Hilbert space $\H$, which is the most most basic of injective operator systems.
Choi and Effros \cite{choi--effros1977} developed an abstract approach to injectivity, which Hamana used to
establish the existence of the injective envelope of operator systems \cite{hamana1979b}. 
For each operator system $\osr$, there is an injective operator system $\osi(\osr)$,
called the \emph{injective envelope of $\osr$}, and unital linear complete order embedding 
$\iota_{\rm ie}:\osr\rightarrow\osi(\osr)$ with the property that a sequence of operator subsystems of the form
\[
\iota_{\rm ie}(\osr)\subseteq \osi_0 \subseteq \osi,
\]
with $\osi_0$ injective, can occur only if $\iota_0=\osi$.

The relationship between C$^*$-envelopes and injective envelopes is captured by the following theorem
\cite[Chapter 15]{Paulsen-book}.

\begin{theorem} If $\osi$ is an injective operator system, then there is a unital C$^*$-algebra $\B$
such that $\osi\simeq \B$. Furthermore, for every operator system $\osr$, there are 
an injective unital C$^*$-alegbra $\B$, a unital C$^*$-subalgebra $\A\subseteq\B$, and a unital 
linear complete order embedding $\kappa:\osr\rightarrow\A$
such that
\begin{equation}\label{e:hamana1}
\cstar\left(\kappa(\osr)\right)=\A\simeq\cstare(\osr)
\end{equation}
and
\begin{equation}\label{e:hamana2}
\osr\simeq\kappa(\osr) \subseteq \A \subseteq \B \simeq \osi(\osr),
\end{equation}
where, if $\phi:\B\rightarrow\osi(\osr)$ is a unital linear complete order isomorphism implementing 
the categorical isomorphism $\B \simeq \osi(\osr)$, then $\iota_{\rm ie}=\phi\circ\kappa$.
\end{theorem}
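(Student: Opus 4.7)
The plan is to prove the two assertions in sequence. The first relies on the Choi--Effros product construction, and the second uses Hamana's existence of the injective envelope together with its rigidity property, which identifies $\cstare(\osr)$ as the C$^*$-subalgebra of $\osi(\osr)$ generated by $\iota_{\rm ie}(\osr)$.

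For the opening claim, I would embed $\osi$ as a unital operator subsystem of some $\B(\H)$ via the Choi--Effros Embedding Theorem. The injectivity of $\osi$ applied to this inclusion and to the identity map on $\osi$ produces a ucp idempotent $E:\B(\H)\to\osi$. Following Choi and Effros, I would then define a bilinear product on $\osi$ by $x\cdot y := E(xy)$ (with the original involution retained) and verify that this makes $\osi$ into a unital C$^*$-algebra $\B$ whose order structure coincides with the original one. The nontrivial verification is associativity of this product, which is handled by the standard multiplicative-domain argument via the Kadison--Schwarz inequality for ucp maps; the C$^*$-identity is then inherited from $\B(\H)$ on the range of $E$. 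This yields $\osi\simeq\B$ in $\mathfrak S_1$.

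For the second assertion, I would apply Hamana's theorem to produce the injective envelope $(\osi(\osr),\iota_{\rm ie})$ and then invoke the first assertion applied to $\osi(\osr)$ to obtain a unital C$^*$-algebra $\B$ and a unital complete order isomorphism $\phi:\B\to\osi(\osr)$. Setting $\kappa = \phi^{-1}\circ\iota_{\rm ie}$ and $\A = \cstar(\kappa(\osr))$ (the C$^*$-subalgebra of $\B$ generated by $\kappa(\osr)$) immediately gives the factorisation $\iota_{\rm ie} = \phi\circ\kappa$, together with the inclusion chain $\osr\simeq\kappa(\osr)\subseteq\A\subseteq\B\simeq\osi(\osr)$. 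Because $\kappa:\osr\to\A$ is a unital complete order embedding whose image generates $\A$, the universal property of the C$^*$-envelope then furnishes a surjective unital $*$-homomorphism $\pi:\A\to\cstare(\osr)$ satisfying $\pi\circ\kappa = \iota_{\rm e}$.

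The main obstacle is showing that $\pi$ is injective, so that it is a $*$-isomorphism and $\A\simeq\cstare(\osr)$. I would establish this using Hamana's rigidity property of the injective envelope, which asserts that the identity is the only ucp map $\osi(\osr)\to\osi(\osr)$ fixing $\iota_{\rm ie}(\osr)$ pointwise. Concretely, the map $\iota_{\rm ie}\circ\iota_{\rm e}^{-1}:\iota_{\rm e}(\osr)\to\osi(\osr)$ is a unital complete order embedding; injectivity of $\osi(\osr)$ extends it to a ucp map $\Psi:\cstare(\osr)\to\osi(\osr)$. Composing with $\pi$ and the inclusion $\A\hookrightarrow\B\simeq\osi(\osr)$, and then appealing once more to injectivity to extend to all of $\osi(\osr)$, produces a ucp endomorphism $\gamma:\osi(\osr)\to\osi(\osr)$ which, by construction, agrees with $\iota_{\rm ie}$ on $\iota_{\rm ie}(\osr)$. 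Rigidity forces $\gamma$ to be the identity map, which in turn forces $\ker\pi = 0$. Hence $\pi$ is a $*$-isomorphism, $\A\simeq\cstare(\osr)$, and all the stated relations hold.
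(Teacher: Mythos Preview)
The paper does not supply its own proof of this theorem; it is stated with a citation to \cite[Chapter 15]{Paulsen-book} and used as a black box. Your argument is correct and is precisely the standard route taken in that reference: the Choi--Effros product on the range of a ucp idempotent gives the first assertion, and Hamana's rigidity of the injective envelope identifies $\cstar(\kappa(\osr))$ with $\cstare(\osr)$ for the second. One small phrasing point: when you say $\gamma$ ``agrees with $\iota_{\rm ie}$ on $\iota_{\rm ie}(\osr)$'' you mean it fixes $\iota_{\rm ie}(\osr)$ pointwise, which is what rigidity requires; otherwise the argument is clean.
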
 

Hamana's work on injective envelopes \cite{hamana1979b} 
shows that there is an operator subsystem $\mathcal Q$ of $\osi(\osr)$ such that $\cstare(\osr)\simeq\mathcal Q$ and 
$\osr\subseteq\mathcal Q\subseteq\osi(\osr)$. If $\osr$ and $\cstare(\osr)$ have finite dimension, then,
because finite-dimensional C$^*$-algebras are injective operator systems, 
it must be that $\mathcal Q=\osi(\osr)$. Hence,
$\cstare(\osr)\simeq\osi(\osr)$, for every operator system $\osr$ in which both
$\osr$ and $\cstare(\osr)$ have finite dimension.
This fact makes it relatively easy to compute the injective envelopes of irreducible operator systems of matrices.

\begin{proposition}\label{not-cstar} If $\osr$ is an operator subsystem of $\M_n(\mathbb C)$ such that $\osr\not=\M_n(\mathbb C)$
and $\osr'\simeq\mathbb C$, then 
\begin{enumerate}
\item there is no unital C$^*$-algebra $\A$ for which $\osr\simeq\A$,
\item $\osr$ is not an injective operator system, and 
\item the C$^*$- and injective envelopes of $\osr$ are $\M_n(\mathbb C)$.
\end{enumerate}
\end{proposition}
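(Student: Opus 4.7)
The plan is to prove statement (3) first, identifying both $\cstare(\osr)$ and $\osi(\osr)$ with $\M_n(\mathbb C)$, and then to deduce (1) and (2) from this identification together with the dimension constraint $\osr\subsetneq\M_n(\mathbb C)$.

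First I would use the hypothesis $\osr'\simeq\mathbb C$ to conclude that the unital C$^*$-subalgebra of $\M_n(\mathbb C)$ generated by $\osr$ is all of $\M_n(\mathbb C)$: since $\osr$ is unital and selfadjoint, the double commutant theorem gives $\osr''=(\mathbb C 1_n)'=\M_n(\mathbb C)$, and in finite dimensions the von Neumann algebra and C$^*$-algebra generated by $\osr$ coincide. Next I would apply the universal property of the C$^*$-envelope to the inclusion $\phi:\osr\hookrightarrow\M_n(\mathbb C)$, which is a unital complete order embedding whose image generates $\M_n(\mathbb C)$ as a C$^*$-algebra: this produces a unital surjective $*$-homomorphism $\pi:\M_n(\mathbb C)\to\cstare(\osr)$ with $\iota_{\rm e}=\pi\circ\phi$. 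Because $\M_n(\mathbb C)$ is simple, $\pi$ must be injective, hence a $*$-isomorphism, so $\cstare(\osr)\simeq\M_n(\mathbb C)$. Since both $\osr$ and $\cstare(\osr)\simeq\M_n(\mathbb C)$ are finite-dimensional, the observation recorded immediately before the proposition gives $\osi(\osr)\simeq\cstare(\osr)\simeq\M_n(\mathbb C)$, which is statement (3).

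Statement (1) then follows by a dimension argument: if $\osr\simeq\A$ for some unital C$^*$-algebra $\A$, then by functoriality of the C$^*$-envelope (and the fact that $\cstare(\A)=\A$ for a unital C$^*$-algebra $\A$) one has $\A\simeq\cstare(\osr)\simeq\M_n(\mathbb C)$, forcing $\dim\osr=\dim\A=n^2$; this contradicts $\osr\subsetneq\M_n(\mathbb C)$. For (2), were $\osr$ itself injective, the minimality property in the definition of the injective envelope would force $\osi(\osr)\simeq\osr$, and combining with (3) gives $\osr\simeq\M_n(\mathbb C)$, again contradicting the strict containment on dimensional grounds.

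There is no real obstacle in this argument; the only mild subtlety is justifying that the C$^*$-algebra generated by $\osr$ inside $\M_n(\mathbb C)$ is the full matrix algebra, and this is handled in one line by the double commutant theorem in finite dimensions. Everything else is a packaging of simplicity of $\M_n(\mathbb C)$, the universal property of $\cstare(\osr)$, and the finite-dimensional identification $\cstare(\osr)\simeq\osi(\osr)$ already recorded in the excerpt.
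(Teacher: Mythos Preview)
Your proof is correct, and your treatment of (3) is essentially the same as the paper's (double commutant, simplicity of $\M_n(\mathbb C)$, and the finite-dimensional identification $\cstare(\osr)\simeq\osi(\osr)$). Where you genuinely diverge is in (1) and (2): the paper proves these \emph{first}, by invoking Arveson's Boundary Theorem---if $\osr$ were injective there would be a ucp idempotent $\Phi:\M_n(\mathbb C)\to\M_n(\mathbb C)$ with range $\osr$, and since its fixed-point set contains the irreducible system $\osr$, the Boundary Theorem forces $\Phi$ to be the identity, a contradiction; (1) then follows because a finite-dimensional C$^*$-algebra is injective. You instead reverse the order: having computed the envelopes in (3), you read off (1) from $\cstare(\A)=\A$ and a dimension count, and (2) from the minimality of the injective envelope. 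Your route is more elementary in that it avoids the Boundary Theorem entirely and uses only soft functorial facts about envelopes; the paper's route, on the other hand, gives a slightly sharper conclusion in passing (no ucp idempotent of $\M_n(\mathbb C)$ can have range $\osr$), which is later exploited in Corollary~\ref{no cp idempotent}.
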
 

\begin{proof}
If, on the contrary, $\osr\simeq\A$ for unital C$^*$-algebra $\A$, then $\A$ has finite dimension and is, therefore,
isomorphic to a finite direct sum of type I factors. By Arveson's Extension Theorem \cite{arveson1969}, type I factors are injective; hence,
$\A$ and, thus, $\osr$ are injective. With $\osr$
being injective, there must exist an idempotent ucp map $\Phi:\M_n(\mathbb C)\rightarrow\M_n(\mathbb C)$ for which
the range of $\Phi$ is $\osr$. Hence, the fixed point set of $\Phi$ contains $\osr$. Because $\osr'\simeq\mathbb C$, the fixed point
set of $\Phi$ is an irreducible operator subsystem of $\M_n(\mathbb C)$. By Arveson's Boundary Theorem
\cite{arveson1972}, $\Phi$ is necessarily the identity map, and so the range of $\Phi$ cannot be a proper subset
of $\M_n(\mathbb C)$. Hence, it cannot be that $\osr$ is injective or that $\osr\simeq\A$, proving (1) and (2).

To prove (3), 
recall that the C$^*$-envelope $\cstare(\osr)$ of $\osr$ is a quotient of the 
$C^*$-algebra generated by $\osr$. Because $\osr$ is a matrix operator system, $\cstar(\osr)=\osr''=\M_n(\mathbb C)$, which is a
simple C$^*$-algebra. Hence, $\cstare(\osr)=\M_n(\mathbb C)$ and, by the remarks preceding the statement of
the Proposition, $\cstare(\osr)=\osi(\osr)$.
\end{proof}

\begin{corollary} The conclusions of Proposition \ref{not-cstar} apply to Toeplitz operator systems.
\end{corollary}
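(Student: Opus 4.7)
The plan is to verify that $\osr = C(S^1)^{(n)}$ satisfies the two hypotheses of Proposition~\ref{not-cstar} for each $n \geq 2$, after which the three stated conclusions (non-isomorphism to any unital C$^*$-algebra, non-injectivity, and the identification of both the C$^*$-envelope and the injective envelope with $\M_n(\mathbb{C})$) transfer immediately to the Toeplitz operator system.

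The proper containment $C(S^1)^{(n)} \subsetneq \M_n(\mathbb{C})$ is immediate by dimension count, since $\dim_{\mathbb{C}} C(S^1)^{(n)} = 2n-1 < n^2 = \dim_{\mathbb{C}} \M_n(\mathbb{C})$ whenever $n \geq 2$. The substantive point is therefore to verify that the commutant $(C(S^1)^{(n)})'$ inside $\M_n(\mathbb{C})$ reduces to the scalars.

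To show this, note that the shift matrix $s = r_1$ and its adjoint $s^* = r_{-1}$ both lie in $C(S^1)^{(n)}$, so any $a \in (C(S^1)^{(n)})'$ commutes with each of them. The canonical basis vector $e_1$ is cyclic for $s$ (because $s^{k-1}e_1 = e_k$ for $k=1,\dots,n$), and so $a e_k = a s^{k-1} e_1 = s^{k-1}(a e_1)$ is determined by $a e_1$; reading off the entries shows that $a$ must be a lower-triangular Toeplitz matrix. The symmetric argument applied to $s^*$, which has $e_n$ as a cyclic vector, forces $a$ to be an upper-triangular Toeplitz matrix as well. The intersection of these two classes is exactly $\mathbb{C} \cdot 1_n$, giving $(C(S^1)^{(n)})' \simeq \mathbb{C}$. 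Equivalently, one can argue that $s$ and $s^*$ generate $\M_n(\mathbb{C})$ as a C$^*$-algebra (for instance, $1_n - s^*s = e_{nn}$ recovers a rank-one matrix unit, from which all other matrix units follow), and invoke the double commutant theorem.

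Having confirmed both hypotheses, the three conclusions of Proposition~\ref{not-cstar} apply to $C(S^1)^{(n)}$ with essentially no further work. I do not anticipate a real obstacle: the commutant computation is the only nontrivial step, and it is a classical fact about the nilpotent shift on $\mathbb{C}^n$.
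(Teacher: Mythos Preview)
Your proposal is correct and follows essentially the same approach as the paper: both reduce the commutant computation to the commutant of $\{r_1, r_{-1}\} = \{s, s^*\}$ and conclude it is the scalars. The paper's proof is a single sentence asserting this fact without justification, whereas you supply the standard argument (cyclic vector for $s$ forces lower-triangular Toeplitz, symmetrically for $s^*$) and also make explicit the dimension count for the proper containment $C(S^1)^{(n)} \subsetneq \M_n(\mathbb{C})$, which the paper leaves implicit.
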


\begin{proof} For each $n\geq 2$, the commutant of $C(S^1)^{(n)}$ in $\M_n(\mathbb C)$ is contained in the commutant of 
$\{r_1,r_{-1}\}$, which is simply $\{\alpha 1_n\,|\,\alpha\in\mathbb C\}$.
\end{proof} 

For Fej\'er-Riesz operator systems, a result similar to Proposition \ref{not-cstar} holds.

\begin{proposition}\label{poi} There is no unital C$^*$-algebra $\A$ for which $C(S^1)_{(n)}\simeq\A$, if $n\geq 2$.
\end{proposition}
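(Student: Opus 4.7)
The plan is to reduce the Fej\'er--Riesz case to the already-handled Toeplitz case via the Toeplitz--Fej\'er--Riesz duality of Theorem \ref{toeplitz duality}. Suppose, toward a contradiction, that $C(S^1)_{(n)}\simeq\A$ in $\mathfrak S_1$ for some unital C$^*$-algebra $\A$. Since $\dim C(S^1)_{(n)}=2n-1<\infty$, the algebra $\A$ is finite-dimensional, and therefore has the form $\A\cong\bigoplus_j\M_{k_j}(\mathbb C)$.

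The key auxiliary step is the observation that every finite-dimensional unital C$^*$-algebra is self-dual as an operator system: $\A^d\simeq\A$ in $\mathfrak S_1$. For a single matrix factor $\M_k(\mathbb C)$, the candidate unital complete order isomorphism $\Phi_k:\M_k(\mathbb C)\rightarrow\M_k(\mathbb C)^d$ is given by $\Phi_k(a)(x)=\tau_k(ax^t)$, where $\tau_k$ is the normalised trace and $(\cdot)^t$ denotes the transpose. Unitality is immediate, since $\Phi_k(1_k)=\tau_k$ is the Archimedean order unit on $\M_k(\mathbb C)^d$, and complete positivity in both directions follows from a direct Choi matrix calculation in which the Choi matrix of the $\M_k(\mathbb C)\rightarrow\M_p(\mathbb C)$ map associated with $[\Phi_k(a_{ij})]\in\M_p(\M_k(\mathbb C)^d)$ coincides, up to the canonical tensor flip, with $[a_{ij}]\in\M_p(\M_k(\mathbb C))$ itself. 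Taking an appropriate direct sum of such $\Phi_{k_j}$ yields $\A^d\simeq\A$ for a general finite-dimensional $\A$.

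With self-duality in hand, the argument concludes in a few lines. Functoriality of the duality operation in $\mathfrak S_1$ converts the hypothesis $C(S^1)_{(n)}\simeq\A$ into $\left(C(S^1)_{(n)}\right)^d\simeq\A^d$, and then Theorem \ref{toeplitz duality} together with self-duality of $\A$ gives
\[
C(S^1)^{(n)}\simeq\left(C(S^1)_{(n)}\right)^d\simeq\A^d\simeq\A.
\]
This contradicts the corollary immediately preceding Proposition \ref{poi}, which asserts that no unital C$^*$-algebra is unitally completely order isomorphic to the Toeplitz operator system $C(S^1)^{(n)}$.

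The principal subtle point is the self-duality of $\M_k(\mathbb C)$: the naive pairing $a\mapsto\operatorname{Tr}(a\cdot)$ is \emph{not} completely order preserving, because at the matricial level it exchanges the standard positive cone with its partial transpose. Incorporating the transpose into the defining pairing, as in $\Phi_k$ above, corrects this. Once this caveat is addressed, the rest of the argument is routine bookkeeping with duality in $\mathfrak S_1$ and an appeal to the Toeplitz case.
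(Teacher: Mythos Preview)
Your argument is correct and genuinely different from the paper's. The paper argues directly: if $C(S^1)_{(n)}\simeq\A$ with $\A$ a finite-dimensional C$^*$-algebra, then $\A$ is injective, hence $C(S^1)_{(n)}$ is injective and therefore coincides (up to isomorphism) with its own C$^*$-envelope; but Connes and van Suijlekom computed $\cstare(C(S^1)_{(n)})=C(S^1)$, which is infinite-dimensional, a contradiction. Your route instead dualises the hypothesis via Theorem \ref{toeplitz duality}, invokes the operator-system self-duality $\A^d\simeq\A$ of finite-dimensional C$^*$-algebras, and lands on $C(S^1)^{(n)}\simeq\A$, contradicting the corollary to Proposition \ref{not-cstar}. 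Your approach is attractive because it reuses the Toeplitz--Fej\'er--Riesz duality that is central to the paper and avoids appealing to the external C$^*$-envelope computation; the paper's approach is shorter and sidesteps the self-duality lemma entirely.

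One point worth tightening: the Archimedean order unit on a dual operator system is a choice, so when you pass from $C(S^1)_{(n)}\simeq\A$ to $(C(S^1)_{(n)})^d\simeq\A^d$ you should note that the induced complete order isomorphism carries the specific order unit used in Theorem \ref{toeplitz duality} to \emph{some} faithful positive functional $\delta_0$ on $\A$, and then use that the complete order automorphisms $x\mapsto VxV^*$ of each matrix summand act transitively on invertible positives, so $(\A^d,\delta_0)\simeq\A$ regardless of which $\delta_0$ arises. This is indeed routine, but it is the place where a careless reader might worry.
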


\begin{proof} If there were a unital C$^*$-algebra $\A$ for which $C(S^1)_{(n)}\simeq\A$, then the finite-dimensionality
of $\A$ would yield the injectivity of $C(S^1)_{(n)}$, implying that the C$^*$-envelope and injective envelope of $C(S^1)_{(n)}$
were also $\A$. However, the C$^*$-envelope of $C(S^1)_{(n)}$ is $C(S^1)$ \cite[Proposition 4.3]{connes-vansuijlekom2021},
which has infinite dimension.
\end{proof}

As noted in the proof of Proposition \ref{poi}, the C$^*$-envelopes of Fej\'er-Riesz operator systems
have been determined by Connes and van Suijlekom \cite[Proposition 4.3]{connes-vansuijlekom2021}, where they proved 
$\cstare\left( C(S^1)_{(n)}\right)= C(S^1)$, for every $n\geq 2$. The injective envelopes of
Fej\'er-Riesz operator systems are also abelian C$^*$-algebras, but with extremely disconnected spectra. 

\begin{lemma}\label{ie} If $\Delta_{S^1}=\displaystyle\lim_\leftarrow \beta X$, the topological inverse limit of the 
Stone-\v Cech compactifications $\beta X$, partially ordered by reverse inclusion,
of dense open subsets $X \subseteq S^1$, 
then $\Delta_{S^1}$ is a compact, Hausdorff, and
extremely disconnected topological space, and
\[
\osi\left(C(S^1)_{(n)}\right) \simeq C(\Delta_{S^1}),
\]
for every $n\geq 2$.
\end{lemma}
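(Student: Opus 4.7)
The plan is to recognize $\Delta_{S^1}$ as a realization of the Gleason cover (or absolute) of $S^1$, then combine the categorical identity $\osi(\osr)\simeq\osi(\cstare(\osr))$ with the determination $\cstare(C(S^1)_{(n)})\simeq C(S^1)$ from \cite{connes-vansuijlekom2021} to reduce the lemma to the classical Hamana computation of the injective envelope of a commutative unital C$^*$-algebra.

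First I would handle the three topological assertions. As an inverse limit of compact Hausdorff spaces along the continuous extensions $\beta X_2\to\beta X_1$ of the inclusions $X_2\subseteq X_1$, the space $\Delta_{S^1}$ is realized as a closed subspace of $\prod_{X}\beta X$ and is therefore compact and Hausdorff. Extremal disconnectedness is the key topological point; equivalently, by Gelfand duality, one must establish the Dedekind completeness of $C(\Delta_{S^1})_{\rm sa}$ as a lattice. The cleanest route is via the classical identification, due to Gleason, of $\displaystyle\lim_\leftarrow\beta X$ with the Stone space of the complete Boolean algebra of regular open subsets of $S^1$; that Stone space is extremally disconnected by construction and is the (unique up to homeomorphism) absolute of $S^1$.

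Second, the categorical reduction. The theorem preceding Proposition \ref{not-cstar} ensures $\cstare(\osr)\subseteq \osi(\osr)$ for every operator system $\osr$; thus $\osi(\osr)$ is an injective operator system containing $\cstare(\osr)$, and the minimality clause in the definition of the injective envelope applied to $\cstare(\osr)$ gives $\osi(\cstare(\osr))\subseteq \osi(\osr)$. Conversely $\osi(\cstare(\osr))$ is itself an injective operator system containing $\osr$, yielding the reverse inclusion and hence $\osi(\osr)\simeq\osi(\cstare(\osr))$. Applied with $\osr=C(S^1)_{(n)}$, this together with Connes--van Suijlekom's computation of the C$^*$-envelope reduces the task to proving $\osi(C(S^1))\simeq C(\Delta_{S^1})$, which is Hamana's theorem \cite{hamana1979b} to the effect that the injective envelope of a unital abelian C$^*$-algebra $C(Y)$ is $C(gY)$, where $gY$ is the Gleason cover of $Y$.

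The main obstacle is to verify, in a self-contained manner, that the inverse-limit space $\Delta_{S^1}$ genuinely realizes the Gleason cover $gS^1$ rather than some other extremally disconnected compactification of a dense subset. The strategy is to exhibit the canonical continuous surjection $\pi:\Delta_{S^1}\to S^1$--obtained as the component of the inverse limit at the trivial index $X=S^1$, equivalently as the common value of the Stone-\v Cech extensions $\beta X\to S^1$ of the inclusions $X\hookrightarrow S^1$--and to show that $\pi$ is irreducible, so that by the universal property of the absolute $\pi$ factors uniquely as a homeomorphism onto $gS^1$. Irreducibility (that no proper closed subset of $\Delta_{S^1}$ surjects onto $S^1$) is extracted from the density of each $X$ in $S^1$ together with a compatibility argument across the inverse system: a compatible family of ultrafilters on the various $X$'s realizes every point of $S^1$ in the fiber of $\pi$, and excising a point cannot preserve compatibility along a cofinal subfamily. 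Once this identification is in hand, the lemma follows.
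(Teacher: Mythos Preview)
Your reduction via $\osi(\osr)\simeq\osi(\cstare(\osr))$ together with $\cstare(C(S^1)_{(n)})\simeq C(S^1)$ matches the paper's opening move exactly. After that the routes diverge. You proceed topologically: identify $\Delta_{S^1}$ with the Gleason cover $gS^1$ by exhibiting an irreducible surjection $\pi:\Delta_{S^1}\to S^1$, and then invoke the fact that $\osi(C(Y))\simeq C(gY)$. The paper instead stays on the $\cstar$-algebra side: using \cite{semadeni1968} it writes $C(\Delta_{S^1})=\displaystyle\lim_{\rightarrow} C(\beta X)$ and recognizes this direct limit as the \emph{local multiplier algebra} $M_{\rm loc}(C(S^1))$; it then cites \cite{frank--paulsen2003} for the inclusion $M_{\rm loc}(\A)\subseteq\osi(\A)$ and \cite{Ara--Mathieu-book} for the fact that $M_{\rm loc}$ of a commutative $\cstar$-algebra is an abelian AW$^*$-algebra, hence injective. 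Thus $C(\Delta_{S^1})$ is an injective operator system sandwiched between $C(S^1)$ and its injective envelope, forcing equality, and extremal disconnectedness of $\Delta_{S^1}$ drops out from the AW$^*$-structure rather than from a separate Stone-space argument.

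Your approach is sound in outline, but two points deserve care. First, the statement you attribute to \cite{hamana1979b}---that $\osi(C(Y))\simeq C(gY)$---is not actually proved there; Hamana establishes existence and uniqueness of injective envelopes, and the commutative computation is a consequence one must assemble separately (essentially equivalent to what the paper does via local multipliers). Second, your sketch of irreducibility of $\pi$ via ``compatible families of ultrafilters'' is too vague to stand as written; a cleaner argument is to show that for any nonempty basic open set $U\subseteq\Delta_{S^1}$ the image $\pi(U)$ has nonempty interior in $S^1$, using that basic opens in the inverse limit arise from opens in some $\beta X$ and that $X$ is dense in $S^1$. The paper's local-multiplier route sidesteps both issues by importing off-the-shelf results, at the cost of heavier citations; your route would be more self-contained once those two points are filled in.
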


\begin{proof} Fix $n\geq 2$, and consider the canonical embedding of the operator system
$C(S^1)_{(n)}$ into its C$^*$-envelope $C(S^1)$. As an operator system and its C$^*$-envelope 
have the same injective envelope, it is enough to consider the injective envelope of $C(S^1)$.

Let
$O_d(S^1)=\{X\subseteq S^1\,|\,X\,\mbox{ is open and dense in }\,S^1\}$.
For each $X\in O_d(S^1)$, the set 
$C_0(X)$ of continuous complex-valued functions on $X$
that vanish at infinity is an essential ideal of the algebra $C(S^1)$,
and $C(\beta\Omega)$ is the multiplier algebra of this essential ideal,
where $\beta X$ denotes the Stone-\v Cech compactification of $X$.
Further, 
if $\iota_X:X\rightarrow \beta X$ denotes the
continuous embedding of $X$ as a dense subset of $\beta X$, then, because each
$X\in O_d(S^1)$ is open and, hence, locally compact \cite[Theorem XI.6.5]{Dugundji-book},
the embedding $\iota_X:X\rightarrow\beta X$ is an open map \cite[Theorem VII.7.3]{Dugundji-book}.
Therefore, $\iota_X(X)$ is a dense open subset of $\beta X$.
If
$X,Z\in O_d(S^1)$ satisfy $X\subset Z$, then $\iota_Z$ embeds $X$
into $\beta Z$  as a dense subset. Thus, $\beta Z$ is a
compactification of $X$ and so, by the Stone--\v Cech Theorem
\cite[Theorem 8.2]{Dugundji-book}, there is a unique continuous
function $\Phi_{Z,X}:\beta X\rightarrow \beta Z$ for which
$\Phi_{Z,X}\circ\iota_X=\iota_Z|_X$. Because $\iota_Z(X)$ is dense
in $\beta Z$, $\Phi_{Z,X}$ is a surjection. Note that if $X\subset
W\subset Z$, for $X,W,Z\in O_d(S^1)$, then
$\Phi_{Z,X}=\Phi_{Z,W}\circ\Phi_{W,X}$. Hence, $(\{\beta X\,:\,X\in
O_d(S^1)\},\, \Phi_{Z,X})$ is an inverse spectrum over $O_d(S^1)$
endowed with the order of reversed inclusion. Thus, if $\Delta_{S^1}=\displaystyle\lim_{\leftarrow}\,\beta X$,
then, by \cite{semadeni1968},
\[
C(\Delta_{S^1})=C\left( \lim_{\leftarrow}\,\beta X\right)=\lim_{\rightarrow} C(\beta X).
\]

Because the direct limit C$^*$-algebra 
$\displaystyle\lim_{\rightarrow} C(\beta X)$ is the local multiplier algebra \cite{Ara--Mathieu-book} of $C(S^1)$, 
and because the local multiplier algebra of any unital C$^*$-algebra $\A$ is an operator subsystem of 
the injective envelope of $\A$ \cite[Theorem 4.5]{frank--paulsen2003}, we deduce that $C(\Delta_{S^1})$ is an operator subsystem of the
injective envelope of $C(S^1)_{(n)}$ and contains $C(S^1)_{(n)}$ as an operator subsystem. 
However, as $C(\Delta_{S^1})$ is an abelian AW$^*$-algebra \cite[Proposition 3.1.5]{Ara--Mathieu-book},
$C(\Delta_{S^1})$ is injective, and so it must coincide with the injective envelope of $C(S^1)_{(n)}$.
Because the maximal ideal space of an abelian AW$^*$-algebra is compact, Hausdorff, and
extremely disconnected, the proof is complete.
\end{proof}

%%%%%%%%%%%%%%%
\subsection{Tensor relations for Toeplitz and Fej\'er-Riesz operator systems}

Some tensorial properties of $C(S^1)^{(n)}$ and $C(S^1)_{(n)}$ can be deduced directly from previous work, 
as indicated in the proposition below.

\begin{proposition}\label{non-equal and injective} 
Assuming $n,m\in\mathbb N$ are at least $2$, we have
\[
\begin{array}{rcl}
C(S^1)^{(n)}\omin C(S^1)^{(m)}&\not =& C(S^1)^{(n)}\omax C(S^1)^{(m)}, \mbox{ and } \\
C(S^1)_{(n)}\omin C(S^1)_{(m)}&\not =& C(S^1)_{(n)}\omax C(S^1)_{(m)}. 
\end{array}
\]
However, if $\ost$ is an injective operator system, then
\[
\begin{array}{rcl}
C(S^1)^{(n)}\omin\ost &=& C(S^1)^{(n)}\omax\ost 
\mbox{ and } \\
C(S^1)_{(n)}\omin\ost &=& C(S^1)_{(n)}\omax\ost.
\end{array}
\]
In particular,
\[
\begin{array}{rcl}
C(S^1)^{(n)}\omin\B(\H) &=& C(S^1)^{(n)}\omax\B(\H) 
\mbox{ and } \\
C(S^1)_{(n)}\omin\B(\H) &=& C(S^1)_{(n)}\omax\B(\H),
\end{array}
\]
for every Hilbert space $\H$.
\end{proposition}

\begin{proof} The first two assertions are established in \cite{farenick2021}.

Because $C(S^1)^{(n)}$ and $C(S^1)_{(n)}$ are operator subsystems of nuclear C$^*$-algebras, they 
are also exact operator systems and, therefore, have the local lifting property \cite[Theorem 6.6]{kavruk2014}.
Hence, the asserted tensorial properties hold by
\cite[Theorem 8.6]{kavruk--paulsen--todorov--tomforde2013}.
Alternatively, the exactness of
$C(S^1)^{(n)}$ and $C(S^1)_{(n)}$ imply they have the local lifting property \cite[Theorem 6.6]{kavruk2014}, which is equivalent
to being CP-stable \cite[Theorem 7.7]{goldbring--sinclair2017}, 
\cite[Proposition 2.5]{sinclair2017}. Thus, 
applying Theorem B of \cite{sinclair2017} to the CP-stable operator systems $C(S^1)^{(n)}$ and $C(S^1)_{(n)}$,
the asserted tensorial properties $C(S^1)^{(n)}\omin\B(\H) = C(S^1)^{(n)}\omax\B(\H)$
and $C(S^1)_{(n)}\omin\B(\H) = C(S^1)_{(n)}\omax\B(\H)$ hold.

A general property of an operator system $\osr$ 
for which $\osr\omin\B(\H) = \osr\omax\B(\H)$, for every Hilbert space $\H$, is that, for every unital C$^*$-algebra $\B$, 
a matrix $X\in \M_p(\osr\omin\B)$ is positive if and only if $X$ is a positive matrix in $\M_p(\osr\omax\osi(\B))$
\cite[Theorem 8.1]{kavruk--paulsen--todorov--tomforde2013}, where $\osi(\B)$ is the injective envelope of $\B$.
Thus, if $\ost$ is an injective operator system, then 
it coincides with its injective envelope and a 
matrix $X\in \M_p(\osr\omin\ost)$ is positive if and only if $X$ is a positive matrix in $\M_p(\osr\omax\ost)$, which is to say that
$\osr\omin\ost = \osr\omax\ost$. Because $C(S^1)^{(n)}$ and $C(S^1)_{(n)}$ satisfy the tensorial property with $\B(\H)$, we 
conclude that $C(S^1)^{(n)}$ and $C(S^1)_{(n)}$ satisfy the same property when tensored with any injective operator system.
\end{proof}

Recall from basic linear algebra
that if $\B$ is an algebra
and if $\phi:\osr\rightarrow\B$ and $\psi:\ost\rightarrow\B$ are linear maps of vector spaces $\osr$ and $\ost$ with commuting ranges
(i.e., $\phi(x)\psi(y)=\psi(y)\phi(x)$ for all $x\in\osr$, $y\in\ost$), then there exists a unique linear map
$\osr\otimes\ost\rightarrow\B$ in which $x\otimes y\mapsto \phi(x)\psi(y)$. This linear
map is denoted here by $\phi\cdot\psi$. 

\begin{definition}\label{oc} The \emph{commuting operator system tensor product}, $\oc$, 
of operator systems $\osr$ and $\ost$ is the operator system structure on the algebraic tensor product
$\osr\otimes\ost$ obtained by
declaring a matrix $x\in\M_p(\osr\otimes\ost)$ to be positive 
if $(\phi\cdot\psi)^{(p)}[x]$ is a positive element of $\M_p(\B(\H))$, for every Hilbert space $\H$ and every
pair of ucp maps
$\phi:\osr\rightarrow\B(\H)$ and $\psi:\ost\rightarrow\B(\H)$ with commuting ranges.
\end{definition}

The following definition is motivated by the defining conditions for the maximally entangled state in quantum theory.

\begin{definition} Suppose that $\{x_1,\dots,x_n\}$ is a linear basis for an operator system $\osr$,
and that $\{\varphi_1,\dots,\varphi_n\}$ is the associated dual basis for the operator system dual $\osr^d$. The element
\[
\xi=\sum_{j=1}^n x_j\otimes\varphi_j
\]
is called the \emph{maximally entangled positive element} of $\osr\omin\osr^d$.
\end{definition}

The terminology above is due to Kavruk \cite{kavruk2015}, who also proves the maximally entangled element $\xi$  is positive and that its definition
is independent of the choice of linear basis of $\osr$. However, Kavruk does not actually prove $\xi$ is entangled, and so this is explained in Proposition \ref{maxent}
below for Toeplitz and Fej\'er-Riesz operator systems.

In the case of the operator system $C(S^1)^{(n)}\omin C(S^1)_{(n)}$, the maximally
entangled positive element is represented with respect to the canonical linear bases by
\[
\xi_n=\sum_{\ell=-n+1}^{n-1}r_\ell\otimes \chi_{-\ell}.
\]

\begin{proposition}\label{maxent} $\xi_n$ is entangled in $\left(C(S^1)^{(n)}\omin C(S^1)_{(n)}\right)_+$.
\end{proposition}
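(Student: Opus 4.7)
The strategy is to recognise $\xi_n$ as the image of $T_n$ under a natural order symmetry of $C(S^1)^{(n)}\omin C(S^1)_{(n)}$, and then invoke Proposition \ref{T_n is max}. Define the circle-inversion map $\phi:C(S^1)_{(n)}\to C(S^1)_{(n)}$ by $\phi(f)(z)=f(z^{-1})$. This is the restriction to $C(S^1)_{(n)}$ of the unital $*$-automorphism of the abelian C$^*$-algebra $C(S^1)$ induced by the homeomorphism $z\mapsto z^{-1}$ of $S^1$; hence $\phi$ is automatically a unital complete order isomorphism of $C(S^1)_{(n)}$ onto itself with $\phi=\phi^{-1}$. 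On basis vectors $\phi(\chi_\ell)=\chi_{-\ell}$, so applying $\mathrm{id}_{C(S^1)^{(n)}}\otimes\phi$ to $T_n=\sum_\ell r_\ell\otimes \chi_\ell$ yields exactly $\xi_n=\sum_\ell r_\ell\otimes \chi_{-\ell}$.

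The ucp automorphism $\mathrm{id}\otimes\phi$ of $C(S^1)^{(n)}\omin C(S^1)_{(n)}$ preserves the base positive cone of the minimal tensor product and, because $\phi$ is positive, also carries the separability cone $\bigl(C(S^1)^{(n)}\bigr)_+\otimes_{\rm sep}\bigl(C(S^1)_{(n)}\bigr)_+$ into itself (a sum $\sum_j a_j\otimes b_j$ of positive elementary tensors is sent to $\sum_j a_j\otimes\phi(b_j)$, which is again of this form); the same holds for its inverse. Consequently, $T_n$ and $\xi_n$ are simultaneously positive in the minimal tensor product and simultaneously separable or entangled.

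By Proposition \ref{T_n is max}, $T_n$ belongs to $\bigl(C(S^1)^{(n)}\omax C(S^1)_{(n)}\bigr)_+$ but not to the separability cone. Since $(\cdot\omax\cdot)_+\subseteq(\cdot\omin\cdot)_+$ while the separability cone is defined purely algebraically, $T_n$ is likewise entangled in $\bigl(C(S^1)^{(n)}\omin C(S^1)_{(n)}\bigr)_+$. Transporting this via $\mathrm{id}\otimes\phi$ gives the same conclusion for $\xi_n$.

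\textbf{Main obstacle.} The argument is largely routine once the right symmetry is identified. The one subtlety worth noting is that the most tempting alternative, the transpose on the Toeplitz factor, also interchanges $r_\ell$ with $r_{-\ell}$ but fails to be completely positive (Proposition \ref{idempotent} already hints at this failure for $\M_2(\mathbb C)$); the symmetry must therefore be installed on the Fej\'er--Riesz factor, where the ambient C$^*$-algebra $C(S^1)$ is abelian and the circle inversion extends to a genuine $*$-automorphism, making complete positivity automatic.
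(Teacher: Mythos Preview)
Your proof is correct and follows essentially the same approach as the paper: both use the involution $\phi(\chi_\ell)=\chi_{-\ell}$ on $C(S^1)_{(n)}$ to transport the known entanglement of $T_n$ (Proposition~\ref{T_n is max}) to $\xi_n$. The only difference is in justifying complete positivity of $\phi$: you observe (more conceptually) that $\phi$ is the restriction of the $*$-automorphism of $C(S^1)$ induced by $z\mapsto z^{-1}$, whereas the paper invokes Proposition~\ref{pos-cp} via the positivity of $T_n$; both routes are valid and yield the same conclusion.
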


\begin{proof} The linear map $\phi_n:C(S^1)_{(n)}\rightarrow C(S^1)_{(n)}$ defined on the basis elements of $C(S^1)_{(n)}$
by $\phi_n(\chi_\ell)=\chi_{-\ell}$ has the property that
\[
\left(\mbox{\rm Id}_{C(S^1)^{(n)}}\otimes \phi_n\right)[\xi_n] = T_n,
\]
the universal positive Toeplitz matrix positive. The positivity of $T_n$ implies
the complete positivity of $\phi_n$, by Proposition \ref{pos-cp}; therefore, if $\xi_n$ were
separable, then $T_n$ would be too, contrary to the conclusions of Proposition \ref{T_n is entangled}.
Hence, $\xi_n$ must be entangled.
\end{proof}

Because the C$^*$-envelopes of Toeplitz and Fej\'er-Riesz operator systems are nuclear C$^*$-algebras,  
the following result is a useful tool for determining their categorical properties.

\begin{theorem}\label{nuc prop} The following statements are equivalent for a finite-dimensional operator system
$\osr$ for which $\cstare(\osr)$ is a nuclear C$^*$-algebra:
\begin{enumerate}
\item $\osr\omin\A=\osr\omax\A$, for every unital C$^*$-algebra $\A$;
\item $\osr\omin\cstar(\mathbb F_\infty)=\osr\omax\cstar(\mathbb F_\infty)$;
\item $\osr\omin\ost=\osr\oc\ost$, for every operator system $\ost$;
\item for every unital complete order embedding $\kappa:\osr\rightarrow\B(\H)$, there exists a ucp map
$\phi:\B(\H)\rightarrow\kappa(\osr)''$ such that $\phi\circ\kappa=\kappa$;
\item for every unital complete order embedding $\kappa:\osr\rightarrow\B(\H)$, there exists a ucp map
$\phi:\osi(\osr)\rightarrow\kappa(\osr)''$ such that $\phi(x)=\kappa(x)$, for every $x\in \osr$;
\item the maximally entangled element $\xi\in \osr\otimes\osr^d$ is positive in $\osr\oc\osr^d$.
\end{enumerate}
\end{theorem}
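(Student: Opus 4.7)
The plan is to establish the equivalences through a cycle of implications combined with a separate equivalence (1)$\Leftrightarrow$(2), exploiting the nuclearity hypothesis on $\cstare(\osr)$ at each step. First I would establish (1)$\Leftrightarrow$(2) by invoking Kavruk's detection theorem from \cite{kavruk2015}, which identifies $\cstar(\mathbb F_\infty)$ as a universal testing object for $\cstar$-nuclearity: the forward direction is trivial, and the converse rests on the fact that every unital $\cstar$-algebra is a quotient of some free group $\cstar$-algebra, together with the quotient behaviour of the maximal operator system tensor product.

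For (1)$\Rightarrow$(3), I would fix an operator system $\ost$ and embed it completely order isomorphically into its $\cstar$-envelope $\cstare(\ost)$. Condition (1) gives $\osr\omin\cstare(\ost)=\osr\omax\cstare(\ost)$, and the identity $\omax=\oc$ on tensor products with a unital $\cstar$-algebra \cite{kavruk--paulsen--todorov--tomforde2011} together with a coi-restriction to $\osr\otimes\ost$ yields (3). For the implication (3)$\Rightarrow$(4), given $\kappa:\osr\rightarrow\B(\H)$, pair $\osr$ with the commutant $\kappa(\osr)'$ via operator multiplication to obtain a ucp map $\osr\oc\kappa(\osr)'\rightarrow\B(\H)$; condition (3) upgrades this to a ucp map out of $\osr\omin\kappa(\osr)'$, which Arveson extension promotes to a map on $\B(\H)\omin\kappa(\osr)'$. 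Slicing against states of $\kappa(\osr)'$, together with the defining property of the double commutant, produces $\phi:\B(\H)\rightarrow\kappa(\osr)''$ with $\phi\circ\kappa=\kappa$.

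The equivalence (4)$\Leftrightarrow$(5) follows by embedding $\osi(\osr)$ into some $\B(\H)$ and using the injectivity of $\osi(\osr)$ to transport a ucp map from the ambient algebra to the injective envelope. For (5)$\Leftrightarrow$(6), I would use Kavruk's characterization of positivity of the maximally entangled element: $\xi\in\osr\otimes\osr^d$ corresponds under canonical duality to the identity map $\osr\rightarrow\osr$, and positivity in $\osr\oc\osr^d$ reflects factorization through commuting ucp maps, which is exactly the content of (5). Finally, (6)$\Rightarrow$(1) closes the loop: nuclearity of $\cstare(\osr)$ is used here to upgrade the factorization information encoded in $\xi$ to an identification of the $\omin$ and $\omax$ cones against any unital $\cstar$-algebra $\A$.

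The main obstacle I anticipate is the equivalence (1)$\Leftrightarrow$(2), which depends on Kirchberg-type lifting and detection results and is the most technically substantial ingredient. A secondary challenge is the slicing step in (3)$\Rightarrow$(4): producing a ucp map whose range lies in $\kappa(\osr)''$ rather than merely in $\B(\H)$ requires attentive use of the commuting tensor structure and the coi-inclusion $\kappa(\osr)\omin\kappa(\osr)'\subseteq\B(\H)$.
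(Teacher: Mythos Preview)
Your implication (1)$\Rightarrow$(3) has a gap: after establishing $\osr\omin\cstare(\ost)=\osr\oc\cstare(\ost)$, you cannot simply ``coi-restrict'' to $\osr\otimes\ost$, because the commuting tensor product $\oc$ is not injective. Concretely, an element $x\in\osr\otimes\ost$ that is positive in $\osr\oc\cstare(\ost)$ need not be positive in $\osr\oc\ost$: given commuting ucp maps $\phi:\osr\to\B(\H)$ and $\psi:\ost\to\B(\H)$, Arveson extension does produce a ucp $\tilde\psi:\cstare(\ost)\to\B(\H)$, but there is no reason its range still commutes with $\phi(\osr)$, since $\phi(\osr)'$ is not injective in general. (A correct direct route for (1)$\Rightarrow$(3) instead pushes $\psi$ forward first, via $\mbox{id}\otimes\psi:\osr\omin\ost\to\osr\omin\phi(\osr)'$, and applies (1) to the $\cstar$-algebra $\phi(\osr)'$; this is essentially \cite[Proposition~4.11]{kavruk2014}.)

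The paper organises the argument quite differently, and this is where the nuclearity hypothesis actually enters. Nuclearity of $\cstare(\osr)$ forces $\osr$ to be exact \cite[Proposition~4.10]{kavruk2014}, which by \cite[Theorem~5.7]{kavruk--paulsen--todorov--tomforde2013} is equivalent to $\osr\omin\ost=\osr\otimes_{\rm el}\ost$ for every $\ost$. Separately, conditions~(4) and~(5), the double commutant expectation property, are known to be equivalent to $\osr\otimes_{\rm el}\ost=\osr\oc\ost$ for every $\ost$ \cite[Theorem~7.3]{kavruk--paulsen--todorov--tomforde2013}. Combining these two under the standing hypothesis yields (3)$\Leftrightarrow$(4)$\Leftrightarrow$(5), with $\otimes_{\rm el}$ as the crucial intermediary; the remaining links (1)$\Leftrightarrow$(3), (2)$\Leftrightarrow$(4), and (1)$\Leftrightarrow$(6) are then quoted directly from \cite{kavruk2014}, \cite{kavruk--paulsen--todorov--tomforde2013}, and \cite{kavruk2015}. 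Your sketch never invokes $\otimes_{\rm el}$ or exactness, and the claim to exploit nuclearity of $\cstare(\osr)$ ``at each step'' is not substantiated; in particular your closing implication (6)$\Rightarrow$(1) is left entirely unexplained.
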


\begin{proof} Because $\cstare(\osr)$ is a nuclear C$^*$-algebra containing $\osr$ as an operator subsystem, the
operator system $\osr$ is exact in the category $\mathfrak S_1$ \cite[Proposition 4.10]{kavruk2014}. Hence,
$\osr\omin\ost=\osr\otimes_{\rm el}\ost$, for every operator system $\ost$ \cite[Theorem 5.7]{kavruk--paulsen--todorov--tomforde2013},
where $\otimes_{\rm el}$ is the operator system structure on $\osr\otimes\ost$ in which a matrix $X\in\M_p(\osr\otimes\ost)$
is positive if $X$ is positive in $\M_p(\osi(\osr)\omax\ost)$. By \cite[Theorem 7.3]{kavruk--paulsen--todorov--tomforde2013}, statements
(4) and (5) are equivalent, and each of these is
equivalent to the statement that $\osr\otimes_{\rm el}\ost=\osr\oc\ost$, for all operator systems $\ost$.
Hence, for operator systems in which $\cstare(\osr)$ is nuclear, we have (4) or (5) if and only if 
$\osr\omin\ost=\osr\otimes_{\rm el}\ost=\osr\oc\ost$ for all operator systems $\ost$, thereby showing the equivalence of (4), (5), and (3)
for operator systems $\osr$ in which $\cstare(\osr)$ is nuclear. The equivalence of statements (3) and (1) is given by \cite[Proposition 4.11]{kavruk2014},
and the equivalence of (4) and (2) is given by \cite[Theorem 7.6]{kavruk--paulsen--todorov--tomforde2013}. Finally, using for the first time the finite-dimensionality
of $\osr$, the equivalence of (1) and (6) is given by \cite[Theorem A.1]{kavruk2015}.
\end{proof}

It is perhaps worth noting explicitly that, as shown by the proof of Theorem \ref{nuc prop}, statements (1) through (5) are equivalent whenever $\cstare(\osr)$
is nuclear, regardless of whether $\osr$ has finite dimension or not.

\begin{proposition}\label{qaz} If $\osr$ is a $3$-dimensional Toeplitz or Fej\'er-Riesz operator system, then $\osr$ satisfies equivalent
condition (5) of Theorem \ref{nuc prop}.
\end{proposition}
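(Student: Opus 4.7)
The plan is to verify condition (6) of Theorem \ref{nuc prop} directly, by exhibiting the maximally entangled element as the image of the universal positive Toeplitz matrix $T_n$ under a natural ucp symmetry, and then invoking Proposition \ref{T_n is max}.

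First I would compute the maximally entangled element in the Toeplitz case. Using Toeplitz duality (Theorem \ref{toeplitz duality}) to identify $(C(S^1)^{(n)})^d\simeq C(S^1)_{(n)}$, the pairing formula $\varphi_T(f)=\sum_k \tau_{-k}\hat f(k)$ from (\ref{lf defn}) gives $(r_\ell,\chi_m)=\hat \chi_m(-\ell)=\delta_{m,-\ell}$, so the dual basis of $\{r_\ell\}_{\ell=-n+1}^{n-1}$ inside $(C(S^1)^{(n)})^d$ corresponds to $\{\chi_{-\ell}\}_{\ell=-n+1}^{n-1}$ in $C(S^1)_{(n)}$. Hence the maximally entangled element in $C(S^1)^{(n)}\otimes (C(S^1)^{(n)})^d$ is, under this identification,
\[
\xi=\sum_{\ell=-n+1}^{n-1} r_\ell\otimes \chi_{-\ell},
\]
which is precisely the element $\xi_n$ already introduced in Proposition \ref{maxent}.

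Next, I would relate $\xi_n$ to $T_n$ via a ucp symmetry. The map $\sigma:C(S^1)\to C(S^1)$ defined by $(\sigma f)(z)=f(z^{-1})$ is a unital $*$-automorphism of $C(S^1)$; a direct Fourier calculation yields $\widehat{\sigma f}(k)=\hat f(-k)$, so $\sigma$ preserves $C(S^1)_{(n)}$, and on the canonical basis $\sigma(\chi_\ell)=\chi_{-\ell}$. Being the restriction of a $*$-automorphism, $\sigma|_{C(S^1)_{(n)}}$ is a unital complete order isomorphism. By functoriality of $\omax$, the ampliation $\mbox{\rm Id}\otimes\sigma$ is ucp on $C(S^1)^{(n)}\omax C(S^1)_{(n)}$, and $(\mbox{\rm Id}\otimes\sigma)(T_n)=\xi_n$. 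Combined with Proposition \ref{T_n is max}, this gives $\xi_n\in (C(S^1)^{(n)}\omax C(S^1)_{(n)})_+$. Since the inclusion chain (\ref{e:ten rel}) yields $(\omax)_+\subseteq (\oc)_+$, we conclude $\xi_n\in (C(S^1)^{(n)}\oc C(S^1)_{(n)})_+$, establishing condition (6) for $\osr=C(S^1)^{(n)}$.

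For $\osr=C(S^1)_{(n)}$, the maximally entangled element (under the dual identification $(C(S^1)_{(n)})^d\simeq C(S^1)^{(n)}$ from Theorem \ref{toeplitz duality}) is the flip of $\xi_n$. Since the flip map is a unital complete order isomorphism for every operator system tensor product structure, including $\oc$, the positivity of $\xi_n$ in $C(S^1)^{(n)}\oc C(S^1)_{(n)}$ transfers to its flip in $C(S^1)_{(n)}\oc C(S^1)^{(n)}$. The only delicate point in the argument is the bookkeeping for the dual basis; once this is settled, the rest is a routine combination of $\omax$-functoriality with Proposition \ref{T_n is max}, and I expect no serious obstacle.
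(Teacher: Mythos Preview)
Your proof is correct and follows essentially the same approach as the paper: both arguments identify the maximally entangled element with $\xi_n=\sum_\ell r_\ell\otimes\chi_{-\ell}$, push $T_n$ to $\xi_n$ via the ucp involution $\chi_\ell\mapsto\chi_{-\ell}$, invoke Proposition~\ref{T_n is max} to place $\xi_n$ in $(\omax)_+\subseteq(\oc)_+$, and then handle the Fej\'er--Riesz case by duality/flip. Your justification of the complete positivity of that involution (as the restriction of the $*$-automorphism $f(z)\mapsto f(z^{-1})$ of $C(S^1)$) is in fact slightly more direct than the paper's, which routes this through Proposition~\ref{pos-cp} via Proposition~\ref{maxent}.
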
 

\begin{proof} Assume $\osr=C(S^1)^{(2)}$. We are to prove:
for every unital complete order embedding $\kappa:\osr\rightarrow\B(\H)$, there exists a ucp map
$\phi:\osi(\osr)\rightarrow\kappa(\osr)''$ such that $\phi(x)=\kappa(x)$, for every $x\in \osr$.
Because the injective envelope of $\osr$ is $\M_2(\mathbb C)$ and $\kappa(\osr)''$ is a von Neumann algebra,
a theorem of Haagerup \cite[Remark 3.7]{haagerup1983} establishes the extension $\phi$ of $\kappa$ with the
required co-domain $\kappa(\osr)''$. 

Since $\osr=C(S^1)^{(2)}$ satisfies equivalent
condition (5) of Theorem \ref{nuc prop}, so does $\osr^d=C(S^1)_{(2)}$, by equivalent condition (6).
\end{proof} 

Equivalent condition (1) in Theorem \ref{nuc prop}
has been noted by Kavruk in \cite{kavruk2014,kavruk2015} in the case of the $3$-dimensional Toeplitz or
Fej\'er-Riesz operator system, and so Proposition \ref{qaz} offers an alternative proof of Kavruk's results.

%%%%%%%%%%%%%%%%%%%%
\subsection{The weak expectation property}

An operator system $\osr$ has the weak expectation property if the canonical embedding of $\osr$ into its bidual $\osr^{dd}$
has a ucp extension $\osi(\osr)\rightarrow \osr^{dd}$. The weak expectation property is motivated by operator algebra theory, where the property has an
important role in a number of C$^*$-algebraic problems. However, in the operator system category $\mathfrak S_1$, it seems rather difficult for an operator system
that is not already a C$^*$-algebra to possess this property. The results in this subsection confirm neither Toeplitz or Fej\'er-Riesz operator systems
possess this property. 

\begin{proposition}\label{no-wep1} If $\osr$ is an operator subsystem of $\M_n(\mathbb C)$ such that $\osr\not=\M_n(\mathbb C)$
and $\osr'\simeq\mathbb C$, then $\osr$ does not have the weak expectation property.
\end{proposition}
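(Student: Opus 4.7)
The plan is to reduce the weak expectation property in this setting to injectivity, and then invoke Proposition \ref{not-cstar}, which has already ruled out injectivity.

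First, I would observe that, because $\osr\subseteq\M_n(\mathbb C)$, the operator system $\osr$ is finite-dimensional, and so the canonical map $\osr\to\osr^{dd}$ is a unital complete order isomorphism. Under this identification, the weak expectation property simply asserts that the identity map $\mathrm{id}_\osr:\osr\to\osr$ extends to a unital completely positive map $\Phi:\osi(\osr)\to\osr$. By Proposition \ref{not-cstar}, the hypotheses on $\osr$ give $\osi(\osr)\simeq\M_n(\mathbb C)$, so a WEP extension would in fact be a ucp retraction $\Phi:\M_n(\mathbb C)\to\osr$ with $\Phi|_\osr=\mathrm{id}_\osr$.

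Next I would compose such a retraction with the inclusion $\iota:\osr\hookrightarrow\M_n(\mathbb C)$ to obtain a ucp map $\iota\circ\Phi:\M_n(\mathbb C)\to\M_n(\mathbb C)$ which is idempotent and has range exactly $\osr$. The existence of such an idempotent ucp map on the injective operator system $\M_n(\mathbb C)$ makes $\osr$ a ucp retract of $\M_n(\mathbb C)$, which by the standard Choi--Effros characterisation (inherited from injectivity of $\M_n(\mathbb C)$) forces $\osr$ itself to be injective.

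Finally, I would conclude by citing Proposition \ref{not-cstar}(2), which asserts that under the present hypotheses $\osr$ is \emph{not} injective. This contradicts the conclusion reached in the previous paragraph and so rules out the existence of $\Phi$; hence $\osr$ cannot have the weak expectation property.

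There is essentially no technical obstacle here: the entire argument is an unwinding of definitions, relying on the three facts (i) $\osr$ is finite-dimensional, hence reflexive as an operator system; (ii) $\osi(\osr)=\M_n(\mathbb C)$, supplied by Proposition \ref{not-cstar}(3); and (iii) $\osr$ is not injective, supplied by Proposition \ref{not-cstar}(2). The only step that requires care is writing down the identification $\osr^{dd}\simeq\osr$ so that ``extending the canonical embedding into the bidual'' really does coincide with ``extending the identity map,'' but for finite-dimensional operator systems this is standard.
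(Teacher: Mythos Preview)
Your proposal is correct and follows essentially the same approach as the paper. Both arguments identify $\osr^{dd}$ with $\osr$ and $\osi(\osr)$ with $\M_n(\mathbb C)$, and then derive a contradiction from the existence of a ucp retraction $\M_n(\mathbb C)\to\osr$; the only cosmetic difference is that the paper invokes Arveson's Boundary Theorem directly at this point (to force the extension to be the identity on $\M_n(\mathbb C)$), whereas you route the contradiction through the already-established non-injectivity statement in Proposition~\ref{not-cstar}(2), whose proof in the paper is itself that same Boundary Theorem argument.
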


\begin{proof} Under the stated hypothesis, $\osr^{dd}=\osr$ and $\osi(\osr)=\M_n(\mathbb C)$, with 
the canonical embedding of $\osr$ into $\osr^{dd}$ being the identity map on $\osr$. Assume a ucp extension 
$\phi:\osi(\osr)\rightarrow \osr^{dd}$ of the identity on $\osr$ exists; then, by considering $\osr^{dd}$
as an operator subsystem of $\M_n(\mathbb C)$, $\phi$ is a ucp 
$\phi:\M_n(\mathbb C)\rightarrow \M_n(\mathbb C)$ that fixes $\osr$. Thus, 
by Arveson's Boundary Theorem, $\phi$ is the identity map.
However, if $\phi$ is the identity map, then its range is $\M_n(\mathbb C)$
rather than $\osr$, which contradicts the assumption that $\phi$ maps onto $\osr^{dd}=\osr$.
\end{proof}

\begin{corollary} For every $n\geq 2$, the operator system
$C(S^1)^{(n)}$ does not the weak expectation property.
\end{corollary}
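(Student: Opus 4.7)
The plan is to deduce the corollary as a direct instance of Proposition \ref{no-wep1}, applied with $\osr = C(S^1)^{(n)}$. Three hypotheses of that proposition must be checked: that $C(S^1)^{(n)}$ sits as an operator subsystem of $\M_n(\mathbb C)$, that this containment is strict, and that the commutant $\left(C(S^1)^{(n)}\right)' \subseteq \M_n(\mathbb C)$ equals $\mathbb C\cdot 1_n$.

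The first condition is built into the definition of $C(S^1)^{(n)}$. The second follows from a dimension count: $\dim C(S^1)^{(n)} = 2n-1$ while $\dim \M_n(\mathbb C) = n^2$, and $2n-1 < n^2$ for every $n\geq 2$, so the containment is proper. For the commutant condition, I would invoke the observation already made in the corollary following Proposition \ref{not-cstar}: since $\{r_1, r_{-1}\} = \{s, s^*\} \subseteq C(S^1)^{(n)}$, any element of the commutant of $C(S^1)^{(n)}$ must commute with both $s$ and $s^*$. A matrix commuting with the nilpotent shift $s$ must be an upper-triangular Toeplitz matrix (a polynomial in $s$), and commuting additionally with $s^*$ forces it to be simultaneously lower-triangular, hence diagonal Toeplitz, hence a scalar multiple of the identity. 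Therefore $\left(C(S^1)^{(n)}\right)' \simeq \mathbb C$.

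With all three hypotheses verified, Proposition \ref{no-wep1} delivers the conclusion immediately. There is no real obstacle here; the proposition was already stated in the general form needed, and the only nontrivial ingredient, the triviality of the commutant of the Toeplitz operator system, was recorded earlier in the paper as a remark about $C(S^1)^{(n)}$ being singly generated by $s$ and $s^*$. Thus the proof reduces to a single sentence citing Proposition \ref{no-wep1} together with the commutant computation.
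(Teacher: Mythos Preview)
Your proposal is correct and matches the paper's approach exactly: the corollary is stated without its own proof because it is an immediate application of Proposition \ref{no-wep1}, using the commutant computation already recorded in the corollary following Proposition \ref{not-cstar}. Your verification of the three hypotheses (operator subsystem of $\M_n(\mathbb C)$, proper containment via dimension, and trivial commutant via the shift) is precisely what is needed.
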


\begin{corollary}\label{no cp idempotent} If $n\geq 2$ and $\mathcal E:\M_n(\mathbb C)\rightarrow C(S^1)^{(n)}$ is an idempotent
linear transformation, then $\mathcal E$ is not completely positive. 
\end{corollary}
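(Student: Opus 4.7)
The plan is to reduce the statement to the setup of Proposition \ref{no-wep1}: show that a completely positive idempotent of the required type would necessarily be a \emph{unital} ucp map fixing $C(S^1)^{(n)}$ pointwise, and then apply Arveson's Boundary Theorem \cite{arveson1972} via the irreducibility of $C(S^1)^{(n)}$ inside $\M_n(\mathbb C)$ (which was established in the corollary to Proposition \ref{not-cstar}).

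First I would use the idempotency of $\mathcal E$ together with surjectivity onto $C(S^1)^{(n)}$ to show that $\mathcal E$ fixes every element of $C(S^1)^{(n)}$. Indeed, for each $x \in C(S^1)^{(n)}$ there exists $y \in \M_n(\mathbb C)$ with $x = \mathcal E(y)$, and then
\[
\mathcal E(x) = \mathcal E(\mathcal E(y)) = \mathcal E(y) = x.
\]
Since $1_n \in C(S^1)^{(n)}$, this in particular gives $\mathcal E(1_n) = 1_n$, so $\mathcal E$ is unital. Composing with the inclusion $C(S^1)^{(n)} \hookrightarrow \M_n(\mathbb C)$, we may regard $\mathcal E$ as a ucp map $\M_n(\mathbb C) \to \M_n(\mathbb C)$ whose fixed point set contains $C(S^1)^{(n)}$.

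Next I would invoke the fact (recorded in the corollary to Proposition \ref{not-cstar}) that the commutant of $C(S^1)^{(n)}$ in $\M_n(\mathbb C)$ is $\mathbb C 1_n$, so $C(S^1)^{(n)}$ is an irreducible operator subsystem of $\M_n(\mathbb C)$. Arveson's Boundary Theorem then forces any ucp map $\M_n(\mathbb C) \to \M_n(\mathbb C)$ that fixes $C(S^1)^{(n)}$ to be the identity on all of $\M_n(\mathbb C)$. But this contradicts the hypothesis that the range of $\mathcal E$ is the proper subspace $C(S^1)^{(n)} \subsetneq \M_n(\mathbb C)$, completing the proof.

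There is no real obstacle here; the heavy lifting is done by Arveson's Boundary Theorem through Proposition \ref{not-cstar}. The only minor subtlety worth highlighting is the step that promotes complete positivity plus idempotency of $\mathcal E$ to unitality, which is why surjectivity of $\mathcal E$ onto $C(S^1)^{(n)}$ is essential to the statement.
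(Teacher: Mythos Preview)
Your proof is correct and is essentially the same argument the paper uses, only with one layer of indirection removed. The paper's proof says that a completely positive idempotent $\mathcal E$ with range $C(S^1)^{(n)}$ would give $C(S^1)^{(n)}$ the weak expectation property, contradicting Proposition~\ref{no-wep1}; but the proof of Proposition~\ref{no-wep1} is precisely the Arveson Boundary Theorem argument you have written out (a ucp map $\M_n(\mathbb C)\to\M_n(\mathbb C)$ fixing the irreducible operator subsystem $C(S^1)^{(n)}$ must be the identity, contradicting properness of the range). So you have simply inlined the proof of Proposition~\ref{no-wep1} rather than quoting it.
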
 

\begin{proof} If $\mathcal E$ were completely positive, then $\mathcal E$ would be a 
completely positive extension of the canonical embedding (namely, the identity) of $C(S^1)^{(n)}$ into its
double dual, $C(S^1)^{(n)}$, to the injective envelope, $\M_n(\mathbb C)$, of $C(S^1)^{(n)}$. 
However, this would imply $C(S^1)^{(n)}$ has the weak expectation
property, contrary to Proposition \ref{no-wep1}.
\end{proof} 

To show $C(S^1)_{(n)}$ does not have the weak expectation property, the following lemma is required.

\begin{lemma}\label{lem2} If a finite-dimensional operator system $\osr$ has the weak expectation property, then
$\osr\omax\ost \subseteq_{\rm coi}  \osi(\osr)\omax\ost$, for every operator system $\ost$.
\end{lemma}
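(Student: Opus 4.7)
The plan is to exploit the functoriality of $\omax$ together with the key consequence of the weak expectation property in finite dimensions: since $\osr$ is finite-dimensional, the canonical double-dual embedding $\osr\hookrightarrow\osr^{dd}$ becomes (up to the standard identification) the identity map on $\osr$. Hence, by definition of WEP, there exists a ucp map $\phi:\osi(\osr)\rightarrow\osr$ whose restriction to $\osr$ is $\mathrm{id}_\osr$.

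The inclusion $\osr\omax\ost\subseteq_{{}_+}\osi(\osr)\omax\ost$ is the easy direction: the canonical inclusion $\iota:\osr\rightarrow\osi(\osr)$ is a unital complete order embedding, and tensoring a ucp map on one side of $\omax$ always yields a ucp map by functoriality of the maximal tensor product. So the identity on $\osr\otimes\ost$ extends to a ucp map $\osr\omax\ost\rightarrow\osi(\osr)\omax\ost$.

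For the complete order embedding (``coi'') assertion, I would use the ucp map $\phi$ above to construct a ucp ``retraction''
\[
\phi\otimes \mathrm{id}_\ost:\osi(\osr)\omax\ost\longrightarrow \osr\omax\ost,
\]
again by functoriality of $\omax$. Now suppose $x\in\M_p(\osr\otimes\ost)$ is positive in $\M_p(\osi(\osr)\omax\ost)$. Applying the $p$-th ampliation of $\phi\otimes\mathrm{id}_\ost$ yields a positive element of $\M_p(\osr\omax\ost)$. But since $x$ already lies in $\M_p(\osr\otimes\ost)$ and $\phi|_\osr=\mathrm{id}_\osr$, one has $(\phi\otimes\mathrm{id}_\ost)^{(p)}[x]=x$. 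Therefore $x$ is positive in $\M_p(\osr\omax\ost)$, which is precisely what is required for the coi embedding.

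There is no real obstacle here; the argument is a direct application of the defining property of WEP combined with functoriality of $\omax$. The only mildly delicate point worth stating carefully in the proof is the identification $\osr^{dd}\simeq\osr$ in the finite-dimensional setting, so that the WEP extension lands in $\osr$ itself and acts as the identity on $\osr$. Once this is in place, the retraction argument closes both inclusions of matrix cones at every matrix level $p$.
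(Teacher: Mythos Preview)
Your proof is correct and is essentially identical to the paper's own argument: both use finite-dimensionality to identify $\osr^{dd}$ with $\osr$, extract the ucp retraction $\phi:\osi(\osr)\rightarrow\osr$ fixing $\osr$, and then apply $(\phi\otimes\mathrm{id}_\ost)^{(p)}$ to pull positivity in $\M_p(\osi(\osr)\omax\ost)$ back to $\M_p(\osr\omax\ost)$. The only difference is that you spell out the forward inclusion and the functoriality of $\omax$ a bit more explicitly than the paper does.
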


\begin{proof} The finite-dimensionality of $\osr$ implies that $\osr^{dd}\simeq\osr$, and so the weak expectation property of
$\osr$ asserts, assuming the canonical inclusion of $\osr$ as an operator subsystem of $\osi(\osr)$, that there is a ucp map
$\phi:\osi(\osr)\rightarrow\osr$ such that $\phi(x)=x$, for every $x\in \osr$.

Select $p\in\mathbb N$ and a $X\in\M_p(\osr\otimes\ost)$ such that 
$X\in \M_p(\osi(\osr)\omax\ost)_+$. 
Thus,
\[
\left(\phi^{(p)}\otimes\mbox{\rm id}_\ost\right)[X] \in \left(\M_p(\osr)\omax\ost\right)_+=\M_p(\osr\omax\ost)_+.
\]
Hence, $X$ is a positive matrix of $\M_p(\osr\omax\ost)$.
\end{proof}

\begin{proposition} The operator system $C(S^1)_{(n)}$ does not have the weak expectation property, for every $n\geq 2$.
\end{proposition}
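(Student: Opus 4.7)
The plan is to argue by contradiction: assuming that $C(S^1)_{(n)}$ has the weak expectation property, I will derive the equality $C(S^1)_{(n)}\omin C(S^1)_{(m)} = C(S^1)_{(n)}\omax C(S^1)_{(m)}$ for some $m\geq 2$, contradicting Theorem \ref{non-equal}. The idea is to sandwich both the min and max tensor products of $C(S^1)_{(n)}$ with $C(S^1)_{(m)}$ inside a common tensor product with the injective envelope $\osi(C(S^1)_{(n)})$, which by Lemma \ref{ie} is the abelian C$^*$-algebra $C(\Delta_{S^1})$.

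First, suppose $C(S^1)_{(n)}$ has the weak expectation property. By Lemma \ref{lem2}, for every operator system $\ost$,
\[
C(S^1)_{(n)}\omax\ost \; \subseteq_{\rm coi}\; \osi\left(C(S^1)_{(n)}\right)\omax\ost
\; = \; C(\Delta_{S^1})\omax\ost,
\]
using the identification of $\osi\left(C(S^1)_{(n)}\right)$ from Lemma \ref{ie}. Taking $\ost=C(S^1)_{(m)}$, this embeds $C(S^1)_{(n)}\omax C(S^1)_{(m)}$ into $C(\Delta_{S^1})\omax C(S^1)_{(m)}$ as a complete order inclusion.

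Next, because $C(\Delta_{S^1})$ is an abelian, hence nuclear, unital C$^*$-algebra, tensoring with it on the operator system side coincides at the minimal and maximal levels: $C(\Delta_{S^1})\omax C(S^1)_{(m)} = C(\Delta_{S^1})\omin C(S^1)_{(m)}$. (This is the standard characterisation that a unital C$^*$-algebra $\A$ is nuclear if and only if $\A\omin\ost = \A\omax\ost$ for every operator system $\ost$.) On the other hand, $C(S^1)_{(n)}$ sits inside its injective envelope $C(\Delta_{S^1})$ as an operator subsystem, and the minimal tensor product is injective with respect to complete order embeddings, so
\[
C(S^1)_{(n)}\omin C(S^1)_{(m)} \;\subseteq_{\rm coi}\; C(\Delta_{S^1})\omin C(S^1)_{(m)}.
\]

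Chaining these three identifications, any matrix $X\in\M_p\left(C(S^1)_{(n)}\otimes C(S^1)_{(m)}\right)$ that is positive in the minimal tensor product is positive in $C(\Delta_{S^1})\omin C(S^1)_{(m)} = C(\Delta_{S^1})\omax C(S^1)_{(m)}$, and therefore positive in $C(S^1)_{(n)}\omax C(S^1)_{(m)}$ by the complete order inclusion furnished by the weak expectation property. Combined with the general inequality $C(S^1)_{(n)}\omax C(S^1)_{(m)} \subseteq_{{}_{+}} C(S^1)_{(n)}\omin C(S^1)_{(m)}$ from (\ref{e:ten rel}), this forces the equality of the two tensor products at every matrix level, contradicting Theorem \ref{non-equal}. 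Hence $C(S^1)_{(n)}$ cannot have the weak expectation property.

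The main obstacle is ensuring that the nuclearity of the abelian C$^*$-algebra $C(\Delta_{S^1})$ actually guarantees coincidence of $\omin$ and $\omax$ when the second factor is the operator system $C(S^1)_{(m)}$ (rather than a C$^*$-algebra). This is handled by the operator system-theoretic characterisation of C$^*$-algebra nuclearity via tensor products with arbitrary operator systems, as used earlier in this paper.
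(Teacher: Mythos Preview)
Your argument is correct and follows essentially the same route as the paper's proof: assume the weak expectation property, invoke Lemma~\ref{lem2} to completely order embed the max tensor product into $C(\Delta_{S^1})\omax\ost$, use nuclearity of the abelian C$^*$-algebra $C(\Delta_{S^1})$ to pass to $C(\Delta_{S^1})\omin\ost$, and then use injectivity of $\omin$ to deduce min${}={}$max at every matrix level, contradicting Theorem~\ref{non-equal}. The only difference is cosmetic: the paper takes $\ost=C(S^1)^{(n)}$ (the Toeplitz system) as the second tensor factor, whereas you take $\ost=C(S^1)_{(m)}$ (another Fej\'er--Riesz system); both choices are covered by Theorem~\ref{non-equal}, so either works.
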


\begin{proof} Suppose, on the contrary, that $C(S^1)_{(n)}$ has the weak expectation property, for some $n\geq 2$.
In Lemma \ref{lem2}, let $\osr=C(S^1)_{(n)}$ and $\ost=C(S^1)^{(n)}$. Thus, using Lemmas \ref{ie} and \ref{lem2}, 
\[
C(S^1)_{(n)}\omax C(S^1)^{(n)} \subseteq_{\rm coi}  C(\Delta_{S^1})\omax C(S^1)^{(n)} = C(\Delta_{S^1})\omin C(S^1)^{(n)}.
\]
Therefore, if $p\in\mathbb N$ and $X\in\M_p(C(S^1)_{(n)}\otimes C(S^1)^{(n)})$ is such that 
\[
X\in \M_p(C(\Delta_{S^1})\omin C(S^1)^{(n)})_+,
\]
then $X\in \M_p(C(S^1)_{(n)}\omin C(S^1)^{(n)})$, as $\osr_1\omin\osq\subseteq_{\rm coi} \osr_2\omin\osq$
for all operator systems $\osr_2$ and $\osq$, and all operator subsystems $\osr_1\subseteq\osr_2$.
Hence, 
\[
C(S^1)_{(n)}\omin C(S^1)^{(n)} = C(S^1)_{(n)}\omax C(S^1)^{(n)},
\]
in contradiction to Proposition 6.7 of \cite{farenick2021}.
\end{proof}

%%%%%%%%%%%%%%%%%%%%%%%%%%%%%%%
\subsection{Quotient operator systems and purity of truncation}
Drawing on the work in \cite{kavruk--paulsen--todorov--tomforde2013},
if $\phi:\osr\rightarrow\ost$ is a completely positive linear map of operator systems, and if $\mathcal J$ denotes $\ker\phi$,
then the quotient vector space $\osr/\mathcal J$ is an operator system in which $\M_n(\osr/\mathcal J)_+$ is defined to be the
set of all (cosets) $\dot h\in \M_n(\osr/\mathcal J)$ with the property that, for each $\varepsilon>0$, there is a selfadjoint matrix
$k_\varepsilon\in  \M_n(\mathcal J)$ such that $\varepsilon e_\osr + h + k_\varepsilon \in \M_n(\osr)_+$. (This definition makes use of
the canonical identification of $\M_n(\osr/\mathcal J)$ with $\M_n(\osr)/\M_n(\mathcal J)$.) Furthermore, the First Isomorphism Theorem asserts 
there is a completely positive linear map $\dot\phi:\osr/\mathcal J\rightarrow\ost$ such that $\phi=\dot\phi\circ\pi_{\mathcal J}$, where 
$\pi_{\mathcal J}:\osr\rightarrow\ost/\mathcal J$ is the canonical projection onto the quotient vector space. 
The completely positive linear map $\phi$ is said to be a \emph{complete quotient map} if $\dot\phi$ is a complete order isomorphism.

\begin{proposition}\label{emb1} If $n,m\in\mathbb N$ satisfy $n\leq m$,
then the canonical inclusion map
\[
\iota_{n,m}:C(S^1)_{(n)}\rightarrow C(S^1)_{(m)}
\]
is a unital complete order embedding.
\end{proposition}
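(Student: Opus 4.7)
The plan is short because the statement is essentially tautological from the way Fej\'er-Riesz operator systems are defined. Both $C(S^1)_{(n)}$ and $C(S^1)_{(m)}$ are given as operator subsystems of the unital abelian C$^*$-algebra $C(S^1)$, with common Archimedean order unit $\chi_0$. By the operator subsystem definition, $\M_p\left(C(S^1)_{(n)}\right)_+ = \M_p\left(C(S^1)_{(n)}\right)\cap \M_p(C(S^1))_+$ and likewise for $C(S^1)_{(m)}$. The linear containment $C(S^1)_{(n)}\subseteq C(S^1)_{(m)}$ is immediate because any $f\in C(S^1)$ with $\hat f(k)=0$ whenever $|k|\geq n$ also satisfies $\hat f(k)=0$ whenever $|k|\geq m$, using $n\leq m$; in particular $\iota_{n,m}$ is unital.

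First I would record that, viewing an element $F=[f_{ij}]\in\M_p(C(S^1)_{(n)})$ as a matrix-valued trigonometric polynomial $F:S^1\rightarrow\M_p(\mathbb C)$, the following three conditions are equivalent: (i)~$F\in \M_p(C(S^1)_{(n)})_+$; (ii)~$F(z)\in\M_p(\mathbb C)_+$ for every $z\in S^1$; (iii)~$\iota_{n,m}^{(p)}(F)\in\M_p(C(S^1)_{(m)})_+$. The equivalence (i)$\Leftrightarrow$(ii) follows by applying the operator subsystem definition to the inclusion $C(S^1)_{(n)}\subseteq C(S^1)$, and (ii)$\Leftrightarrow$(iii) follows by applying the same definition to $C(S^1)_{(m)}\subseteq C(S^1)$. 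The resulting equivalence (i)$\Leftrightarrow$(iii), holding for every $p\in\mathbb N$, is exactly the assertion that $\iota_{n,m}$ is a unital complete order embedding.

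There is really no obstacle in this argument; the only point requiring care is to be explicit that the operator system structure in use on each $C(S^1)_{(n)}$ is the one inherited from $C(S^1)$, so that both domain and codomain positivity are characterised pointwise on $S^1$. Once that is noted, the embedding follows formally from the transitivity of operator subsystem structures.
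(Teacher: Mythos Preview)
Your proof is correct and is essentially the same as the paper's: both arguments rest on the fact that the operator system structures on $C(S^1)_{(n)}$ and $C(S^1)_{(m)}$ are inherited from the common ambient C$^*$-algebra $C(S^1)$, so that matrix positivity in either system is characterised pointwise on $S^1$. The only cosmetic difference is that the paper phrases this via complete isometry (computing the matrix norm as $\max_{z\in S^1}\|[F_{ij}(z)]\|$ in both systems) and then invokes the equivalence of unital complete isometry with complete order embedding, whereas you work directly with the matrix cones; the underlying content is identical.
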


\begin{proof} It is sufficient to proof that $\iota_{n,m}$ is a complete isometry, as a ucp map is a complete isometry
if and only if it is a complete order embedding. To this end, select $p\in\mathbb N$ and consider
\[
\iota_{n,m}^{(p)}: \M_p\left( C(S^1)_{(n)}\right)\rightarrow \M_p\left( C(S^1)_{(m)}\right).
\]
Regardless of whether $X=[F_{ij}]_{i,j=1}^n$ is considered as an element of $\M_p\left( C(S^1)_{(n)}\right)$ or of
$\M_p\left( C(S^1)_{(m)}\right)$, the norm of $X$ in both operator systems is given by
\[
\max_{z\in S^1}\left\| [F_{ij}(z)]_{i,j=1}^n\right\|,
\]
which implies $\| \iota_{n,m}^{(p)}(X)\|=\|X\|$.
\end{proof}

\begin{corollary}\label{prj1}
If $n,m\in\mathbb N$ satisfy $n\leq m$, and if 
\[
\mathcal q_{m,n}:C(S^1)^{(m)}\rightarrow C(S^1)^{(n)}
\]
is the canonical projection of each matrix in $C(S^1)^{(m)}$ onto its leading $n\times n$ principal submatrix, then
$\mathcal q_{m,n}=\iota_{n,m}^d$, $\mathcal q_{m,n}$ is a complete quotient map, and
\[
C(S^1)^{(n)} \simeq C(S^1)^{(m)}/\ker\mathcal q_{m,n}
\]
in the operator system  category $\mathfrak S_1$.
\end{corollary}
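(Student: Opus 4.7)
The plan is to derive this corollary from Proposition \ref{emb1} by invoking operator system duality. First, I would identify the truncation map $\mathcal q_{m,n}$ with the dual map $\iota_{n,m}^d$ under the Toeplitz-Fej\'er-Riesz duality of Theorem \ref{toeplitz duality}. The adjoint $\iota_{n,m}^d$ sends $\psi \in (C(S^1)_{(m)})^d$ to the restriction $\psi|_{C(S^1)_{(n)}}$, while the isomorphism $\delta: C(S^1)^{(m)} \to (C(S^1)_{(m)})^d$ sends a Toeplitz matrix $T=[\tau_{k-\ell}]_{k,\ell=0}^{m-1}$ to the functional $\varphi_T$ specified by \eqref{lf defn}. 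Because every $f \in C(S^1)_{(n)}$ has Fourier coefficients that vanish for $|k| \geq n$, the restriction $\varphi_T|_{C(S^1)_{(n)}}$ depends only on the coefficients $\tau_{-k}$ for $|k|\leq n-1$ — which are precisely the entries of the leading $n\times n$ principal block of $T$. This gives the identification $\mathcal q_{m,n} = \iota_{n,m}^d$, modulo the canonical duality isomorphisms established in Theorem \ref{toeplitz duality}.

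Second, I would appeal to the general duality between unital complete order embeddings and complete quotient maps for finite-dimensional operator systems, as developed in \cite{farenick--paulsen2012} (see also \cite{kavruk--paulsen--todorov--tomforde2013}): the dual of a unital complete order embedding is a complete quotient map. Since $\iota_{n,m}$ is a unital complete order embedding by Proposition \ref{emb1}, it will follow that $\mathcal q_{m,n}$ is a complete quotient map.

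Finally, an application of the First Isomorphism Theorem for operator systems yields the unital complete order isomorphism
\[
C(S^1)^{(n)} \simeq C(S^1)^{(m)}/\ker\mathcal q_{m,n}
\]
in $\mathfrak S_1$, completing the argument.

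The main potential obstacle is step one, verifying that $\mathcal q_{m,n}$ coincides with $\iota_{n,m}^d$; this is essentially a bookkeeping computation that tracks the Toeplitz-Fej\'er-Riesz duality pairing carefully, though it is computational rather than conceptual in nature. Steps two and three are formal appeals to established operator system machinery, so the bulk of the work lies in the explicit identification at the dual level.
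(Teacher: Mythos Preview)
Your proposal is correct and follows essentially the same approach as the paper's proof: both identify $\mathcal q_{m,n}$ with $\iota_{n,m}^d$ via the Toeplitz--Fej\'er-Riesz duality of Theorem~\ref{toeplitz duality} by tracking the Fourier coefficients, then invoke \cite[Proposition~1.15]{farenick--paulsen2012} to conclude that the dual of the complete order embedding $\iota_{n,m}$ is a complete quotient map. The only cosmetic difference is that the paper first establishes $\iota_{n,m}^d$ is a complete quotient map and then verifies the identification with $\mathcal q_{m,n}$, whereas you reverse this order.
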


\begin{proof} By Proposition \ref{emb1}, 
$\iota_{n,m}:C(S^1)_{(n)}\rightarrow C(S^1)_{(m)}$ is a unital complete order embedding; hence, the dual map
$\iota_{n,m}^d:(C(S^1)_{(m)})^d\rightarrow (C(S^1)_{(n)})^d$ is a complete quotient map
\cite[Proposition 1.15]{farenick--paulsen2012}. Thus, in the category $\mathfrak S_1$, 
\[
(C(S^1)_{(n)})^d \simeq (C(S^1)_{(m)})^d / \ker\iota_{n,m}^d.
\]
The Duality Theorem (Theorem \ref{toeplitz duality}) identifies each $(C(S^1)_{q})^d$ with $C(S^1)^{q}$; hence, we
need only show the identification of $\mathcal q_{m,n}$ and $\iota_{n,m}^d$.
To this end,
first note that if $\varphi$ is a linear functional on $C(S^1)_{(m)}$, then it is also a linear functional on
$C(S^1)_{(n)}$ and the adjoint $\iota_{n,m}^d$ of the inclusion map $\iota_{n,m}$ simply sends $\varphi$ to the
functional on $C(S^1)_{(n)}$ whose action on $f\in C(S^1)_{(n)}$ is given by $\varphi(f)$.
The Duality Theorem (Theorem \ref{toeplitz duality}) identifies a linear functional $\varphi$ on $C(S^1)_{(m)}$
with the Toeplitz matrix $t_\varphi=[\tau_{k-j}]$, where $\tau_\ell=\varphi(\chi_{-\ell})$, for $\ell=-m+1,\dots,m-1$, and 
identifies the linear functional $\iota_{n,m}^d(\varphi)$ on $C(S^1)_{(n)}$
with the Toeplitz matrix $t_{\iota_{n,m}^d(\varphi)}=[\tau_{k-j}]$, where $\tau_\ell=\varphi(\chi_{-\ell})$, for $\ell=-n+1,\dots,n-1$.
Hence, after the identification of $(C(S^1)_{q})^d$ with $C(S^1)^{q}$ via the Duality Theorem, the adjoint map
$\iota_{n,m}^d$ is given by $\mathcal q_{m,n}$.
\end{proof}

\begin{corollary}\label{rdc} The ucp map 
\[
\mathcal q_{m,n}\otimes \mathcal q_{m,n}: C(S^1)^{(m)}\omax C(S^1)^{(m)}\rightarrow C(S^1)^{(n)}\omax C(S^1)^{(n)}
\]
is a complete quotient map.
\end{corollary}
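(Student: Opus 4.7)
The plan is to derive this from the complete quotient property of $\mathcal q_{m,n}$ established in Corollary \ref{prj1} by dualising via the Tensor Duality Theorem (Theorem \ref{tensor duality}). I will work with the inclusion $\iota_{n,m}: C(S^1)_{(n)}\to C(S^1)_{(m)}$, which Proposition \ref{emb1} identifies as a unital complete order embedding.

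First, I would invoke the standard ``injectivity'' property of $\omin$ in the operator system category: the tensor product of two unital complete order embeddings is again a complete order embedding with respect to $\omin$ (this is an immediate consequence of the defining condition for positivity in $\omin$, since every ucp map factoring through $C(S^1)_{(n)}$ into $\M_k(\mathbb C)$ extends, via Arveson, to a ucp map on $C(S^1)_{(m)}$). Applied to $\iota_{n,m}\otimes\iota_{n,m}$, this yields the complete order embedding
\[
\iota_{n,m}\otimes\iota_{n,m}:\, C(S^1)_{(n)}\omin C(S^1)_{(n)} \to C(S^1)_{(m)}\omin C(S^1)_{(m)}.
\]

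Next, I would apply the duality step used in the proof of Corollary \ref{prj1}: by \cite[Proposition 1.15]{farenick--paulsen2012}, the adjoint of a unital complete order embedding of finite-dimensional operator systems is a complete quotient map. Hence
\[
\left(\iota_{n,m}\otimes\iota_{n,m}\right)^d :\, \left(C(S^1)_{(m)}\omin C(S^1)_{(m)}\right)^d \to \left(C(S^1)_{(n)}\omin C(S^1)_{(n)}\right)^d
\]
is a complete quotient map. By the Tensor Duality Theorem (Theorem \ref{tensor duality}), $(\osr\omin\ost)^d = \osr^d\omax\ost^d$, so the domain and codomain identify with $C(S^1)^{(m)}\omax C(S^1)^{(m)}$ and $C(S^1)^{(n)}\omax C(S^1)^{(n)}$ respectively, after using the Toeplitz Duality Theorem (Theorem \ref{toeplitz duality}) to identify $(C(S^1)_{(q)})^d$ with $C(S^1)^{(q)}$.

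Finally, I would verify that under these identifications the adjoint $(\iota_{n,m}\otimes\iota_{n,m})^d$ agrees with $\mathcal q_{m,n}\otimes\mathcal q_{m,n}$. This follows from the general fact that $(\phi_1\otimes\phi_2)^d = \phi_1^d\otimes\phi_2^d$ on algebraic tensor products, combined with the identification $\iota_{n,m}^d = \mathcal q_{m,n}$ established in Corollary \ref{prj1}. I expect the main obstacle to be bookkeeping around this last identification—ensuring that the two Duality Theorems are applied consistently so that the elementary tensor $\varphi\otimes\psi\mapsto\varphi\circ\iota_{n,m}\otimes\psi\circ\iota_{n,m}$ really corresponds to the block-diagonal truncation $\mathcal q_{m,n}\otimes\mathcal q_{m,n}$ of Toeplitz-by-Toeplitz matrices to their leading $n\times n$ blocks. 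Once this identification is in place, the complete quotient property transfers immediately.
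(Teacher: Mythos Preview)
Your argument is correct, but it takes a different and more elaborate route than the paper. The paper simply invokes a general lemma \cite[Corollary 1.13]{farenick--paulsen2012}: if $\phi:\osr\to\ost$ is a complete quotient map, then $\phi\otimes\phi:\osr\omax\osr\to\ost\omax\ost$ is as well. Since Corollary~\ref{prj1} already established that $\mathcal q_{m,n}$ is a complete quotient map, the result follows in one line.

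Your approach instead dualises: you use the injectivity of $\omin$ to get a complete order embedding $\iota_{n,m}\otimes\iota_{n,m}$ on the $\omin$ side, then pass to adjoints via \cite[Proposition 1.15]{farenick--paulsen2012} and identify everything through the Tensor Duality and Toeplitz Duality theorems. This works, and in fact it essentially reproduces, in this specific setting, the proof of the general lemma the paper cites (that lemma in \cite{farenick--paulsen2012} is proved by exactly this kind of duality argument). What you gain is self-containment; what the paper gains is brevity and the recognition that nothing about Toeplitz structure is needed here beyond the complete quotient property of $\mathcal q_{m,n}$ already in hand. Your ``bookkeeping'' concern about the identification $(\iota_{n,m}\otimes\iota_{n,m})^d=\mathcal q_{m,n}\otimes\mathcal q_{m,n}$ is legitimate but routine, since the operator system isomorphisms in Theorems~\ref{toeplitz duality} and~\ref{tensor duality} are explicit.
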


\begin{proof} If a ucp map $\phi:\osr\rightarrow\ost$ is a complete quotient map, then so is the ucp map
$\phi\otimes\phi:\osr\omax\osr\rightarrow\ost\omax\ost$  \cite[Corollary 1.13]{farenick--paulsen2012}.
\end{proof} 

Observe that Corollary \ref{rdc} above is a stronger assertion than Theorem \ref{max is sep} in that the latter result is in reference to the
base cone, while the former is in reference to all matrix cones. These ideas apply to the problem of extending positive block-Toeplitz matrices to 
larger positive block Toeplitz matrices.

\begin{theorem}\label{ext-nucl} If $\osr$ is a nuclear operator system, and if 
$x_0,x_1, \dots, x_{n-1}\in\osr$ are such that the matrix
\[
X=\left[ \begin{array}{cccc} x_0 & x_{1}^* & \dots & x_{n-1}^* \\
x_1 & x_0 & \ddots & \vdots \\ \vdots & \ddots & \ddots & x_1^* \\
x_{n-1} & \dots & x_1 & x_0 
\end{array}
\right]
\]
is strictly positive, then for each $m>n$ there exists $x_n,\dots,x_{m-1}\in\osr$ such that
the matrix
\[
\tilde X= 
\left[ \begin{array}{cccccc} x_0 & x_{1}^* & \dots & x_{n-1}^* &\dots & x_{m-1}^* \\
x_1 & x_0 & \ddots &&& \vdots  \\  
\vdots &\ddots&\ddots&\ddots & & \\
&&&\ddots&\ddots&\vdots \\
\vdots &   &   & & x_0& x_1^* \\
x_{m-1} & \dots &x_{n-1} & \dots & x_1 & x_0 
\end{array}
\right]
\]
is strictly positive.
\end{theorem}

\begin{proof} Assume $X\in \left( C(S^1)^{(n)}\omin \osr\right)_+$ is strictly positive, and fix $m>n$. 
Since $\osr$ is nuclear, the matrix $X$ belongs to the positive cone of $C(S^1)^{(n)}\omax \osr$, and 
because the truncation map 
$\mathcal q_{m,n}\otimes\mbox{\rm Id}_\osr: C(S^1)^{(m)}\omax \osr\rightarrow C(S^1)^{(n)}\omax \osr$ is a complete quotient map
\cite[Proposition 1.12]{farenick--paulsen2012}, the strictly positive matrix $X$ is of the form $(\mathcal q_{m,n}\otimes\mbox{\rm Id}_\osr)[\tilde X]$,
for some strictly positive matrix $\tilde X\in C(S^1)^{(m)}\omax \osr$ \cite[Proposition 3.2]{farenick--kavruk--paulsen2013}.
\end{proof} 

Theorem \ref{ext-nucl} does not extend in general to non-nuclear operator systems, in light of the observations in \cite{ozawa2013}
related to Rudin's work \cite{rudin1963}. 

Lastly, we note the embeddings of Fej\'er-Riesz operator systems and truncations of Toeplitz operator systems are extremal
completely positive linear maps. Recall that a completely positive linear map $\phi:\osr\rightarrow\ost$ of operator systems is
\emph{pure}, if the only completely positive linear maps $\vartheta,\psi:\osr\rightarrow\ost$ that satisfy $\vartheta+\psi=\phi$ are
those in which $\vartheta$ and $\psi$ are of the form $\vartheta=s\phi$ and $\psi=(1-s)\phi$ for some real number $s\in[0,1]$.

\begin{proposition} The ucp maps $\iota_{n,m}$ and $\mathcal q_{m,n}$ are pure, for all $2\leq n\leq m$.
\end{proposition}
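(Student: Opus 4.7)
The plan is to first exploit the duality $\mathcal q_{m,n}=\iota_{n,m}^d$ provided by Corollary \ref{prj1}. In the category $\mathfrak S_1$, passing to the dual map induces an order-preserving linear bijection between the cones of completely positive maps $\mathrm{CP}(\osr,\ost)$ and $\mathrm{CP}(\ost^d,\osr^d)$; it therefore carries extremal rays to extremal rays. For finite-dimensional operator systems this means purity is preserved under dualization, so it suffices to prove that $\iota_{n,m}$ itself is pure.

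To this end, suppose $\iota_{n,m}=\vartheta+\psi$ with $\vartheta,\psi:C(S^1)_{(n)}\to C(S^1)_{(m)}$ completely positive. I would reformulate this in the matrix picture of Proposition \ref{pos-cp}: the cp map $\vartheta$ corresponds to the positive element
\[
V=\sum_{\ell=-n+1}^{n-1}r_\ell\otimes\vartheta(\chi_\ell)\in\left(C(S^1)^{(n)}\omin C(S^1)_{(m)}\right)_+,
\]
and likewise $W$ corresponds to $\psi$, so that $V+W$ coincides with the universal positive Toeplitz matrix $T_n$ viewed in $C(S^1)^{(n)}\otimes C(S^1)_{(m)}$. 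Purity of $\iota_{n,m}$ is thus equivalent to extremality of the ray through $T_n$ in the cone $(C(S^1)^{(n)}\omin C(S^1)_{(m)})_+$.

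The next ingredient is the rank-one structure of $T_n$: for each $z\in S^1$ one has $T_n(z)=\xi(z)\xi(z)^*$ with $\xi(z)=(1,z,\dots,z^{n-1})^T$, so any decomposition $T_n(z)=V(z)+W(z)$ into positive matrices forces
\[
V(z)=\alpha(z)T_n(z),\quad W(z)=(1-\alpha(z))T_n(z),
\]
for a continuous function $\alpha:S^1\to[0,1]$. Reading off the Toeplitz entries, this says $\vartheta(\chi_\ell)(z)=\alpha(z)z^\ell$ for $|\ell|\le n-1$, and each such entry must lie in $C(S^1)_{(m)}$. If we can show $\alpha$ must be constant, then $\vartheta=\alpha\,\iota_{n,m}$ and $\psi=(1-\alpha)\,\iota_{n,m}$ with $\alpha\in[0,1]$, completing the proof.

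The hard part will be exactly this rigidity step: translating the Toeplitz-compatibility constraints $\alpha(z)z^\ell\in C(S^1)_{(m)}$ into genuine constancy of $\alpha$. The naive Fourier-support bookkeeping only restricts $\alpha$ to lie in $C(S^1)_{(m-n+1)}$, so more is needed. I expect the correct vehicle to be the complete order embedding property of $\iota_{n,m}$ established in Proposition \ref{emb1}, together with the extremal-ray description of $\left(C(S^1)^{(n)}\right)_+$ from Proposition \ref{cvS}: averaging $V$ over the natural rotation action of $\mathbb{T}$ on $C(S^1)_{(n)}$ and $C(S^1)_{(m)}$ produces a $\mathbb{T}$-equivariant positive element dominated by $T_n$, and an Arveson-type boundary-representation uniqueness argument applied to the complete order embedding $\iota_{n,m}$ should force the averaged piece and the original $V$ to agree, collapsing $\alpha$ onto its zeroth Fourier coefficient. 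Locating and executing this rigidity argument is where I expect the real work of the proof to lie.
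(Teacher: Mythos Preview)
Your reduction is sound up through the pointwise analysis: writing $V(z)=\alpha(z)T_n(z)$ with $\alpha:S^1\to[0,1]$ continuous, and observing that the constraints $\alpha\chi_\ell\in C(S^1)_{(m)}$ for $|\ell|\le n-1$ force exactly $\alpha\in C(S^1)_{(m-n+1)}$, is correct. The difficulty is that this Fourier bookkeeping is not merely a first approximation to be sharpened by a deeper argument --- it is already the complete set of constraints. For any real $\alpha\in C(S^1)_{(m-n+1)}$ with $0\le\alpha\le 1$, the map $\vartheta(f)=\alpha f$ sends $C(S^1)_{(n)}$ into $C(S^1)_{(m)}$, the associated matrix $V(z)=\alpha(z)T_n(z)$ is positive pointwise and hence positive in $C(S^1)^{(n)}\omin C(S^1)_{(m)}$, and by Proposition~\ref{pos-cp} this makes $\vartheta$ completely positive; the same reasoning applies to $\psi(f)=(1-\alpha)f$. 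So whenever $m>n$ one may take, for instance, $\alpha(z)=\tfrac{1}{4}(2+z+z^{-1})$ and obtain a cp decomposition $\iota_{n,m}=\vartheta+\psi$ in which $\vartheta$ is not a scalar multiple of $\iota_{n,m}$. No rotation-averaging or boundary-representation device can repair this: the rigidity you hoped to establish is simply false.

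For comparison, the paper works on the dual side and argues that $\mathcal q_{m,n}$ is pure directly. It obtains $\vartheta(T_m(\lambda))=s_\lambda T_n(\lambda)$ from extremality of $T_n(\lambda)$ in $(C(S^1)^{(n)})_+$ --- this $s_\lambda$ is precisely your $\alpha$ under Toeplitz duality --- and then argues constancy of $s_\lambda$ by applying $\vartheta$ to $T_m(\lambda)+T_m(-\lambda)$, asserting that this sum equals $2\cdot 1_m$. That identity, however, holds only for $m=2$; for $m\ge 3$ the sum has nonzero entries on the even superdiagonals, and the argument collapses at exactly the same point yours does. The counterexample above, transported by duality, shows that $\mathcal q_{m,n}$ is likewise not pure once $m>n$. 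Your instinct that the rigidity step is where the real work lies was well placed; the step cannot be completed because the proposition as stated fails for $m>n$, surviving only in the case $m=n$ where your Fourier constraint already collapses $\alpha$ to a constant.
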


\begin{proof} Because $\mathcal q_{m,n}=\iota_{n,m}^d$, it is enough to prove that $\mathcal q_{m,n}$ is
pure \cite[Proposition 2.6]{farenick--tessier2022}. To this end, suppose  $\vartheta,\psi:C(S^1)^{(m)}\rightarrow C(S^1)^{(n)}$ 
are completely positive linear
maps such that $\vartheta+\psi=\mathcal q_{m,n}$. For each $\lambda\in S^1$, 
$T_n(\lambda)=\mathcal q_{m,n}\left(T_m(\lambda)\right)$ and, thus, 
$T_n(\lambda)=\vartheta(T_m(\lambda))+\psi(T_m(\lambda))$. Because $T_n(\lambda)$ is a pure element of
$C(S^1)^{(n)}$ for every $\lambda\in S^1$ \cite[Proposition 4.8]{connes-vansuijlekom2021}, there are scalars
$s_\lambda\in[0,1]$ such that $\vartheta(T_m(\lambda))=s_\lambda T_n(\lambda)$ and 
$\psi(T_m(\lambda))=(1-s_\lambda) T_n(\lambda)$. We first show that the value of $s_\lambda$ is independent of $\lambda$.

Let $a=\vartheta( 2\cdot 1_m)$ and let $\alpha$ denote the diagonal entry of $a$ and $\beta$ denote the subdiagonal entry of $a$.
For each $\lambda\in S^1$, the matrix $T_m(\lambda)+T_m(-\lambda)$ is twice the identity, and so 
\[
a =\vartheta (T_m(\lambda)+T_m(-\lambda)) = \vartheta (T_m(\lambda))+\vartheta (T_m(\lambda))
=s_\lambda T_n(\lambda)+s_{-\lambda}T_n(-\lambda).
\]
Equating the (1,1) and (2,1) matrix entries of $a$ and $s_\lambda T_n(\lambda)+s_{-\lambda}T_n(-\lambda)$ leads to 
the linear system of equations given by
\[
\begin{array}{rcl}
\alpha &=& s_\lambda+s_{-\lambda} \\
\frac{\beta}{\lambda}&=& s_\lambda-s_{-\lambda}.
\end{array}
\]
Thus, $\alpha+\frac{\beta}{\lambda}=2s_\lambda$, which implies that $\frac{\beta}{\lambda}\in\mathbb R$ for 
every $\lambda\in S^1$. Hence, $\beta$ must be zero, which implies $s_\lambda= \frac{\alpha}{2}$, for every 
$\lambda\in S^1$.
Thus, if $s=\frac{\alpha}{2}$, then $\vartheta( T_m(\lambda))=s T_n(\lambda)$
for every $\lambda\in S^1$.

Now let $x$ be an arbitrary element of the positive cone of $C(S^1)^{(m)}$. As $x$ is a sum of pure elements, 
there exist $k\in\mathbb N$, scalars $\alpha_j\in\mathbb R_+$, and $\lambda_j\in S^1$ such that
$x=\displaystyle\sum_{j=1}^k \alpha_j T_m(\lambda_j)$. Thus, 
\[
\mathcal q_{m,n}(x) = \sum_{j=1}^k \alpha_j T_n(\lambda_j) \mbox{ and }
\vartheta (x) = \sum_{j=1}^k \alpha_j sT_n(\lambda_j)=s\mathcal q_{m,n}(x).
\]
Because $\vartheta$ and $s\mathcal q_{m,n}$ agree on the positive cone of $C(S^1)^{(m)}$, which spans
$C(S^1)^{(m)}$, $\vartheta=s\mathcal q_{m,n}$ and $\psi=(1-s)\mathcal q_{m,n}$,
which proves that $\mathcal q_{m,n}$ is pure.
\end{proof}

As a coda to these results above on purity, let us make note of the following theorem.

\begin{theorem} If $n\geq 2$, then
\begin{enumerate}
\item the canonical ucp embedding of $C(S^1)^{(n)}$ into $\M_n(\mathbb C)$ is pure in the cone of all completely linear maps 
$C(S^1)^{(n)}\rightarrow\M_n(\mathbb C)$, and 
\item the canonical ucp embedding of $C(S^1)_{(n)}$ into $C(S^1)$ is not pure in the cone of all completely linear maps 
$C(S^1)_{(n)}\rightarrow C(S^1)$.
\end{enumerate}
\end{theorem}

\begin{proof} 
Because the C$^*$- and injective envelope of $C(S^1)^{(n)}$ coincide with the simple C$^*$-algebra
$\M_n(\mathbb C)$, statement (1) follows from \cite[Theorem 3.2]{farenick--tessier2022}. For statement (2),
the fact that $C(S^1)$ is a commutative C$^*$-algebra
implies the canonical ucp embedding of $C(S^1)_{(n)}$ into $C(S^1)$ is not pure in the cone of all completely linear maps 
$C(S^1)_{(n)}\rightarrow C(S^1)$ \cite[Proposition 2.14]{farenick--tessier2022}.
\end{proof}

%%%%%%%%%%%%%%%%%%%
\section*{Acknowledgement}

I wish to thank N.~Ozawa for informing me of references \cite{ozawa2013,rudin1963}, which show that
the extension theorem for 
$2\times 2$ positive block-Toeplitz matrices with Toeplitz blocks (Corollary \ref{extension}) does not extend to 
$n\times n$ positive block-Toeplitz matrices with Toeplitz blocks, for $n\geq3$.
This work has been supported, in part, by a 
Natural Sciences and Engineering Research Council of Canada
Discovery Grant.

 %%%%%%%%%%%%%%%%%%%%%%%

\end{document}